\newtheorem{theorem}{Theorem}[section]
\newtheorem{lemma}[theorem]{Lemma}
\newtheorem{corollary}[theorem]{Corollary}
\newtheorem{proposition}[theorem]{Proposition}
\numberwithin{equation}{section}
\theoremstyle{definition}
\newtheorem{definition}[theorem]{Definition}
\newtheorem{remark}[theorem]{Remark}
\def\Cset{\mathbb{C}}
 \def\Rset{\mathbb{R}}
 \def\Zset{\mathbb{Z}}
\def\leq{\leqslant }
\def\geq{\geqslant}
\begin{document}

\title[Regularity of $SL(2,\Rset)$-invariant measures on moduli spaces]{$SL(2,\Rset)$-invariant probability measures on the moduli spaces of translation surfaces are regular}

\author[A. Avila, C. Matheus and J.-C. Yoccoz]{Artur Avila, Carlos Matheus and Jean-Christophe Yoccoz}

\address{
CNRS UMR 7586, Institut de Math\'ematiques de Jussieu - Paris Rive Gauche,
B\^atiment Sophie Germain, Case 7012, 75205 Paris Cedex 13, France
\&
IMPA, Estrada Dona Castorina 110, 22460-320, Rio de Janeiro, Brazil
}

\email{artur@math.jussieu.fr}

\address{
Universit\'e Paris 13, Sorbonne Paris Cit\'e, CNRS (UMR 7539),
F-93430, Villetaneuse, France.
}

\email{matheus.cmss@gmail.com}

\address{
Coll\`ege de France (PSL), 3, Rue d'Ulm, 75005 Paris, France
}

\email{jean-c.yoccoz@college-de-france.fr}

\date{June 29, 2013}

\begin{abstract}
In the moduli space $\mathcal H_g$ of normalized translation surfaces of genus $g$, consider, for a small parameter $\rho >0$, those translation surfaces which have two non-parallel saddle-connections of length  $\leq \rho$. We prove that this subset of $\mathcal H_g$ has measure $o(\rho^2)$ w.r.t. any probability measure on $\mathcal H_g$ which is invariant under the natural action of $SL(2,\Rset)$. This implies that any such probability measure is regular, a property which is important in relation with the recent fundamental work of Eskin-Kontsevich-Zorich on the Lyapunov exponents of the KZ-cocycle.

\end{abstract}
\maketitle

\tableofcontents

\section{Introduction}

\subsection{Regular $SL(2,\Rset)$-invariant probability measures on $\mathcal H_g$}\label{regular}

The study of interval exchange transformations and translation flows on translation surfaces 
(such as billiards in rational polygons) was substantially advanced in the last 30 years because of its intimate relationship with the moduli spaces $\mathcal{H}_g$ of normalized (unit area) Abelian differentials on compact Riemann surfaces of genus $g\geq 1$. In fact, the moduli spaces of normalized Abelian differentials admit a natural action of $SL(2,\mathbb{R})$ and this action works as a renormalization dynamics for interval exchange transformations and translation flows. In particular, this fundamental observation was successfully applied by various authors to derive several remarkable results: 
\begin{itemize}
\item in the seminal works of H. Masur \cite{Masur1} and W. Veech \cite{Veech1} in the early 80's, the recurrence properties of the orbits of the diagonal subgroup $g_t=\textrm{diag}(e^t, e^{-t})$ of $SL(2,\mathbb{R})$ on $\mathcal{H}_g$ were a key tool in the solution of Keane's conjecture on the unique ergodicity of typical interval exchange transformations; 
\item after the works of A. Zorich \cite{Zorich1}, \cite{Zorich2} in the late 90's and G. Forni \cite{Forni} in 2001, we know that the Lyapunov exponents of the action of the diagonal subgroup $g_t=\textrm{diag}(e^t, e^{-t})$ of $SL(2,\mathbb{R})$ on $\mathcal{H}_g$ drive the deviations of ergodic averages of typical interval exchange transformations and translation flows;
\item more recently, V. Delecroix, P. Hubert and S. Leli\`evre \cite{DHL} confirmed a conjecture of the physicists J. Hardy and J. Weber on the abnormal rate of diffusion of typical trajectories in typical realizations of the Ehrenfest wind-tree model of Lorenz gases by relating this question to certain Lyapunov exponents of the diagonal subgroup $g_t=\textrm{diag}(e^t, e^{-t})$ of $SL(2,\mathbb{R})$ on $\mathcal{H}_5$.
\end{itemize}

It is worth to point out that the applications in the last two items above involved Lyapunov exponents of the diagonal subgroup $g_t=\textrm{diag}(e^t, e^{-t})$ of $SL(2,\mathbb{R})$ on the moduli spaces of normalized Abelian differentials, and this partly explains the recent literature dedicated to this subject (e.g., \cite{AV}, \cite{BM}, \cite{EKZ11}, \cite{Forni11} and \cite{MMY}). 

In this direction, the most striking recent result is arguably the formula of A. Eskin, M. Kontsevich and A. Zorich \cite{EKZ} relating sums of Lyapunov exponents to certain geometrical quantities called Siegel-Veech constants. Very roughly speaking, the derivation of this formula can be described as follows (see also the excellent  Bourbaki seminar \cite{GH} by Grivaux-Hubert  on this subject). Given $m$ an ergodic $SL(2,\mathbb{R})$-invariant probability measure on $\mathcal{H}_g$, a remarkable formula of M. Kontsevich \cite{K} and G. Forni \cite{Forni} allows to express the sum of the top $g$ Lyapunov exponents of $m$ in terms of the integral of the curvature of the determinant line bundle of the Hodge bundle over the support of $m$. The formula of Kontsevich and Forni suffices for some particular examples of 
$m$, but, in general, its range of applicability is limited because it is not easy to compute the integral of the curvature of the determinant line bundle of the Hodge bundle. For this reason, A. Eskin, M. Kontsevich and A. Zorich use an analytic version of the Riemann-Roch-Hirzebruch-Grothendieck theorem to express the integral of the curvature of the determinant of the Hodge bundle as the sum of a simple combinatorial term $\frac{1}{12}\sum\limits_{i=1}^{\sigma}\frac{k_i(k_i+2)}{k_i+1}$ depending only on the orders $k_1,\dots,k_\sigma$ of the zeroes of the Abelian differentials in the support of $m$ and a certain integral expression $I$ (that we will not define explicitly here) depending on the flat geometry of the Abelian differentials in the support of $m$. At this point, A. Eskin, M. Kontsevich and A. Zorich complete the derivation of their formula with an integration by parts argument in order to relate this integral expression $I$ mentioned above to Siegel-Veech constants associated to the problem of counting maximal (flat) cylinders of the Abelian differentials in the support of $m$.

Concerning the hypothesis for the validity of the formula of A. Eskin, M. Kontsevich and A. Zorich, as the authors point out in their article \cite{EKZ}, most of their arguments use only the $SL(2,\mathbb{R})$-invariance of the ergodic probability measure $m$. Indeed, the sole place where they need an extra assumption on $m$ is precisely in Section 9 of \cite{EKZ} for the justification of the integration by parts argument mentioned above (relating a certain integral expression $I$ to Siegel-Veech constants). 

Concretely, this extra assumption is called \emph{regularity} in \cite{EKZ} and it is defined as follows (see Subsections 1.5 and 1.6 of \cite{EKZ}). Given a normalized Abelian differential $\omega$ on a Riemann surface $M$ of genus $g\geq 1$, we think of $(M,\omega)\in\mathcal{H}_g$ as a translation surface, that is, we consider the translation (flat) structure induced by the atlas consisting of local charts obtained by taking local primitives of $\omega$ outside its zeroes. Recall that a (maximal flat) cylinder 
$C$ of $(M,\omega)$ is a maximal collection of closed parallel regular geodesics of the translation surface $(M,\omega)$. The modulus $\textrm{mod}(C)$ of a cylinder $C$ is the quotient $\textrm{mod}(C)=h(C)/w(C)$ of the height $h(C)$ of $C$ (flat distance across $C$) by the length $w(C)$ of the waist curve of $C$. In this language, we say that an ergodic $SL(2,\mathbb{R})$-invariant probability measure $m$ on $\mathcal{H}_g$ is \emph{regular} if there exists a constant $K>0$ such that 

$$\lim\limits_{\rho \to 0}\frac{m(\mathcal{H}_g(K,\rho))}{\rho^2}=0, \quad \textrm{i.e.}, \quad m(\mathcal{H}_g(K,\rho))=o(\rho^2),$$
where $\mathcal{H}_g(K,\rho)$ is the set consisting of Abelian differentials $(M,\omega)\in\mathcal{H}_g$ possessing two non-parallel cylinders $C_1$ and $C_2$ with $\textrm{mod}(C_i)\geq K$ and $w(C_i)\leq\rho$ for $i=1, 2$.

\begin{remark}\label{Siegel-Veech}
 A {\it saddle-connection} on a translation surface $(M,\omega)\in\mathcal{C}$ is a geodesic segment joining zeroes of $\omega$ without zeroes of $\omega$ in its interior.  The {\it systole} of $M$, denoted by ${\rm sys}(M)$, is the length of the shortest  saddle-connection on $M$. 
 
 Each boundary component of a  cylinder consists of unions of saddle-connections, so that $\textrm{sys}(M)\leq w(C)$ for any cylinder $C$ of  $(M,\omega)$.

It follows from the Siegel-Veech formula (cf. \cite{Veech2}, Theorem 2.2 in \cite{EM}, and Lemma 9.1 in \cite{EKZ}) that we have, for small $\rho >0$
$$ m(\{ {\rm sys}(M) \leq  \rho\}) = O(\rho^2).$$
A fortiori, those surfaces $(M,\omega) \in \mathcal H_g$  with a  cylinder $C$ satisfying $w(C) \leq  \rho$ form a set of $m$-measure $ O(\rho^2)$.
\end{remark}

\bigskip

As it is mentioned by A. Eskin, M. Kontsevich and A. Zorich right after Definition 1 in \cite{EKZ}, all \emph{known} examples of ergodic $SL(2,\mathbb{R})$-invariant probabilities $m$ are regular: for instance, the regularity of Masur-Veech (canonical) measures was shown in Theorem 10.3 in \cite{MS} and Lemma 7.1 in \cite{EMZ} (see also \cite{N} for recent related results). In particular, this led A. Eskin, M. Kontsevich and A. Zorich to conjecture (also right after Definition 1 in \cite{EKZ}) that \emph{all} ergodic $SL(2,\mathbb{R})$-invariant probabilities $m$ on moduli spaces $\mathcal{H}_g$ of normalized Abelian differentials are regular.

\subsection{The result} \label{result}

In this paper, we confirm this conjecture by showing the following slightly stronger result.  For $\rho >0$, we denote by $\mathcal H_{g,(2)}(\rho)$ the set of $M \in \mathcal H_g$ which have at least two non-parallel saddle-connections of length $\leq \rho$. 

\begin{theorem}\label{t.AMYfaible}
Let $m$ be a $SL(2,\Rset)$- invariant probability measure on $\mathcal H_g$. For small $\rho >0$, the measure of   $\mathcal H_{g,(2)}(\rho)$  satisfies
$$ m( \mathcal H_{g,(2)}(\rho)) = o(\rho^2).$$
\end{theorem}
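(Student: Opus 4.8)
The plan is to reduce the statement about two short non-parallel saddle-connections to a statement about the behaviour of $m$ near the boundary of $\mathcal{H}_g$, and then exploit the $SL(2,\Rset)$-invariance — in particular the action of the upper-triangular subgroup — to gain the extra factor that upgrades the expected $O(\rho^2)$ bound from the Siegel-Veech formula to $o(\rho^2)$. First I would stratify: by Remark \ref{Siegel-Veech} the set where $\mathrm{sys}(M) \leq \rho$ already has measure $O(\rho^2)$, so the content of the theorem is that the \emph{second} short saddle-connection, required to be non-parallel to the first, costs an additional vanishing factor. Concretely, I would cover $\mathcal{H}_{g,(2)}(\rho)$ by sets of surfaces having one saddle-connection of length in a dyadic range $\sim 2^{-j}\rho$ and a non-parallel one of length $\leq \rho$, and estimate each piece.

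The heart of the argument should be a quantitative, measure-theoretic version of the following geometric fact: if a surface has a very short saddle-connection $\gamma_1$, then applying a suitable element $g \in SL(2,\Rset)$ that expands in the direction of $\gamma_1$ (a rotation to make $\gamma_1$ vertical followed by $g_t$ with $e^{-t}\sim$ length of $\gamma_1$) brings the surface back into a compact part of $\mathcal{H}_g$ as far as $\gamma_1$ is concerned, but simultaneously \emph{shrinks} the horizontal component of any transverse saddle-connection $\gamma_2$. Since $g$ preserves $m$, one can transport the measure estimate for "$\gamma_2$ short" at the new point (which, being in a controlled region, is governed by a Siegel-Veech-type bound) back to the original region. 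The key step is therefore to set up this change of variables carefully, keeping track of Jacobians and of the range of the parameter $t$, and to invoke the Siegel-Veech formula (as in Remark \ref{Siegel-Veech}) not just once but in the fibered form needed to control both saddle-connections at once — essentially a Siegel-Veech bound for pairs of non-parallel short saddle-connections, with an explicit dependence on the length of the shorter one.

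I expect the main obstacle to be exactly this last point: promoting the $O(\rho^2)$ Siegel-Veech bound to the $o(\rho^2)$ bound in the presence of the non-parallelism constraint, uniformly over all $SL(2,\Rset)$-invariant $m$ (not just Masur-Veech measures, where one has explicit formulas). The difficulty is that the naive bound "(short $\gamma_1$) costs $O(\rho^2)$" and "(short $\gamma_2$) costs $O(\rho^2)$" multiply to $O(\rho^4)$ only if the two events were independent, which they are not, and near the codimension-one boundary the two constraints interact. The way around this, I anticipate, is to use the $SL(2,\Rset)$-action to effectively integrate out one of the two saddle-connections: after renormalizing along $\gamma_1$, the residual cost of $\gamma_2$ being short is $o(1)$ rather than $O(1)$, because the renormalized surface is forced into a set on which "having \emph{any} short saddle-connection transverse to the vertical" is a small-measure event whose measure tends to $0$. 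Making this "$o(1)$" explicit — quantifying it, say, by a bound of the form $O(\rho^2\,|\log\rho|^{-1})$ or by an argument that extracts vanishing from a suitable integrability/Fubini argument over the circle of directions — is where the real work lies, and it is presumably where the bulk of the paper is devoted.
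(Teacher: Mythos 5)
Your plan correctly identifies the driving intuition of the paper — use the $SL(2,\Rset)$-action to ``renormalize'' a surface with a short saddle-connection back to scale one, and argue that a second non-parallel short connection is a codimension-two event that costs an extra vanishing factor. But there is a genuine gap at the crux of the argument: the renormalizing element $g$ depends on the surface $M$ (through the direction and length of $\gamma_1$), so invariance of $m$ under fixed elements of $SL(2,\Rset)$ does not by itself let you ``transport the measure estimate for $\gamma_2$ short at the new point back to the original region.'' The map $M\mapsto g(M).M$ is not measure-preserving, and without additional structure the transport step you describe fails. The missing idea — which is the technical backbone of the paper — is to disintegrate $m$ along $SL(2,\Rset)$-orbits via Rokhlin's theorem and show that the conditional measures along orbits are Haar (Proposition~\ref{cond-meas}). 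This is what replaces your surface-dependent renormalization by an honest ``orbit-by-orbit'' computation: one builds a cross-sectional measure $m_0$ on the set $X_0^*$ of surfaces at systole level $\rho_0$ whose minimizing connections are all vertical (Proposition~\ref{measure-on-X}), shows that $F(\rho)=m(\{\mathrm{sys}\leq\rho\})$ has left-derivative $\pi\rho^{-1}m_0(X_0^*)$ and hence the asymptotic $F(\rho)\sim c(m)\rho^2$ (Corollaries~\ref{derivative-of-F}, \ref{abs-cont}), and then checks that the part of the cusp with a second non-parallel short connection is negligible.

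Your proposed mechanism for the $o(1)$ gain is also not quite what makes the paper work. You suggest a ``fibered Siegel-Veech bound for pairs of non-parallel short saddle-connections,'' but this would have to degenerate precisely as the two connections become parallel, and you give no way to control that degeneration. The paper instead gets the saving from a direct Haar-measure Jacobian estimate in $SL(2,\Rset)$: Proposition~\ref{changing-norms} shows that the set of $\gamma$ with $\|\gamma\|\leq 2$ mapping two non-parallel vectors $v,v'$ both into a $\tau$-shell around the unit circle has Haar measure $O(\tau^{3/2})$, uniformly as long as $\|v\pm v'\|$ is bounded below. Combined with the Eskin-Masur counting bound (Lemma~\ref{l.EM}) and a separate argument that ``thin-angle'' pairs contribute at most $\eta\tau$ (Lemma~\ref{thin-angle}), this gives that the singular part of each slice $\{\rho e^{-\tau}\leq \mathrm{sys}\leq \rho\}$ has measure $o(\tau)$ (Proposition~\ref{remaining-part}). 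So the $o(\rho^2)$ does not come from a logarithmic saving on a Siegel-Veech count; it comes from a sub-quadratic ($\tau^{3/2}$) Jacobian bound in the group, fed through the disintegration. Your sketch is an honest outline of the heuristic, but without the orbit-wise disintegration and the explicit $SL(2,\Rset)$ Jacobian estimate it does not constitute a proof.
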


The moduli space  $\mathcal{H}_g$ is the finite disjoint union of its strata 
$\mathcal{H}^{(1)}(k_1,\dots,k_{\sigma})$. Here,  $(k_1,\dots, k_{\sigma})$ runs amongst 
non-increasing sequences of positive integers with $\sum\limits_{i=1}^{\sigma}k_i=2g-2$, 
and $\mathcal{H}^{(1)}(k_1,\dots,k_{\sigma})$ consists of unit area translation surfaces 
$(M,\omega) \in \mathcal H_g$
such that the $1$-form $\omega$ has $\sigma$ zeroes of respective order $k_1,\dots,k_{\sigma}$.

Every stratum is invariant under the action of $SL(2,\Rset)$. 
Therefore, if $m$ is a $SL(2,\Rset)$- invariant probability measure on $\mathcal H_g$ as in the theorem,
we may consider its restriction to each  stratum of $\mathcal H_g$ (those who have positive measure) and deal separately with these restrictions.

It means that in the proof of the theorem, we may and will assume that $m$ is supported by some stratum $\mathcal{H}^{(1)}(k_1,\dots,k_{\sigma})$ of $\mathcal H_g$. Actually, we may even allow for some of the integers $k_i$ to be equal to zero, corresponding to marked points on the translation surface which are not zeroes of the $1$-form $\omega$.

A slightly more general form of our theorem is thus:

\begin{theorem}\label{t.AMY}
Let $g >0$, $\sigma >0$ and let $(k_1,\dots, k_{\sigma})$ be a non-increasing sequence of non-negative integers with $\sum\limits_{i=1}^{\sigma}k_i=2g-2$. Let $\mathcal H^{(1)} = \mathcal{H}^{(1)}(k_1,\dots,k_{\sigma})$ be the corresponding moduli space of unit area translation surfaces. For $\rho >0$, let $\mathcal H^{(1)}_{(2)} (\rho)$ be the subset consisting of translation surfaces in $\mathcal H^{(1)}$ which have at least two non-parallel saddle-connections of length $\leq \rho$. 

For any $SL(2,\Rset)$-invariant probability measure $m$ on  $\mathcal{H}^{(1)}$, one has 
$$ m( \mathcal H^{(1)}_{(2)}(\rho)) = o(\rho^2).$$

\end{theorem}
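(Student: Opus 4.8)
## Proof Strategy

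\textbf{Reduction to a counting/covering estimate via Siegel--Veech machinery.} The plan is to localize the problem to a small piece of the stratum and then exploit the $SL(2,\Rset)$-action to trade off one small saddle-connection against the other. As in Remark \ref{Siegel-Veech}, the Siegel--Veech formula already tells us that $m(\{\mathrm{sys}(M)\le\rho\}) = O(\rho^2)$; the point is to gain the extra factor $o(1)$ by using the \emph{second}, non-parallel saddle-connection. First I would decompose $\mathcal H^{(1)}_{(2)}(\rho)$ according to which pair of singularities (or which homology/homotopy type) each of the two short saddle-connections connects, reducing to finitely many cases. Then, for a surface $M$ with two non-parallel saddle-connections $\gamma_1,\gamma_2$ of length $\le\rho$, the holonomy vectors $v_1 = \mathrm{hol}(\gamma_1)$ and $v_2 = \mathrm{hol}(\gamma_2)$ are linearly independent vectors in $\Rset^2$ of norm $\le\rho$, so they span a parallelogram of area $\le\rho^2$ but this area can be much smaller; the quantity $|v_1\wedge v_2|$ is the key extra smallness we must capture.

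\textbf{Using the $SL(2,\Rset)$-action to renormalize.} The heart of the argument: apply a well-chosen element $g\in SL(2,\Rset)$ (a rotation followed by the diagonal flow $g_t$) to make $v_1$ short while $v_2$ becomes of order $1$, i.e. rescale so that exactly one of the two saddle-connections is "seen" as short after renormalization. Because $m$ is $SL(2,\Rset)$-invariant, integrating over the rotation subgroup $SO(2,\Rset)$ and over a bounded range of the diagonal flow converts a bound on the measure of $\{M : \exists\,\gamma_1,\gamma_2 \text{ non-parallel, } |v_i|\le\rho\}$ into a bound involving the conditional measure of surfaces with a single short saddle-connection, times the "angular/temporal budget" available — which is itself $O(\rho)$ per each of the two directions, but with a logarithmic or vanishing correction coming from the constraint that $v_1$ and $v_2$ must be simultaneously short and non-parallel. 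I would set up the Siegel--Veech transform $f_\rho(M) = \#\{\text{saddle-connections } \gamma : |\mathrm{hol}(\gamma)|\le\rho\}$, note $\int f_\rho\,dm = O(\rho^2)$, and then estimate $\int_{\mathcal H^{(1)}_{(2)}(\rho)} 1\,dm$ by a second-moment / pair-counting version: $\int \binom{f_\rho}{2}\,dm$ minus the "parallel" contributions. The non-parallel pair count has a Siegel--Veech-type integral representation whose leading term is $O(\rho^4)$ — but that over-counts; the correct bound $o(\rho^2)$ must come from observing that on the set $\mathcal H^{(1)}_{(2)}(\rho)$ the local picture forces $f_\rho \ge 2$ on a set whose measure, by a Chebyshev/averaging argument against the $SL(2,\Rset)$-orbit, is $o(\rho^2)$.

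\textbf{The dichotomy: either the systole is $\ll\rho$, or two short saddle-connections are nearly parallel.} I expect the cleanest route is a contradiction/dichotomy argument. Fix $M\in\mathcal H^{(1)}_{(2)}(\rho)$ with short non-parallel $\gamma_1,\gamma_2$. Either (i) $|v_1\wedge v_2| \le \va\rho^2$ for a small parameter $\va$, in which case $\gamma_1,\gamma_2$ are \emph{almost} parallel and some integer combination or a nearby saddle-connection has holonomy of length $\le C\va\rho/|v_i|\cdot\rho$, pushing the systole below $\va^{1/2}\rho$, whose measure is $O(\va\rho^2)$ by Siegel--Veech — summably small; or (ii) $|v_1\wedge v_2| \ge \va\rho^2$, genuinely non-degenerate, and then applying $g_t$ for $t$ up to $\tfrac12\log(1/\rho)$ in a suitable direction expands $v_1$ to length $\sim 1$ while $v_2$ stays of length $\le \rho^{1/2}$ say, so $M$ lies (after the $SL(2,\Rset)$ move) in $\{\mathrm{sys}\le\rho^{1/2}\}$, of measure $O(\rho)$; invariance of $m$ and Fubini over the $g_t$-window (of length $O(\log 1/\rho)$) then give $m$(case (ii))$= O(\rho^{1/2}\log(1/\rho)\cdot\text{(transverse slack)})$, which must be arranged to be $o(\rho^2)$ by choosing the exponents and the slack in the rotation variable carefully.

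\textbf{Main obstacle.} The delicate point is the bookkeeping in step (ii): one must ensure that the renormalization that turns "$\gamma_1,\gamma_2$ both short" into "$\gamma_2$ short" does not lose the crucial second power of $\rho$. Naively, $\{\mathrm{sys}\le\delta\}$ has measure $O(\delta^2)$, and we have a $g_t$-orbit window of length $O(\log(1/\rho))$ and a rotation window controlling the direction of $v_1$ of measure $O(\rho/\delta)$ or so; balancing $\delta$, the window lengths, and the requirement that \emph{both} become simultaneously short is exactly where the $o(\rho^2)$ (as opposed to $O(\rho^2)$) is forced — it comes from the fact that the set of directions making $v_1$ short \emph{shrinks} as $\rho\to 0$ relative to the $O(\rho^2)$ budget, i.e. the constant $K$ in the regularity definition is not needed because we are not near the boundary of a cylinder but genuinely near a codimension-two locus. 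Making this quantitative, and uniform over the finitely many combinatorial types and over all invariant $m$ (with no ergodicity assumption, hence via the ergodic decomposition, which is harmless here), is the real work.
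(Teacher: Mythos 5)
The high-level intuition in your proposal — using $SL(2,\Rset)$-invariance and the diagonal flow to trade two short saddle-connections for one — is in the spirit of the paper, but as written it has several genuine gaps and does not close.

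\smallskip

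\textbf{Case (i) is incorrect.} You assert that if $v_1\wedge v_2 \le \va\rho^2$ then ``some integer combination or a nearby saddle-connection has holonomy of length $\leq C\va\rho/|v_i|\cdot\rho$, pushing the systole below $\va^{1/2}\rho$.'' Holonomy vectors of saddle-connections of a translation surface do \emph{not} form a lattice, and an integer combination of two saddle-connections need not be a saddle-connection. Two nearly-parallel short saddle-connections do not, in general, produce a much shorter one. This reasoning is valid for the period lattice of a torus but fails for higher genus. So the ``nearly parallel'' alternative cannot be disposed of this way; the paper handles the nearly-parallel case quite differently (Lemma~\ref{thin-angle}), via disjointness of the images $g_{3j\tau}.S_1(\tau)$ inside a fixed systole band of positive measure.

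\smallskip

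\textbf{The second-moment approach is not available a priori.} You propose bounding $\int \binom{f_\rho}{2}\,dm$ and invoking Chebyshev. For the Masur--Veech measure one can control such pair-counting via Siegel--Veech-type integral formulas, but for an \emph{arbitrary} $SL(2,\Rset)$-invariant probability measure $m$ you do not know that $f_\rho^2$ is integrable, nor that the pair-count has a Siegel--Veech representation with the right dependence on $\rho$. The only input from that machinery the paper uses is the first-moment bound $m(\{\mathrm{sys}\le\rho\})=O(\rho^2)$ (Remark~\ref{Siegel-Veech}) and the Eskin--Masur uniform bound on the \emph{number} of short saddle-connections away from the cusp (Lemma~\ref{l.EM}); it never integrates a pair-count.

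\smallskip

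\textbf{Case (ii) is not actually carried out.} You acknowledge that the bookkeeping in the $g_t$-window — reconciling a window of length $O(\log(1/\rho))$, a rotation slack, and the target $o(\rho^2)$ — ``is the real work,'' and indeed the estimate $O(\rho^{1/2}\log(1/\rho))$ you write down is nowhere near $o(\rho^2)$. The paper's resolution is structurally different: rather than a direct Fubini over a long $g_t$-window, it fixes a level $\{\mathrm{sys}=\rho_0\}$, constructs a finite transverse measure $m_0$ on the set $X_0^*$ of surfaces there whose short saddle-connections are all vertical (Proposition~\ref{measure-on-X}), and then shows two matching estimates: the regular part of each slice $\{\rho_0\ge\mathrm{sys}\ge\rho_0 e^{-\tau}\}$, swept out by $R_\theta g_t R_{-\theta}(X_0^*)$, has measure $\pi m_0(X_0^*)\tau+o(\tau)$ (Propositions~\ref{regular-part} and~\ref{remaining-part}), while pushing $X_0^*$ deep via $g_t R_\theta$ with $t\in J(T,\theta)$ already fills the sublevel $\{\mathrm{sys}\le\rho_0 e^{-T}\}$ up to $o(e^{-2T})$ (Proposition~\ref{Estimate-of-F} together with Corollary~\ref{abs-cont}). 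The key technical step that replaces your case (ii) is Lemma~\ref{thick-angle}, whose input is the Haar-measure estimate of Proposition~\ref{changing-norms}: the set of $\gamma$ with $\|\gamma\|\le 2$ making two fixed non-parallel, angularly separated vectors both approximately unit-length has Haar measure $O(\tau^{3/2})$. That $\tau^{3/2}$, strictly smaller than $\tau$, is exactly where the gain over $O(\rho^2)$ comes from, and nothing in your sketch produces such an exponent. In short: the strategy of renormalizing along the $SL(2,\Rset)$-orbit is right, but the specific mechanisms you propose (lattice-style shortening, second moments, a direct Fubini over a long $g_t$-window) either fail or are not justified, and the decisive $\tau^{3/2}$ estimate and the slice-versus-sublevel accounting that drive the paper's proof are absent.
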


\bigskip

\subsection {Reduction to a particular case} \label{reduction} The moduli space $ \mathcal{H}^{(1)}(k_1,\dots,k_{\sigma})$ is only an orbifold, not a manifold, because some translation surfaces have non trivial automorphisms. We explain here how to bypass this (small) difficulty.

\bigskip

Denote by $\mathcal H = \mathcal{H}(k_1,\dots,k_{\sigma})$ the moduli space of translation surfaces with combinatorial data $(k_1,\dots,k_{\sigma})$ of any positive area, so that  $\mathcal H^{(1)} $ is the quotient of 
$\mathcal H$ by the action of homotheties of positive ratio.

The moduli space  $\mathcal{H}(k_1,\dots,k_{\sigma})$ is the quotient of the corresponding 
Teichm\"uller space $\mathcal{Q}(k_1,\dots,k_{\sigma})$ by the action of the mapping class group 
$MCG(g,\sigma)$. Here, a subset $\Sigma = \{ A_1,\ldots, A_{\sigma}\} $ of a (compact, connected, oriented) 
surface $M_0$ of genus $g$ is given. The Teichm\"uller space $\mathcal{Q}(k_1,\dots,k_{\sigma})$ is the 
set of of structures of translation surfaces on $M_0$ having a zero of order $k_i$ at $A_i$, 
up to homeomorphisms of $M_0$ which are isotopic to the identity ${\rm rel.} \, \Sigma$.
 The mapping class group $MCG(g,\sigma)$ is the group of isotopy classes ${\rm rel.} \, \Sigma$ 
 of homeomorphisms of $M_0$ preserving $\Sigma$.  
 \medskip
 
 The Teichm\"uller space $\mathcal{Q}(k_1,\dots,k_{\sigma})$ is a complex manifold of dimension 
 $2g + \sigma -1$ with a natural affine structure given by the period map
 \begin{eqnarray*}
  \Theta: \mathcal{Q}(k_1,\dots,k_{\sigma})& \longrightarrow & \rm {Hom}\,  (H_1(M_0,\Sigma,\Zset) , \Cset) = H^1(M_0,\Sigma,\Cset) \\
  \omega &\longmapsto & ( \gamma \longmapsto \int_{\gamma} \omega)
  \end{eqnarray*}
 which is a local homeomorphism.
 
 \medskip
 The mapping class group acts properly discontinuously on Teichm\"uller space. However, some points 
 in Teichm\"uller space may have non trivial stabilizer in $MCG(g,\sigma)$, leading to the orbifold structure for 
 the quotient space. The stabilizer of a structure of translation surface $\omega$ on $(M_0,\Sigma)$ 
 is nothing but the 
 finite group $\rm {Aut} (M_0,\omega)$ of automorphisms of this structure. The automorphism group acts freely
 on the set of vertical upwards separatrices of $\omega$. Therefore its order is bounded by the 
 number of such separatrices $\sum_i (k_i +1) = 2g -2 + \sigma$.
 
 \medskip
 
 Let $\omega_0$ be a structure of translation surface  on $(M_0,\Sigma)$; let $G$ be its automorphism group,
 viewed as a finite subgroup of $MCG(g,\sigma)$. For $\omega$ close to $\omega_0$ in Teichm\"uller space,
 the automorphism group $\rm {Aut} (M_0,\omega)$ is contained in $G$, because the action of the mapping class group is properly discontinuous. Moreover, for any $g \in G$, one has $g \in \rm {Aut} (M_0,\omega)$
 iff $\Theta(\omega)$ is a fixed point of $g$ (for the natural action of the mapping class group on 
 $H^1(M_0,\Sigma,\Cset) $). Thus, in a neighborhood of $\omega_0$, those $\omega$ which have the same 
 automorphism group than $\omega_0$ form an affine submanifold of Teichm\"uller space, and the image of this subset in the moduli space $\mathcal H$ is a manifold. The intersection of this image with $\mathcal H^{(1)}$ is also a manifold.
 
 \medskip
 
 For $i \geq 1$, denote by $\mathcal C^i$ the set of points of $ \mathcal{H}^{(1)}(k_1,\dots,k_{\sigma}) $  for which the automorphism group
 (defined up to conjugacy in the mapping class group) has order $i$. It follows from the previous discussion that 
 $\mathcal C^i$ is a manifold for all $i$, and is empty for $i > 2g-2+\sigma$. Moreover, each $\mathcal C^i$ is invariant under the action of $SL(2,\Rset)$.
 
 \medskip
 
  Let $m$ be a $SL(2,\Rset)$-invariant probability measure supported 
 on $\mathcal H^{(1)}$. To prove the property of Theorem \ref{t.AMY}, it is sufficient to prove it for the restriction of $m$
 to each of the $\mathcal C^i$ (those which have positive measure). Therefore we may and will assume below that $m$ is supported on some $\mathcal C^i$.

\subsection {Scheme of the proof  }\label{scheme}

The basic idea of the proof of Theorem \ref{t.AMY} is the following. We consider, in a level set $\{\textrm{sys}(M)=\rho_0\}$ of the systole function on $\mathcal C^i$,  the subset of translation surfaces whose shortest saddle-connections are all parallel. We will use the $SL(2,\mathbb{R})$-action to move relatively large parts of this subset to further deep sublevels  $\{\textrm{sys}(M)\leq \rho:= \rho_0\exp(-T)\}$ for $T\gg1$. The translation surfaces obtained in this way have the property that all saddle-connections not parallel to a minimizing one are much longer than the systole. A computation in appropriate pieces of  $SL(2,\mathbb{R})$-orbits, on which
the invariant probability  desintegrate as Haar measure, shows that most surfaces in the sublevel $\{\textrm{sys}(M)\leq \rho \}$ are obtained in this way. Thus we can conclude that the complement in $\mathcal C^i$, which contains the intersection  $\mathcal H_{g,(2)}(\rho) \cap \mathcal C^i$ has measure $o(\rho^2)$.

 \bigskip 

 The article is organized as follows. In  Section \ref{s.Rokhlin} , we review some material on 
 Rokhlin's desintegration theorem (\cite {R}, see also  \cite{Young}), which allows to consider separately 
 each orbit of the action of $SL(2,\Rset)$. The statement that we aim at (Proposition \ref{cond-meas}) 
 is  well-known to specialists, but we were not able to find a proper reference in the literature.
 
 \smallskip 
 
  In Section \ref{s.SL}, we discuss a couple of elementary facts about $SL(2,\mathbb{R})$ and its action on $\mathbb{R}^2$ (see Proposition \ref{decomp-grn}, Proposition \ref{changing-norms} and Proposition \ref{main-estimate}) lying at the heart of our ``orbit by orbit'' estimates. 
  
  \smallskip
  
 In Section \ref{s.conditionals}, we construct from the invariant probability $m$ a related  measure $m_0$
 which has finite total mass and  is supported on the subset $X_0^*$ of the level set 
 $\{\textrm{sys}(M)=\rho_0\}$ formed by surfaces on which all minimizing   saddle-connections are vertical.
 This measure $m_0$ enters in the formula (Corollary \ref{measure-estimate})  for the $m$-measure of 
 certain subsets of the sublevels $\{\textrm{sys}(M) \leq \rho_0 \exp(-T)\}$.
 
 \smallskip
 
  In Section \ref{s.slices}, we show that  the measure of slices of the form $\{\rho_0 \geq \textrm{sys}(M)\geq \rho_0 \exp(-\tau)\}$ when $\tau$ is small is related to the total mass of $m_0$ (Corollary \ref{derivative-of-F}).
  The slice is divided into a {\it regular part}, obtained by pushing $X_0^*$ into the slice through the action of $SL(2,\Rset)$, and a {\it singular part} , whose measure is much smaller (Proposition \ref {remaining-part}).
  
  \smallskip
  
  In Section \ref {s.tAMY}, we bring together the results of the previous sections to present the proof of the theorem.
  

\subsection {Notations}\label{notations}

We will assume some familiarity with the basic features of Abelian differentials, translation surfaces and their moduli spaces: we refer the reader to the surveys \cite{Zorich3} and \cite{Y-Pisa} for gentle introductions to the subject. We will use the following notations: 
\begin{itemize}
\item $\mathcal C$ denotes what was denoted by $\mathcal C^i$ above, namely the subset of a moduli space  $ \mathcal{H}^{(1)}(k_1,\dots,k_{\sigma}) $ formed by points whose stabilizer in the mapping class group has a given order.
\item $m$ is a $SL(2,\Rset)$-invariant probability measure supported on the manifold $\mathcal C$.
\item $(e_1,e_2)$ denotes the standard basis of $\Rset^2$ and $||\;\;||$ is the usual Euclidean norm on $\Rset^2$.
\item $R_{\theta}\in SO(2,\Rset)$ is the rotation of angle $\theta$.
\item $g_t$ is the diagonal matrix ${\rm diag} (e^t, e^{-t})$ in $SL(2,\Rset)$.
\item $n_u$ is the lower triangular matrix $ \left ( \begin{array}{cc}1&0 \\ u&1 \end{array} \right  )$.
\item $N_{a,b}$ is the upper triangular matrix   $ \left ( \begin{array}{cc}a&b \\ 0&a^{-1} \end{array} \right  )$.
\end{itemize}

 





\subsection {Two basic facts}\label{facts}

We use several times the following lemma, which is an immediate consequence of Fubini's theorem.

\begin{lemma}\label{basic}
 Let $(X,\mathcal B, m)$ be a probability space and let $G$ be a locally compact group acting measurably on $X$ by measure-preserving automorphisms, and let $\nu$ be some Borel probability measure on $G$. For any measurable subset $B \subset X$, one has 
 
 $$ m(B) = \int_X \nu(\{ g \in G , \; g.x \in B \} ) \; dm(x).$$
\end{lemma}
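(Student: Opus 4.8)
The plan is to express both sides of the identity as the $(\nu\times m)$-measure of a single subset of the product space $G\times X$, and then to invoke Fubini's theorem. Equip $G$ with its Borel $\sigma$-algebra, form the product probability space $(G\times X,\, \mathcal B_G\otimes\mathcal B,\, \nu\times m)$, and set
$$ E \;=\; \{\,(g,x)\in G\times X \,:\, g.x\in B\,\}, $$
which is the preimage of $B$ under the action map $(g,x)\mapsto g.x$; since the action is measurable, $E$ is a product-measurable subset of $G\times X$.

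First I would evaluate $(\nu\times m)(E)$ by integrating $\mathbf 1_E$ in the $x$ variable first. The slice of $E$ above a point $x\in X$ is exactly $\{\,g\in G:\, g.x\in B\,\}$, so Fubini's theorem (in its form for non-negative functions) yields both the measurability of $x\mapsto \nu(\{\,g\in G:\,g.x\in B\,\})$ and the identity $(\nu\times m)(E)=\int_X \nu(\{\,g\in G:\,g.x\in B\,\})\,dm(x)$, which is the right-hand side of the claim. Next I would integrate in the $g$ variable first: the slice of $E$ above $g\in G$ is $\{\,x\in X:\,g.x\in B\,\}=g^{-1}.B$, and since $g$ acts by a measure-preserving automorphism we have $m(g^{-1}.B)=m(B)$; hence $(\nu\times m)(E)=\int_G m(g^{-1}.B)\,d\nu(g)=\int_G m(B)\,d\nu(g)=m(B)$, using that $\nu$ is a probability measure. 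Comparing the two evaluations of $(\nu\times m)(E)$ gives the lemma.

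The only points requiring a word of care — and none of them is a genuine obstacle — are the measurability statements: that $E$ is product-measurable, which we read off from the hypothesis that the action is measurable (interpreted as joint measurability of $(g,x)\mapsto g.x$); that each slice of a product-measurable set is measurable in the corresponding factor; and that the partial integral $x\mapsto\nu(\{\,g\in G:\,g.x\in B\,\})$ is measurable, which is itself part of the conclusion of Fubini's theorem. With these in hand the computation above is purely formal, so that the lemma is indeed nothing more than Fubini's theorem applied to the indicator function $\mathbf 1_E$.
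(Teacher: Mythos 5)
Your argument is correct and is precisely the one the paper has in mind: the paper states the lemma as ``an immediate consequence of Fubini's theorem'' without spelling it out, and your proof is exactly the natural unpacking of that remark, applying Tonelli/Fubini to $\mathbf{1}_E$ with $E=\{(g,x):g.x\in B\}$ and using measure-preservation of the $G$-action together with $\nu(G)=1$ to evaluate the other iterated integral.
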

We will apply this taking for $\nu$ the normalized restriction of a Haar measure on $G$ to some compact subset.

\bigskip

We will use in the proof of Lemma \ref {thick-angle} the following fundamental fact on moduli space of translation surfaces (see \cite{EM}, Theorem 5.4  and also \cite{Masur2} for a much stronger result).

\begin{lemma}\label{l.EM}
For $\rho>0$, $R>0$, and any $(M,\omega) \in \mathcal{H}^{(1)}(k_1,\dots,k_{\sigma})$ with $\textrm{sys}(M)\geq\rho$, the number of holonomy vectors of saddle-connections of $(M,\omega)$ of norm $\leq R$ is bounded by a constant which depends only on $\rho$, $R$ and $k_1,\dots,k_{\sigma} $. 
\end{lemma}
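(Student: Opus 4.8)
The statement is precisely Theorem~5.4 of \cite{EM} (and \cite{Masur2} provides a much sharper asymptotic count, with good control of the dependence on the surface), so the quickest route is to quote it. To reprove it, the plan is to combine two ingredients. The first is a compactness statement: the \emph{thick part}
$$\mathcal{K}(\rho)\;:=\;\{\,(M,\omega)\in\mathcal{H}^{(1)}(k_1,\dots,k_\sigma)\;:\;\textrm{sys}(M)\geq\rho\,\}$$
is a compact subset of the stratum (Masur's compactness criterion; note that it depends only on $\rho$ and $k_1,\dots,k_\sigma$). The second is that the counting function
$$N_R\;:\;(M,\omega)\;\longmapsto\;\#\{\,v\in\Cset\;:\;v=\textrm{hol}(\gamma)\ \text{for some saddle-connection}\ \gamma\ \text{of}\ (M,\omega)\ \text{with}\ |v|\leq R\,\}$$
is \emph{locally bounded} on the stratum. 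Granting both, a locally bounded function is bounded on the compact set $\mathcal{K}(\rho)$, and the bound obtained involves only $\rho$, $R$ and $k_1,\dots,k_\sigma$, which is exactly the assertion.

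To prove local boundedness near a fixed $(M_0,\omega_0)$, I would work in period coordinates and use the associated nearly-isometric homeomorphism $\varphi\colon(M,\omega)\to(M_0,\omega_0)$, which distorts flat lengths on a fixed neighborhood of $(M_0,\omega_0)$ by a factor $\leq C$. A saddle-connection $\gamma$ of $(M,\omega)$ with $|\textrm{hol}(\gamma)|\leq R$ is an embedded geodesic arc with endpoints in $\Sigma$, so $\varphi(\gamma)$ is an arc of $(M_0,\omega_0)$ with endpoints in $\Sigma$ and length $\leq CR$. Pulling $\varphi(\gamma)$ taut relative to its endpoints --- the flat metric has cone angles $\geq 2\pi$, hence is locally $\mathrm{CAT}(0)$, so the taut representative of an arc is unique --- yields a broken geodesic of $(M_0,\omega_0)$ of length $\leq CR$ whose segments are saddle-connections, each therefore of length $\geq\textrm{sys}(M_0)>0$; so it has at most $CR/\textrm{sys}(M_0)$ segments. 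Now $(M_0,\omega_0)$ carries only finitely many saddle-connections of length $\leq CR$, hence only finitely many broken geodesics of length $\leq CR$ with at most $CR/\textrm{sys}(M_0)$ segments; and the taut representative of $\varphi(\gamma)$ determines $\gamma$ uniquely among the saddle-connections of $(M,\omega)$ (two saddle-connections homotopic rel endpoints coincide, by local $\mathrm{CAT}(0)$-ness). Hence $N_R(M,\omega)$ is bounded, for all $(M,\omega)$ in the neighborhood, by that finite count.

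A more hands-on alternative is the classical packing argument of Masur and Vorobets, which even gives a bound polynomial in $R/\rho$: one associates to each saddle-connection of length $\leq R$ an embedded flat region whose area is controlled from below in terms of $\textrm{sys}(M)$, chosen so that the regions attached to distinct saddle-connections overlap only mildly, and compares the total with the area $1$ of the surface. In either approach the step I expect to demand real care --- the main obstacle --- is the geometric bookkeeping that makes the estimate uniform: the passage from saddle-connections on the \emph{perturbed} surface to controlled data on the \emph{fixed} one in the first approach, or the selection of almost-disjoint embedded regions (in particular the treatment of saddle-connections sharing a direction) in the second. The combinatorial counting, once this is set up, is routine.
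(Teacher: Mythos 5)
The paper does not prove Lemma \ref{l.EM}: it is stated as a known fact with a pointer to \cite{EM}, Theorem 5.4, and \cite{Masur2}. Your first sentence --- quote the result --- is therefore exactly the paper's own treatment and would have sufficed on its own. The independent sketch you add is a genuinely different route (a proof rather than a citation) and is correct in outline: compactness of the $\rho$-thick part of the stratum (Masur's criterion) reduces the claim to local boundedness of the saddle-connection count, which you obtain by transporting saddle-connections of a nearby $(M,\omega)$ to a fixed base surface $(M_0,\omega_0)$ via a bounded-distortion piecewise-affine homeomorphism $\varphi$ (coming from a fixed geodesic triangulation of $(M_0,\omega_0)$ with vertices in $\Sigma$) and pulling taut. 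The tautening and injectivity steps are sound, but should be phrased in the $\mathrm{CAT}(0)$ universal covers of $(M_0,\omega_0)$ and of $(M,\omega)$ respectively, where geodesics between given endpoints exist and are unique; on the surfaces themselves uniqueness of geodesic representatives is a homotopy-class statement, which is what the covering argument supplies. The remaining ingredients --- that each segment of the taut representative is a saddle-connection of $(M_0,\omega_0)$ and hence has length $\geq\mathrm{sys}(M_0)$, and that a fixed surface carries only finitely many saddle-connections of bounded length --- are standard. This route buys a conceptually clean qualitative bound via compactness; the Masur--Vorobets packing argument you mention is the other standard route and is what actually yields the polynomial-in-$R/\rho$ estimate proved in \cite{EM} and \cite{Masur2}, though the lemma as stated does not require that much.
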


\subsection*{Acknowledgements} The authors are thankful to Alex Eskin for several comments on some preliminary versions of this paper. This work was supported by the ERC Starting Grant ``Quasiperiodic'', by the Balzan project of Jacob Palis and by the French ANR grant ``GeoDyM'' (ANR-11-BS01-0004). 

\section{Conditional measures}\label{s.Rokhlin}

\subsection {The general setting}

We start by recalling the content of Rokhlin's desintegration theorem (\cite {R}, see also  \cite{Young}).

\begin{definition}
 A probability space  $(X,\mathcal B, m)$ is {\it Lebesgue}  if either it is purely atomic or 
its continuous part  is isomorphic mod.$0$ to $[0,a]$  equipped with the standard Lebesgue measure. 
Here $a \in (0,1] $ is the total mass of the continuous part of $m$.
\end{definition}

\begin{definition}
A  {\it  Polish space} is a topological space homeomorphic to a separable complete metric space. 
\end{definition}

\smallskip

Any open or closed subset of a Polish space is Polish.

\smallskip

Any Borel probability measure on a   Polish space  is Lebesgue. We generally omit in this case the reference to the $\sigma$-algebra $\mathcal B$, which is the $\sigma$-algebra generated by the Borel sets and the subsets of Borel sets of measure $0$.

\smallskip

 For a partition $\zeta$ of a set $X$,  we denote by $\zeta(x)$ the element of $\zeta$ that contains a point $x \in X$. 

\begin{definition}

Let  $(X,\mathcal B, m)$ be a Lebesgue probability space. A {\it measurable partition} of $X$ is a partition $\zeta$ of $X$ which is the limit  of a monotonous sequence 
$(\zeta_n)_{n \geq 0}$ of finite partitions by elements of $\mathcal B$. This means that, for all 
$x \in X, \;n \geq 0$, one has
$$\zeta_n(x) \in \mathcal B, \quad  \zeta_{n+1} (x) \subset \zeta_n (x), \quad \zeta(x) = \cap_{n\geq 0} \zeta_n(x) .$$  

\end{definition}
 
 As $(X,\mathcal B, m)$ is Lebesgue, the partition of $X$ by points is measurable (mod.0).

\begin{definition} Let  $\zeta$ be a measurable partition of $X$. A {\it system of conditional measures} for $(X,\mathcal B, m, \zeta)$  is a family $(m_x)_{x \in X}$ of probability measures on $(X,\mathcal B)$ satisfying the following properties:
\begin{itemize}
\item For any $x \in X$, one has $m_x( \zeta(x)) =1$.
\item For any $x,y \in X$ such that $\zeta(x) = \zeta(y)$, one has $m_x = m_y$.
\item For any $B \in \mathcal B$, the function $x \mapsto m_x(B)$ is measurable and one has
        $$m(B) = \int_X m_x(B) \; dm(x).$$
\end{itemize}
\end{definition}

\bigskip

The content of Rokhlin's theorem is that such a system of conditional measures always exists, and is essentially unique in the following sense: if $(m'_x)_{x\in X}$ is another such system, then $m_x = m'_x$ for $m$-a.a $x$.

\subsection {A special setting}

 Let $X$ be a Polish space, and let $G$ be a Lie group. 
 We will denote by $\nu$  some given  left invariant Haar  measure on $G$, 
  by $d$ some given left invariant Riemannian distance on $G$, and
  by $p_1$ the  canonical  projection from $X \times G$ onto $X$.
 We let $G$ act on the left on $X \times G$ by $g.(x,h) := (x,gh)$,
  i.e the product of the trivial action by the standard action.




\bigskip

 Let  $Z$ be a non empty  open subset of $X \times G$, and let $m$ be a Borel  probability measure on $Z$.  As an open subset of a Polish space, $Z$ is a Polish space. Thus $(Z,m)$ is a Lebesgue probability space.
 
 \medskip

 We will denote  by $m_1$  the Borel probability measure $(p_1)_* m$ on the open subset $p_1(Z)$ of $X$,
and  by  $Z_x = \{x\} \times U_x$ the fiber of $Z$ over $x$. Here $U_x$ is an open subset of $G$. The partition 
$\zeta$ of $Z$ defined by $\zeta(x,g) = Z_x$ is measurable.
 
 \medskip
 
  Let $(m_{(x,g)})_{(x,g) \in Z}$ be a system of conditional measures for $(Z,m, \zeta)$. From the second property in the definition of conditional measures,
 the measure $m_{(x,g)}$ does not depend on the second variable $g$. We will write $m_x$ instead of 
 $m_{(x,g)}$.
  For each $x \in p_1 (Z)$, $m_x$ may be seen as   a probability measure on $U_x$.


 \begin{definition}
 The Borel probability measure  $m$ on $Z$ is  {\it invariant} if, 
  for any  measurable subset $W\subset Z$, any $g \in G$ such that $g.W \subset Z$, one has $m(g.W) = m(W)$.
  \end{definition}

\begin{proposition}\label{cond-meas}
Assume  that $m$ is invariant. Then, for $m_1$-a.e $x$, the non empty open set $U_x$ has finite Haar measure
and we have 
$$m_x = \frac 1{ \nu(U_x)}  \;  \nu_{| U_x}.$$
\end{proposition}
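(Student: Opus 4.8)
The plan is to exploit the essential uniqueness in Rokhlin's theorem together with the invariance of $m$ under the $G$-action on the second factor. First I would show that the conditional measures inherit a quasi-invariance property: since the partition $\zeta$ into fibers $Z_x = \{x\}\times U_x$ is itself invariant under the $G$-action (because $G$ acts only on the second coordinate, it sends $Z_x$ into $\{x\}\times G$, hence into $Z_x$ whenever it stays in $Z$), I can push forward the system $(m_x)$ by a fixed $g\in G$ and compare with the original system. Concretely, fix $g\in G$ and consider the set $Z^g := \{(x,h)\in Z : (x,g^{-1}h)\in Z\}$, i.e. the part of $Z$ that comes from $Z$ under the action of $g$; on $Z^g$ the push-forward $(g_*m_x)$ (restricted and renormalized appropriately) is a system of conditional measures for $g_*(m|_{g^{-1}Z^g})$, which by invariance of $m$ equals $m|_{Z^g}$. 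By uniqueness, for $m_1$-a.e.\ $x$ the measures $m_x$ and $g_*m_x$ agree on the overlap $U_x\cap gU_x$, up to the obvious normalization.

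The technical subtlety is that this says something for each fixed $g$ only almost everywhere in $x$, and I need it simultaneously for enough group elements to pin down $m_x$. The standard device is to take a countable dense subset $\{g_n\}\subset G$, apply the previous paragraph to each $g_n$, and intersect the full-measure sets. Then for $m_1$-a.e.\ $x$, the measure $m_x$ on $U_x$ is, on every overlap, proportional to its translate by each $g_n$; since the $g_n$ are dense and $m_x$ is a Borel measure, a continuity/regularity argument promotes this to: $m_x$ is, locally, a left-Haar-proportional measure on $U_x$. (Here one uses that Haar measure is the unique, up to scalar, locally finite left-invariant Borel measure, applied on small open sets where translates land inside $U_x$; a partition-of-unity or covering argument stitches the local statement together since $U_x$ is open in $G$ and $G$ is second countable.)

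From local proportionality to $\nu$ it follows that $m_x = c(x)\,\nu|_{U_x}$ for some measurable positive function $c(x)$, wherever this makes sense; in particular $\nu(U_x)$ must be finite, since $m_x$ is a probability measure and $m_x(U_x)=1$ forces $c(x)\nu(U_x)=1$, hence $\nu(U_x)=1/c(x)<\infty$ and $c(x)=\nu(U_x)^{-1}$. This gives exactly the asserted formula
$$ m_x = \frac{1}{\nu(U_x)}\,\nu|_{U_x}. $$
The main obstacle I anticipate is the careful bookkeeping in the comparison step: writing down precisely which restricted and renormalized measures form conditional systems on the overlap sets $Z^g$, checking the three defining properties of a conditional system survive restriction and push-forward, and handling the normalization constants (which are fiberwise, i.e.\ functions of $x$) so that uniqueness can be applied cleanly. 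Everything else — the countable dense set, the intersection of null sets, the uniqueness of Haar measure on open subsets — is routine once that comparison is set up correctly.
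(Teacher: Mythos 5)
Your plan shares the global architecture of the paper's proof (a countable dense subset of $G$, promote $m$-invariance to a statement about the conditional measures $m_x$ for $m_1$-a.e.\ $x$, then invoke essential uniqueness of Haar measure), but the comparison step you set up via Rokhlin uniqueness has a real difficulty that you flag as ``the main obstacle'' without resolving it, and I do not think it is as routine as you suggest. Comparing two systems of \emph{normalized} conditional probabilities for $m|_{Z^g}$ only yields that $m_x$ and $g_*m_x$ are \emph{proportional} on the overlap $U_x\cap gU_x$, with a constant $c_g(x)$ that is an unknown ratio of overlap masses. You then want to deduce local Haar-proportionality and stitch; but (i) to even get absolute continuity of $m_x$ w.r.t.\ $\nu$ one needs a packing argument, and the unknown constants $c_g(x)$ get in the way, and (ii) the stitching by partitions of unity silently assumes $U_x$ is connected, which it need not be. So there is a genuine gap in the argument as stated.

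The paper sidesteps the normalization issue entirely: it never restricts and renormalizes conditional systems. Instead it works directly from the third defining property of conditional measures. Fixing a ball $B\in\mathbb B$ and $g\in D$ with $B,g.B\subset U_x$, for any measurable $Y$ contained in the (open) set of such $x$, one has $\int_Y m_x(B)\,dm_1(x)=m(Y\times B)=m(Y\times g.B)=\int_Y m_x(g.B)\,dm_1(x)$, the middle equality being exactly the invariance of $m$. Since $Y$ is arbitrary, $m_x(B)=m_x(g.B)$ (\emph{exact} equality, no constant) for $m_1$-a.e.\ $x$; intersecting over the countably many $(B,g)$ gives a.e.\ $x$ for which $m_x$ is invariant under all admissible translations. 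The remaining work is the paper's Lemma~\ref{Haar-meas}: a packing argument shows $m_x\ll\nu$ with bounded density $\phi$, and Lebesgue differentiation together with the translation-invariance shows $\phi(x)=\phi(x')$ for any two density points $x,x'\in U_x$ (one moves a small ball near $x$ to one near $x'$ by some $g\in D$, so no connectedness of $U_x$ is needed). If you want to keep your Rokhlin-uniqueness comparison, you would still need this last lemma, and you would additionally have to show the constants $c_g(x)$ are all $1$ --- at which point you have essentially rediscovered the paper's direct calculation.
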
 
 \begin{proof}
 Choose a countable dense subset $D \subset G$ and denote by $\mathbb B$ the set of closed balls (for the distance $d$) in $G$ with center at a point of $D$ and positive rational radius. The proof of the proposition is an easy consequence of the following elementary lemma, whose proof will be given afterwards.

 \begin{lemma}\label{Haar-meas}
 Let $U$ be a non empty open subset of $G$ and let $\mu$ be a probability measure on $U$. 
 Assume that, for any $B \in \mathbb B$, $g \in D$ such that both $B$ and $g.B$ are contained in $U$, one has 
$\mu(g.B) = \mu(B)$. Then $U$ has finite Haar measure and $\mu = \frac 1 {\nu(U)} \nu_{|U}$. 
 \end{lemma}
 
 In view of the conclusion of the lemma, the conclusion of the proposition will follow if we know that,
  for $m_1$-a.e $x$, 
 the measure $m_x$  has the invariance property  stated in the hypothesis of the lemma. 
  \smallskip
  Let $B \in \mathbb B$, $g \in D$. The set $X(B,g)$ formed of $x \in X$ such that both $B$ and $g.B$ 
  are contained in $U_x$ is open in $X$. For any $m_1$-measurable $Y \subset X(B,g)$, one has, 
  as $m$ is invariant,
  \begin{eqnarray*}
  \int_Y m_x(B) \, dm_1(x) &=&   m(Y \times B) \\
                                              &=& m(Y \times g.B) \\
                                              &=&  \int_Y m_x(g.B) \, dm_1(x).
   \end{eqnarray*}
   It follows that $m_x(B) = m_x(g.B)$ for $m_1$-a.a $x \in X(B,g)$. Taking a countable intersection over
    the possible $B,g$ gives the assumption of the lemma so the proof of the proposition is complete.
 \end{proof}
 
 \medskip
 
 \begin{proof}[Proof of lemma \ref{Haar-meas}] We first show that $\mu$ is absolutely continuous w.r.t. the Haar measure $\nu$,
  i.e belongs to the Lebesgue class. 
  Let $B $ be any closed ball in $\mathbb B$ of small radius $r>0$ contained in $U$. There exists an integer 
  $M \geq C r^{-\dim G}$ (where the constant $C>0$ depends on $U$ but not on $B$) and elements 
  $g_1, \ldots, g_M \in D$ such that the balls $g_i .B$ are disjoint and contained in $U$. Then we have,
   from the assumption of the lemma 
  $$ M \mu(B) = \mu (\bigcup_1^M g_i.B) \leq \mu(U) =1.$$
  It follows that $\mu (B) \leq C^{-1} r^{\dim G}$ for all $B \in \mathbb B$ contained in $U$. This implies 
  that $\mu$ belongs to the Lebesgue class and that its density $\phi$ w.r.t. the Haar measure $\nu$ is bounded.
  By the Lebesgue density theorem, for $\nu$-almost all $x \in U$, one has
  $$ \phi (x) = \lim_{B \ni x } \frac {\mu(B)}{\nu(B)}$$
  where the limit is taken over balls in $\mathbb B$ containing $x$ with radii converging to $0$. If $x,x'$
   are any two points of  $U$ with this property, one can find a sequence $B_i$ of balls in $\mathbb B$ 
   with radii converging to $0$ and a sequence $g_i \in D$ such that 
   $x \in B_i \subset U$ and $x' \in g_i.B_i \subset U$ for all $i$. It follows from the invariance assumption on 
   $\mu$ and the left invariance of Haar measure that $\phi(x) = \phi(x')$. Thus the density
   $\phi$ is constant $\nu$-almost everywhere in $U$ and the proof of the lemma is complete. 
 \end{proof}

\begin{remark} \label {beware}
In the defining property of conditional measures
$$ \int_Z  f(x,g)\, dm(x,g) = \int_{p_1(Z)} \left(\int_{U_x} f(x,g) \, dm_x(g)\right) \, dm_1(x)$$
both $m_1$ and $m_x$ are {\bf probability} measures. In the context of Proposition \ref {cond-meas}, 
one may also write
$$ \int_Z  f(x,g)\, dm(x,g) = \int_{p_1(Z)} \left(\int_{U_x} f(x,g) \, d\nu(g)\right) \, (\nu(U_x))^{-1} dm_1(x).$$
However, the measure $(\nu(U_x))^{-1} dm_1(x)$ is not necessarily finite.
\end{remark}

\section {Preliminaries on $SL(2,\Rset)$}\label{s.SL}

\subsection{The decomposition $g_t R_{\theta} n_u$}\label{gRn}

Let $W$ be the set of matrices $ \left ( \begin{array}{cc}a&b \\ c&d \end{array} \right  )$ in $SL(2,\Rset)$ such that $d>0$ and $| bd | <\frac 12$.

\begin{proposition} \label {decomp-grn}
The map $(t,\theta,u) \mapsto g_t R_{\theta} n_u$ from $\Rset \times (-\frac{\pi}4, \frac{\pi}4) \times \Rset$  is a diffeomorphism onto $W$. Using this map to take  $(t,\theta,u)$ as coordinates on $W$, the restriction to $W$ of (a conveniently scaled version of ) the Haar measure is equal to $\cos 2 \theta \;  dt \; d\theta \; du$.
\end{proposition}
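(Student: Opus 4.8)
The plan is to make the product $g_t R_\theta n_u$ completely explicit, recover $(t,\theta,u)$ from the matrix entries, and then obtain the density by pulling back a left-invariant volume form on $SL(2,\Rset)$ built from the Maurer--Cartan form.

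First I would compute $g_t R_\theta n_u=\left(\begin{smallmatrix} e^t(\cos\theta-u\sin\theta) & -e^t\sin\theta \\ e^{-t}(\sin\theta+u\cos\theta) & e^{-t}\cos\theta \end{smallmatrix}\right)$. Writing $a,b,c,d$ for the four entries, the hypothesis $\theta\in(-\tfrac{\pi}{4},\tfrac{\pi}{4})$ gives $d=e^{-t}\cos\theta>0$, while $bd=-\sin\theta\cos\theta=-\tfrac12\sin 2\theta$, so $|bd|<\tfrac12$; hence the image is contained in $W$. For surjectivity and injectivity I would invert the relations in the order $\theta,t,u$: given a matrix in $W$, the equation $\sin 2\theta=-2bd$ has a unique solution $\theta\in(-\tfrac{\pi}{4},\tfrac{\pi}{4})$ precisely because $|2bd|<1$ and $\sin$ maps $(-\tfrac{\pi}{2},\tfrac{\pi}{2})$ diffeomorphically onto $(-1,1)$; this determines $\cos\theta>0$ and $\sin\theta=-bd/\cos\theta$; then $e^{-t}=d/\cos\theta$ determines $t$; and the $(2,1)$-entry gives $u=c/d-\tan\theta$. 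A one-line check shows these values reproduce $b,c,d$, and then the $(1,1)$-entry matches automatically since the reconstructed matrix and the given one both have determinant $1$ and $d\neq 0$. The map is manifestly smooth, and its inverse is smooth by the formulas just written (using that $\arcsin$ is smooth on $(-1,1)$ and $d>0$ on $W$), so it is a diffeomorphism onto $W$. Alternatively, once the volume form is computed below one sees it is nowhere zero, which gives local invertibility, and combined with the bijection this yields the diffeomorphism.

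For the Haar measure I would use the coframe dual to the basis $H=\mathrm{diag}(1,-1)$, $J=\left(\begin{smallmatrix}0&-1\\1&0\end{smallmatrix}\right)$, $E=\left(\begin{smallmatrix}0&0\\1&0\end{smallmatrix}\right)$ of $\mathfrak{sl}(2,\Rset)$: writing the Maurer--Cartan form as $g^{-1}dg=\xi_H H+\xi_J J+\xi_E E$, the $3$-form $\xi_H\wedge\xi_J\wedge\xi_E$ is left-invariant, hence (since $SL(2,\Rset)$ is unimodular) a constant multiple of Haar measure. Along $g=g_tR_\theta n_u$ one has $g^{-1}dg=\mathrm{Ad}(n_u^{-1}R_\theta^{-1})(g_t^{-1}dg_t)+\mathrm{Ad}(n_u^{-1})(R_\theta^{-1}dR_\theta)+n_u^{-1}dn_u$, and the elementary identities $g_t^{-1}dg_t=H\,dt$, $R_\theta^{-1}dR_\theta=J\,d\theta$, $n_u^{-1}dn_u=E\,du$ reduce everything to the adjoint actions $\mathrm{Ad}(R_\theta^{-1})(H)=\cos2\theta\,H+\sin2\theta\,J-2\sin2\theta\,E$, $\mathrm{Ad}(n_u^{-1})(H)=H-2uE$, $\mathrm{Ad}(n_u^{-1})(J)=-uH+J+u^2E$ (and $\mathrm{Ad}(n_u^{-1})(E)=E$). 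Since $\xi_E(\partial_u)=1$ while $\xi_H(\partial_u)=\xi_J(\partial_u)=0$, only the $du$-component of $\xi_E$ survives in the triple wedge, so $\xi_H\wedge\xi_J\wedge\xi_E=\big(\xi_H(\partial_t)\xi_J(\partial_\theta)-\xi_H(\partial_\theta)\xi_J(\partial_t)\big)\,dt\wedge d\theta\wedge du$ with $\xi_H(\partial_t)=\cos2\theta-u\sin2\theta$, $\xi_J(\partial_t)=\sin2\theta$, $\xi_H(\partial_\theta)=-u$, $\xi_J(\partial_\theta)=1$; the bracket collapses to $\cos2\theta$. As $\cos2\theta>0$ on $(-\tfrac{\pi}{4},\tfrac{\pi}{4})$, the pulled-back Haar measure is $\cos2\theta\,dt\,d\theta\,du$ for the appropriate normalization.

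All of this is elementary computation and I do not expect a genuine obstacle; the only points demanding a little care are that the constraint $|bd|<\tfrac12$ is exactly what makes $\theta$ exist and be unique in $(-\tfrac{\pi}{4},\tfrac{\pi}{4})$, that the $(1,1)$-entry of the reconstructed matrix really is forced to be correct, and that one organizes the adjoint bookkeeping so the $dt$- and $d\theta$-parts of $\xi_E$ (which are never needed) can be legitimately discarded.
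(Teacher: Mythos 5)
Your proof is correct. For the diffeomorphism statement you and the paper use the same essential observation — that the second column of $g_t R_\theta n_u$ equals $g_t R_\theta\cdot e_2$ (since $n_u$ fixes $e_2$), so the pair $(b,d)$ determines $(t,\theta)$ smoothly, after which $u$ is forced — you merely carry out the inversion explicitly rather than invoking it. The real divergence is in the Haar-measure computation: the paper first argues abstractly that, by unimodularity, left-invariance under the one-parameter group $g_s$ and right-invariance under $n_v$ force the density $\psi(t,\theta,u)$ to depend on $\theta$ alone, and then uses left-invariance under $R_\phi$ to pin down $\psi(\theta)=\cos 2\theta$; you instead pull back the left-invariant volume form $\xi_H\wedge\xi_J\wedge\xi_E$ through the Maurer--Cartan form and the adjoint identities, reading off the density as a $2\times 2$ Jacobian minor. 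The paper's route is shorter and conceptually leverages the product structure of the parametrization; yours is a self-contained mechanical computation that does not presuppose unimodularity or any symmetry bookkeeping and has the bonus of exhibiting the nowhere-vanishing Jacobian directly, which gives a second route to local invertibility. Both are valid; I verified the adjoint formulas and the final determinant $\big(\cos 2\theta - u\sin 2\theta\big)\cdot 1 - (-u)\sin 2\theta = \cos 2\theta$, and your observation that only the $du$-component of $\xi_E$ contributes to the triple wedge is exactly what makes the discarded terms harmless.
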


\begin{proof}
The vertical basis vector $e_2$  is fixed by $n_u$. For $|\theta|<\frac{\pi}4$ and $t \in \Rset$, its image $(b,d)$ under $g_t \,  R_{\theta}$ satisfy $d>0$ and $|bd|<\frac 12$. 
\smallskip

Conversely, given $ (b,d)$ satisfying these conditions, there is a  unique pair $(t,\theta) \in  \Rset \times (-\frac{\pi}4, \frac{\pi}4) $,
 depending smoothly on $(b,d)$,  such that $(b,d) = g_t R_{\theta}.e_2$. The first assertion of the proposition follows.
 
 \medskip
 
 Write the restriction to $W$ of the Haar measure on $SL(2,\Rset)$ as $\psi(t,\theta,u) dt\, d\theta \, du$, for some positive smooth function $\psi$ on $\Rset \times (-\frac{\pi}4, \frac{\pi}4) \times \Rset$. As the Haar measure is left-invariant and right-invariant, the function $\psi$ does not depend on $t$ and $u$, only on $\theta$. A small calculation, using the left-invariance under $R_{\theta}$, shows that $\psi(\theta) = \cos 2 \theta$.
\end{proof} 







\subsection {Euclidean norms along a $SL(2,\Rset)$-orbit}

The following lemma is elementary and well-known.
\begin{lemma} \label{decomp-RN}
The map $(\theta, a,b) \mapsto R_{\theta} N_{a,b} $ from $\Rset / 2\pi \Zset \times \Rset_{>0} \times \Rset$ to $SL(2,\Rset)$ (corresponding to the Iwasawa decomposition) is a diffeomorphism. The measure $d\theta \; da \; db$ 
is sent to a Haar measure by this diffeomorphism.
\end{lemma}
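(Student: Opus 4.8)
The plan is to prove the two assertions separately: bijectivity and smoothness amount to the Gram--Schmidt (``QR'') factorization of a $2\times 2$ matrix, and the statement about Haar measure follows from a Jacobian computation that is really a disguised polar-coordinate area identity.

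\emph{The diffeomorphism.} I would exhibit the inverse explicitly. Given $g\in SL(2,\Rset)$, its first column $v$ is a nonzero vector, so there are unique $a:=\|v\|>0$ and $\theta\in\Rset/2\pi\Zset$ with $v=a(\cos\theta,\sin\theta)$. Set $N:=R_{-\theta}\,g$; its first column is $R_{-\theta}\,v=(a,0)$, so $N$ is upper triangular with upper-left entry $a$, and since $\det N=\det g=1$ its lower-right entry is $a^{-1}$. Hence $N=N_{a,b}$ for a unique $b\in\Rset$, and $g=R_\theta N_{a,b}$. This is a two-sided inverse to $(\theta,a,b)\mapsto R_\theta N_{a,b}$; both maps are visibly smooth — the entries of $R_\theta N_{a,b}$ are analytic in $(\theta,a,b)$, and $a$, $\theta$, $b$ depend smoothly on the entries of $g$ because $a>0$ — so the map is a diffeomorphism.

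\emph{The measure.} Let $\mu$ be the push-forward of $d\theta\,da\,db$ under this diffeomorphism. It suffices to show $\mu$ is left-invariant, since it is then a left — hence, $SL(2,\Rset)$ being unimodular, a bi-invariant — Haar measure. Fix $h\in SL(2,\Rset)$ and apply the factorization above to $hR_\theta$, writing $hR_\theta=R_{\theta'(\theta)}\,N_{a'(\theta),\,b'(\theta)}$ with $\theta'$, $a'$, $b'$ smooth in $\theta$. Then
$$h\cdot R_\theta N_{a,b}=R_{\theta'(\theta)}\,N_{a'(\theta),\,b'(\theta)}\,N_{a,b}=R_{\theta'(\theta)}\,N_{a'(\theta)\,a,\;\,a'(\theta)\,b+b'(\theta)\,a^{-1}},$$
so in the coordinates $(\theta,a,b)$ left translation by $h$ is $(\theta,a,b)\mapsto\big(\theta'(\theta),\,a'(\theta)\,a,\,a'(\theta)\,b+b'(\theta)\,a^{-1}\big)$. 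In this ordering the Jacobian matrix is lower triangular, with diagonal entries $\tfrac{d\theta'}{d\theta}$, $a'(\theta)$, $a'(\theta)$, so the Jacobian determinant is $\tfrac{d\theta'}{d\theta}\,a'(\theta)^2$.

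The crux — and the only step where I would slow down — is that this determinant equals $1$. The vector $a'(\theta)\big(\cos\theta'(\theta),\sin\theta'(\theta)\big)$ is the first column of $hR_\theta$, i.e.\ the planar curve $\gamma(\theta):=h\,(\cos\theta,\sin\theta)$ written in polar coordinates $(r,\varphi)=(a'(\theta),\theta'(\theta))$. For any such curve one has $\det[\gamma(\theta),\gamma'(\theta)]=r(\theta)^2\,\varphi'(\theta)$, where $[\,\cdot\,,\,\cdot\,]$ denotes the matrix with the indicated columns; here $\gamma'(\theta)=h\,(-\sin\theta,\cos\theta)$, so $[\gamma(\theta),\gamma'(\theta)]=hR_\theta$ and $\det[\gamma(\theta),\gamma'(\theta)]=\det(hR_\theta)=1$. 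Hence $\tfrac{d\theta'}{d\theta}\,a'(\theta)^2=1$, every left translation preserves $\mu$, and $\mu$ is a Haar measure. (One could instead verify invariance only for $h$ ranging over a generating set, e.g.\ $SO(2,\Rset)$ together with the upper-triangular subgroup, but the polar identity disposes of all $h$ uniformly, and that computation is then essentially the whole proof.)
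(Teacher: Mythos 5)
Your proof is correct. Note that the paper itself gives no argument here — it declares the lemma ``elementary and well-known'' and moves on — so there is no paper proof to compare against. Your handling of the diffeomorphism is the standard QR/Gram--Schmidt argument and is fine; the domain is $\Rset/2\pi\Zset\times\Rset_{>0}\times\Rset$, so the angle $\theta$ of the first column, valued in $\Rset/2\pi\Zset$, really is a globally smooth function of $g$ once $a=\|v\|>0$.

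For the Haar measure part, the route you chose is a genuinely clean one and worth flagging: you avoid any direct Jacobian computation of the map $(\theta,a,b)\mapsto R_\theta N_{a,b}$ against a reference Haar measure, and instead verify left-invariance of the pushforward directly. The algebra $N_{a',b'}N_{a,b}=N_{a'a,\,a'b+b'a^{-1}}$ is correct, the Jacobian of left translation in $(\theta,a,b)$-coordinates is indeed lower triangular with diagonal $\bigl(\tfrac{d\theta'}{d\theta},a',a'\bigr)$, and the identity $\det[\gamma,\gamma']=r^2\varphi'$ together with $[\gamma(\theta),\gamma'(\theta)]=hR_\theta$ (determinant $1$) closes the argument. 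A more pedestrian alternative would be to change variables to $K A N$ coordinates $(\theta,t,u)$ with $a=e^t$, $b=ue^t$, note $da\,db=e^{2t}\,dt\,du$, and quote the classical formula $e^{2t}\,d\theta\,dt\,du$ for Haar measure; your polar-coordinate identity replaces that citation with a one-line self-contained computation, which is arguably preferable. The only small stylistic remark: you invoke unimodularity of $SL(2,\Rset)$ to pass from left- to bi-invariance, but you never actually need right-invariance, so that clause could be dropped.
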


\begin{proposition}\label{changing-norms}
Let $v, v'$ be  vectors in $\Rset^2$ with $v \ne \pm v'$. The Haar measure of the set $E(v,v',\tau)$ consisting of the elements $\gamma \in SL(2,\Rset)$ such that
$$ || \gamma || \leq 2, \quad   \exp(-\tau) \leq || \gamma.v || \leq \exp \tau ,\quad  \exp(-\tau) \leq || \gamma.v' || \leq \exp \tau$$
is $O( \tau^{\frac 32})$ when $\tau$ is small; the implied constants are uniform when $|| v \pm v' ||$ is bounded away from $0$.
\end{proposition}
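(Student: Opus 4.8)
The plan is to parametrize $SL(2,\Rset)$ via the Iwasawa decomposition $\gamma = R_\theta N_{a,b}$ of Lemma~\ref{decomp-RN}, so that the Haar measure is $d\theta\,da\,db$, and to reduce the estimate to a bound on the area (in the $(\theta,a,b)$ coordinates) of the region cut out by the three constraints. Since $\|\gamma\|\le 2$ forces $a$ and $b$ into a bounded range (and $a$ bounded away from $0$), we may restrict to a compact parameter box; on such a box all three quantities $\|\gamma\|$, $\|\gamma.v\|$, $\|\gamma.v'\|$ are smooth. The two conditions $\exp(-\tau)\le\|\gamma.v\|\le\exp\tau$ and the analogous one for $v'$ each say that a smooth function of $(\theta,a,b)$ lies in an interval of length $O(\tau)$ around $1$; so I want to show that the set where \emph{both} lie near $1$ has measure $O(\tau^{3/2})$, which is better than the trivial $O(\tau^2)$ one would get from two independent $O(\tau)$-thick slabs intersected transversally — wait, $O(\tau)\cdot O(\tau)=O(\tau^2)$ is actually \emph{smaller}, so the content of the proposition is that even when the two slabs are \emph{tangent} (nearly parallel level sets) one still gets $O(\tau^{3/2})$ rather than the $O(\tau)$ a single slab would give.

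Here is how I would carry that out. Fix the coordinate $\theta$ and consider the slice. For fixed $\theta$, write $\gamma.v = R_\theta N_{a,b} v$; since $R_\theta$ is an isometry, $\|\gamma.v\| = \|N_{a,b} v\|$, and likewise $\|\gamma.v'\| = \|N_{a,b} v'\|$ — so in fact $\theta$ drops out of the two length constraints entirely, and only $\|\gamma\|\le 2$ involves $\theta$. Thus it suffices to bound, uniformly, the $(a,b)$-area of
$$
F(v,v',\tau) := \{(a,b) : \exp(-2\tau)\le \|N_{a,b}v\|^2\le\exp(2\tau),\ \exp(-2\tau)\le \|N_{a,b}v'\|^2\le\exp(2\tau),\ (a,b)\ \text{bounded}\},
$$
and then integrate over the bounded $\theta$-range, picking up only a constant factor. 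Now $\|N_{a,b}v\|^2$ is an explicit quadratic form in $(a,b)$: writing $v=(v_1,v_2)$, one has $N_{a,b}v = (av_1+bv_2,\ a^{-1}v_2)$, so $\|N_{a,b}v\|^2 = (av_1+bv_2)^2 + a^{-2}v_2^2$. Each length condition confines $(a,b)$ to an annular neighborhood — of "radial" thickness $O(\tau)$ — of the curve $\{\|N_{a,b}v\|^2=1\}$, and similarly for $v'$. The key point is the transversality, or rather the degree of tangency, of these two curves. I would show that the gradients of $g(a,b):=\|N_{a,b}v\|^2$ and $g'(a,b):=\|N_{a,b}v'\|^2$ are nowhere proportional \emph{to second order simultaneously with} $g=g'=1$ except in a controlled way; more precisely, that on the relevant compact set one has a lower bound of the form
$$
|\nabla g| + |\nabla g'| + |\det(\nabla g,\nabla g')| + (\text{a second-order nondegeneracy term}) \ \geq\ c > 0,
$$
with $c$ depending only on a lower bound for $\|v\pm v'\|$ (and the size of the box). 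Granting such a bound, a standard sublevel/Morse-type argument — split according to whether $|\nabla g|$ is large (then the $v$-annulus alone has area $O(\tau)$ but intersecting with the $v'$-annulus transversally gives $O(\tau^2)$), or both gradients are small and nearly parallel (then the worst case is a genuine tangency, where a $\tau$-neighborhood of a parabola-like tangency has area $O(\tau^{3/2})$, by the elementary fact that $\{|x|\le\tau,\ |y-x^2|\le\tau\}$, or rather $\{|y|\le\tau,\ |y - cx^2|\le\tau\}$... more to the point $\{|x^2 - y|\le \tau,\ |x|\le 1\}$ has area $O(\sqrt\tau)$ and a further $O(\tau)$ transverse cut is not available, giving $O(\tau^{3/2})$ after combining with the other slab's thickness) — yields the claimed $O(\tau^{3/2})$.

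The main obstacle, and the heart of the matter, is establishing the uniform nondegeneracy estimate: one must verify that the two level curves $\{\|N_{a,b}v\|=1\}$ and $\{\|N_{a,b}v'\|=1\}$ cannot be tangent to \emph{higher} than second order, and that the implied constant in the second-order contact is controlled below by $\|v\pm v'\|$ staying away from $0$. Concretely this is a computation with the quadratic forms $(av_1+bv_2)^2+a^{-2}v_2^2$ and $(av_1'+bv_2')^2+a^{-2}v_2'^2$: the difference of the two conditions, $\|N_{a,b}v\|^2 = \|N_{a,b}v'\|^2$, defines a curve along which the remaining condition $\|N_{a,b}v\|^2=1$ must be shown to be non-stationary or to have at worst a quadratic critical point with controlled second derivative. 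I would isolate this as a short linear-algebra lemma about the pencil of quadratic forms spanned by $q_v$ and $q_{v'}$, using that $v\ne\pm v'$ guarantees these forms are linearly independent; the hypothesis $\|v\pm v'\|$ bounded away from $0$ then makes all constants uniform. Once this lemma is in hand, the area estimate and the integration over $\theta$ are routine, and the proposition follows.
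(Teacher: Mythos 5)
Your parametrization and the observation that $\theta$ drops out of the two length constraints match the paper exactly: the paper also writes $\gamma = R_\theta N_{a,b}$ and studies the map $N(a,b)=(\|N_{a,b}v\|^2,\|N_{a,b}v'\|^2)$. But the central quantitative step — the estimate you call the ``uniform nondegeneracy estimate'' and describe only as a lemma you ``would isolate'' — is the entire mathematical content of the proposition, and you do not prove it. As written, the proposal is a plan, not a proof. The paper closes exactly this gap by a short, explicit computation: normalizing $v=(p,0)$, $v'=(q,r)$, it writes out the Jacobian matrix of $N(a,b)$ and sees that it is invertible precisely unless $r(qa+rb)=0$, i.e.\ unless $\gamma.v,\gamma.v'$ are collinear or orthogonal. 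This reduces the case analysis to three concrete situations: (i) $v'$ nearly a non-unit scalar multiple of $v$, where $E(v,v',\tau)$ is actually empty for small $\tau$; (ii) orthogonality (reachable after conjugating by some $\gamma_0$ with $\|\gamma_0\|\le 3$), where $N(a,b)=(p^2a^2,\,r^2(b^2+a^{-2}))$ and a direct reading-off of the intervals for $a$ and then $b$ gives $O(\tau)\cdot O(\tau^{1/2})=O(\tau^{3/2})$; and (iii) the generic case, where the Jacobian is uniformly invertible on $\|\gamma\|\le 2$ and one gets $O(\tau^2)$. So the approach is the same, but the paper actually locates and computes the degenerate locus, whereas your proposal gestures at a ``pencil of quadratic forms'' lemma (note: $\|N_{a,b}v\|^2$ is not a quadratic form in $(a,b)$, because of the $a^{-2}$ term) without establishing it.

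There is also a technical slip in the one elementary fact you do invoke. You write that $\{\,|x^2-y|\le\tau,\ |x|\le 1\,\}$ has area $O(\sqrt{\tau})$; that set has area $O(\tau)$, since for each $x\in[-1,1]$ the slice in $y$ has length $2\tau$. The correct toy model for quadratic tangency is the one you mention first, $\{\,|y|\le\tau,\ |y-cx^2|\le\tau\,\}$: the two constraints force $|x|=O(\sqrt{\tau})$ and then $y$ ranges over an interval of length $O(\tau)$, giving area $O(\tau^{3/2})$. This is precisely what happens in the orthogonal case in the paper, where the condition on $b$ becomes $|b^2+a^{-2}-\text{const}|=O(\tau)$ near the bottom of the parabola $b\mapsto b^2$. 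Fixing the slip and actually carrying out the Jacobian computation would turn your sketch into a proof essentially identical to the paper's.
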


\begin{proof}
The set $E(v,v',\tau)$  is empty (for small $\tau$) unless $\frac 13 \leq || v || \leq 3, \; \frac 13 \leq || v' || \leq 3$, so we may assume that $v,v'$ are constrained by these inequalities. We may also assume that $v = (p, 0)$ with $\frac 13 \leq p \leq 3$. We write $v' = (q,r)$,  and $\gamma = R_{\theta} N_{a,b} $ as in the Lemma. The function
$$ N:  \gamma \mapsto ( || \gamma.v ||^2, || \gamma.v' ||^2) $$
does not depend on $\theta$; one has
$$ N(a,b) = (p^2 a^2, (qa +rb)^2 +  r^2a^{-2}).$$
The Jacobian matrix of $N$ is 
$$ 2  \left ( \begin{array}{cc} p^2 a&0 \\ q(qa+rb) - r^2 a^{-3}& r(qa + rb)\end{array} \right  ).$$

As $a >0$, this matrix is invertible unless $r(qa + rb) =0$, i.e  $\gamma.v$, $\gamma.v'$ are collinear or orthogonal. We conclude as follows
\begin{itemize}
\item Assume first that  $v' = \pm ( \lambda v + w)$ with $0 < \lambda  \ne 1$, $|| w || < \frac 1{100} | \lambda -1 |$. Then $\gamma.v' =  \pm ( \lambda \gamma.v + \gamma.w)$ with 
$|| \gamma.w || < \frac 2{100} | \lambda -1 |$ when $|| \gamma || \leq 2$. As $\lambda \ne 1$, we cannot have at the same time
 
$$\exp(-\tau) \leq || \gamma.v || \leq \exp \tau ,\;  \exp(-\tau) \leq || \gamma.v' || \leq \exp \tau$$

 if $\tau$ is small enough. The set $E(v,v',\tau)$ is thus empty for small $\tau$; the implied constant
depends only on $| \lambda -1 |$, i.e on $|| v \pm v' ||$.
\item Assume next that $v,v'$ are orthogonal, i.e $q =0$. Then $N(a,b) =   (p^2 a^2, r^2 (b^2 +  a^{-2}))$. The condition $\exp(-\tau) \leq || \gamma.v || \leq \exp \tau$ determines an interval for $a$ of length $O( \tau)$. For each value of $a$ in this interval, the condition $\exp(-\tau) \leq || \gamma.v' || \leq \exp \tau$ determines an interval for $b$ whose length is $O( \tau^{\frac 12})$ ( and exactly of order $\tau^{\frac 12}$ in the worst case $pr=1$). Thus, the Haar measure of $E(v,v',\tau)$ is $O( \tau^{\frac 32})$. Observe that this is still true if we relax the condition 
$|| \gamma || \leq 2$ to $|| \gamma || \leq 6$. 
\item Assume that there exists $\gamma_0 \in SL(2,\Rset)$ such that $ || \gamma_0 || \leq 3$ and 
$\gamma_0 .v,\,\gamma_0 .v'$ are orthogonal. Then the required estimate is a consequence of the 
previous case
after translating by $\gamma_0$, taking into account the  observation concluding the previous discussion.
\item Finally, assume that none of the above holds. Then the Jacobian matrix of $N$ on $|| \gamma || \leq 2$ is everywhere invertible and the norm of its inverse is uniformly bounded when
$|| v \pm v' ||$ is bounded away from $0$. In this case,  the Haar measure of $E(v,v',\tau)$ is $O(\tau^2)$, with a uniform  implied constant when $|| v \pm v' ||$ is bounded away from $0$.
\end{itemize}

\smallskip

The proposition is proved.
\end{proof}

\subsection{On the action of the diagonal subgroup}

Let $R_{\theta}$ be some given rotation. For $T>0$, consider the set 
$$J(T,\theta):= \{t \in \Rset, \; || g_t R_{\theta}. e_2 || < \exp ( -T)\}.$$

\begin{proposition} \label{main-estimate}
The set $J(T,\theta)$ is empty iff $|\sin 2 \theta | \geq \exp (-2T)$. When $|\sin 2 \theta | < \exp (-2T)$, writing  $\sin 2\theta = e^{-2T} \sin \omega$ with $\cos\omega >0$, the set 
 $J(T,\theta)$ is an open interval of length $  \frac 12 \log \frac {1+\cos\omega}  {1-\cos\omega} $.
\end{proposition}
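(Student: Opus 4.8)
The plan is to reduce everything to an explicit computation of $\|g_t R_\theta . e_2\|^2$ as a function of $t$, and then analyze the resulting function by elementary calculus. First I would compute $R_\theta . e_2 = (-\sin\theta, \cos\theta)$, so that $g_t R_\theta . e_2 = (-e^t \sin\theta, e^{-t}\cos\theta)$ and hence
\[
\|g_t R_\theta . e_2\|^2 = e^{2t}\sin^2\theta + e^{-2t}\cos^2\theta.
\]
Setting $s = e^{2t}$ (a bijection from $\Rset$ onto $\Rset_{>0}$), this is $f(s) = s\sin^2\theta + s^{-1}\cos^2\theta$. The function $f$ is strictly convex on $\Rset_{>0}$, tends to $+\infty$ at both ends when $\sin\theta\cos\theta\neq 0$, and has minimum value $2|\sin\theta\cos\theta| = |\sin 2\theta|$ attained at $s_* = |\cot\theta|$ (the degenerate cases $\theta = 0$ or $\theta=\pm\pi/2$ give $f(s) = s^{-1}$ or $f(s)=s$ respectively, which are monotone and never drop below... well, they do drop to $0$, but only in the limit; more precisely $J(T,\theta)$ is then a half-line, but note $\sin 2\theta = 0 < e^{-2T}$ is consistent with $\cos\omega = 1$ giving infinite length, so these fit the stated formula).

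Next I would observe that $J(T,\theta) = \{t : f(e^{2t}) < e^{-2T}\}$ is nonempty if and only if the minimum of $f$ is strictly less than $e^{-2T}$, i.e. $|\sin 2\theta| < e^{-2T}$, which gives the first assertion. When this holds, by strict convexity the sublevel set $\{s>0 : f(s) < e^{-2T}\}$ is an open interval $(s_-, s_+)$, where $s_\pm$ are the two roots of the quadratic $s^2\sin^2\theta - e^{-2T}s + \cos^2\theta = 0$. Since $t\mapsto e^{2t}$ is an increasing diffeomorphism, $J(T,\theta)$ is the open interval of length $\frac12(\log s_+ - \log s_-) = \frac12 \log(s_+/s_-)$.

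It then remains to identify $\frac12\log(s_+/s_-)$ with $\frac12\log\frac{1+\cos\omega}{1-\cos\omega}$ under the substitution $\sin 2\theta = e^{-2T}\sin\omega$, $\cos\omega > 0$. From the quadratic, $s_+ s_- = \cos^2\theta/\sin^2\theta = \cot^2\theta$ and $s_+ + s_- = e^{-2T}/\sin^2\theta$, and the discriminant is $e^{-4T} - 4\sin^2\theta\cos^2\theta = e^{-4T}(1 - \sin^2\omega) = e^{-4T}\cos^2\omega$, so $s_+ - s_- = e^{-2T}|\cos\omega|/\sin^2\theta = e^{-2T}\cos\omega/\sin^2\theta$. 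Therefore
\[
\frac{s_+}{s_-} = \frac{s_+ + s_- + (s_+ - s_-)}{s_+ + s_- - (s_+ - s_-)} = \frac{e^{-2T}(1+\cos\omega)/\sin^2\theta}{e^{-2T}(1-\cos\omega)/\sin^2\theta} = \frac{1+\cos\omega}{1-\cos\omega},
\]
which yields the claimed length. The only mild subtlety — and the one place to be a little careful — is bookkeeping the sign conventions and the degenerate cases $\theta \in \{0, \pm\pi/2\}$ (where $\cos\omega = 1$ and the interval becomes a half-line, consistent with the formula reading $+\infty$), as well as checking that $s_\pm > 0$ so that the corresponding $t_\pm$ are genuinely real; this follows since $s_+ s_- = \cot^2\theta \geq 0$ and $s_+ + s_- > 0$. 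No serious obstacle is expected: the whole argument is a one-variable convexity-plus-explicit-roots computation.
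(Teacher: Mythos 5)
Your proposal is correct and follows essentially the same route as the paper's proof: both compute $\|g_t R_\theta e_2\|^2 = e^{2t}\sin^2\theta + e^{-2t}\cos^2\theta$, substitute $s=e^{2t}$, analyze the resulting quadratic inequality in $s$, and identify the interval length as $\frac12\log(s_+/s_-)$. The only cosmetic difference is that you use Vieta's formulas to compute the ratio $s_+/s_-$ rather than writing out the explicit roots $x_\pm = \frac{e^{-2T}(1\pm\cos\omega)}{2\sin^2\theta}$ as the paper does, and you are a bit more explicit about the degenerate cases $\theta\in\{0,\pm\pi/2\}$.
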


\begin{proof}
A real number $t$ belongs to $J(T,\theta)$  iff
$$ e^{2t} \sin^2 \theta + e^{-2t}\cos^2 \theta < e^{-2T}.$$

Thus $J(T,\theta)$ is empty unless 
$$ \Delta := e^{-4T} - \sin^2 2\theta >0,$$
which gives the first assertion of the proposition.
\smallskip

When this condition holds, we write $\sin 2\theta = e^{-2T} \sin \omega$ with $\cos\omega >0$. One has $\Delta^{\frac 12} =  e^{-2T} \cos \omega$, which implies the second assertion of the proposition through the following elementary calculation. 

By performing the change of variables $x=e^{2t}$, we see that $t\in J(T,\theta)$ if and only if $x^2\sin^2\theta-e^{-2T}x+\cos^2\theta<0$, that is, $t\in J(T,\theta)$ if and only if $x=e^{2t}$ belongs to the open interval $(x_-,x_+)$ between the two roots 
$$x_{\pm}=\frac{\exp(-2T)\pm\sqrt{\Delta}}{2\sin^2\theta}=\frac{\exp(-2T)(1\pm\cos\omega)}{2\sin^2\theta}$$
In order words, $J(T,\theta)=(t_-,t_+)$ where $x_{\pm}=e^{2t_{\pm}}$. In particular, $|J(T,\theta)|=t_+-t_-=\frac{1}{2}\log\frac {1+\cos\omega}  {1-\cos\omega}$.
\end{proof}

For later use, we note that 

\begin{lemma} \label{comp-integral}
$$\int_0^{\frac {\pi}2}    \log \frac {1+\cos\omega}  {1-\cos\omega} \cos \omega \; d \omega = \pi.$$
\end{lemma}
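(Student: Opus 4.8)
The plan is to evaluate the integral by a single integration by parts, differentiating the logarithmic factor and keeping $\cos\omega\,d\omega$ as the part to be integrated. Since the integrand has an integrable logarithmic singularity at $\omega=0$, I would first work on an interval $[\va,\frac{\pi}2]$ and pass to the limit $\va\to 0^+$ at the end, so that the integration by parts is unambiguously justified on a compact interval on which everything is smooth.

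Concretely, I would set $u=\log\frac{1+\cos\omega}{1-\cos\omega}$ and $v=\sin\omega$, so that $dv=\cos\omega\,d\omega$. A short computation using $1-\cos^2\omega=\sin^2\omega$ gives
$$du=-\sin\omega\left(\frac1{1+\cos\omega}+\frac1{1-\cos\omega}\right)d\omega=-\frac{2\sin\omega}{\sin^2\omega}\,d\omega=-\frac2{\sin\omega}\,d\omega,$$
so that the ``$v\,du$'' term collapses to $v\,du=-2\,d\omega$. Integration by parts on $[\va,\frac{\pi}2]$ then yields
$$\int_\va^{\pi/2}\log\frac{1+\cos\omega}{1-\cos\omega}\,\cos\omega\,d\omega=\Big[\sin\omega\,\log\tfrac{1+\cos\omega}{1-\cos\omega}\Big]_\va^{\pi/2}+2\Big(\tfrac{\pi}2-\va\Big).$$

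It then remains to analyze the boundary term as $\va\to0^+$. At $\omega=\frac{\pi}2$ it vanishes because $\log\frac{1+0}{1-0}=0$. At $\omega=\va$ one uses $1-\cos\va\sim\va^2/2$, hence $\log\frac{1+\cos\va}{1-\cos\va}=O(\log(1/\va))$, while $\sin\va\sim\va$; therefore the product tends to $0$. Passing to the limit leaves exactly $2\cdot\frac{\pi}2=\pi$, which is the claim. The only (minor) obstacle is precisely this endpoint estimate at $\omega=0$ — one must confirm that the logarithmic blow-up of $u$ there is dominated by the vanishing of $v=\sin\omega$ — and the elementary asymptotics above settle it.
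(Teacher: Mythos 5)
Your integration by parts is correct and cleanly handled: the choice $u=\log\frac{1+\cos\omega}{1-\cos\omega}$, $v=\sin\omega$ makes $v\,du=-2\,d\omega$, the boundary term at $\pi/2$ vanishes because the logarithm does, and the endpoint at $0$ is killed by the asymptotics $\sin\va\sim\va$ against $\log\frac{1+\cos\va}{1-\cos\va}=O(\log(1/\va))$, leaving exactly $\pi$. This is a genuinely different route from the paper's: there one performs the Weierstrass substitution $u=\tan\frac{\omega}{2}$, rewrites the integral as $4\int_0^1\log u^{-1}\frac{1-u^2}{(1+u^2)^2}\,du$, expands $\frac{1-u^2}{(1+u^2)^2}=\sum_{n\ge0}(-1)^n(2n+1)u^{2n}$, integrates term by term via $\int_0^1 u^n\log u^{-1}\,du=(n+1)^{-2}$, and recognizes the Leibniz series $\sum(-1)^n/(2n+1)=\pi/4$. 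Your argument is shorter and avoids both the change of variables and the justification of the term-by-term integration of a non-uniformly convergent series on $[0,1]$; the trade-off is that the paper's route makes the appearance of $\pi$ structurally transparent (it literally is the Leibniz series), whereas in your version $\pi$ simply falls out of the length of the interval $[0,\pi/2]$ after the $v\,du$ term collapses to a constant.
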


\begin{proof}
The change of variables $u = \tan \frac {\omega} 2$ transforms the given integral into  $4 \int_0^1 \log u^{-1} \frac {1-u^2}{(1+u^2)^2} \, du$. We then have, as $\int_0^1 u^n  \log u^{-1} \, du = (n+1)^{-2}$ for $n \geq 0$
\begin{eqnarray*}
\int_0^1 \log u^{-1} \frac {1-u^2}{(1+u^2)^2} \, du &=& \int_0^1 \log u^{-1} \sum_{n\geq 0} (-1)^n (2n+1)u^{2n} \; du \\
    &=&  \sum_{n\geq 0}\frac { (-1)^n}{ 2n+1} \\
    &=& \frac {\pi} 4
\end{eqnarray*}
\end{proof}

Also for later use, let us observe that 
 
\begin{lemma}\label{l.K(omega)} Given $\omega_0>0$, there exists a constant $K=K(\omega_0)>0$ such that if
$$\exp(-2T)\sin\omega_0<|\sin 2\theta|<\exp(-2T)$$
for some $T>0$ and $\theta$, then 
$$\|g_t R_\theta e_2\|\leq K \exp(-t)$$
for all $t\in J(T,\theta)$.
\end{lemma}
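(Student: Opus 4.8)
The plan is to track the quantity $\|g_t R_\theta e_2\|^2 = e^{2t}\sin^2\theta + e^{-2t}\cos^2\theta$ along the interval $J(T,\theta)$ and compare it with $e^{-2t}$. Writing $f(t) := \|g_t R_\theta e_2\|^2$, the ratio I must bound from above is $f(t) e^{2t} = e^{4t}\sin^2\theta + \cos^2\theta$. Since $\cos^2\theta \le 1$ always, the whole problem reduces to bounding $e^{4t}\sin^2\theta$ for $t \in J(T,\theta)$, i.e.\ to bounding $e^{4t}$ from above on that interval given the constraint $\exp(-2T)\sin\omega_0 < |\sin 2\theta|$.

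First I would invoke Proposition \ref{main-estimate}: under the hypothesis $|\sin 2\theta| < \exp(-2T)$ we may write $\sin 2\theta = e^{-2T}\sin\omega$ with $\cos\omega > 0$, and $J(T,\theta) = (t_-,t_+)$ with $e^{2t_\pm} = x_\pm = \dfrac{\exp(-2T)(1\pm\cos\omega)}{2\sin^2\theta}$. The extra hypothesis $\exp(-2T)\sin\omega_0 < |\sin 2\theta|$ translates, via $|\sin 2\theta| = e^{-2T}|\sin\omega|$ and $\cos\omega>0$, into $\sin\omega > \sin\omega_0$, hence $\cos\omega < \cos\omega_0 =: c_0 < 1$ (for $\omega, \omega_0$ in $(0,\pi/2)$). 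Now for $t \in J(T,\theta)$ one has $e^{2t} < x_+ = \dfrac{\exp(-2T)(1+\cos\omega)}{2\sin^2\theta}$. Using $2\sin^2\theta = \frac{1}{2}(2\sin\theta\cos\theta)^2/\cos^2\theta \ge \frac12\sin^2 2\theta$ is the wrong direction; instead I note directly that $e^{4t}\sin^2\theta < x_+^2 \sin^2\theta = \dfrac{\exp(-4T)(1+\cos\omega)^2}{4\sin^2\theta}$, and since $\sin^2\theta \ge \frac14\sin^2 2\theta = \frac14 e^{-4T}\sin^2\omega$ — wait, $\sin^2 2\theta = 4\sin^2\theta\cos^2\theta \le 4\sin^2\theta$, so $\sin^2\theta \ge \frac14\sin^2 2\theta = \frac14 e^{-4T}\sin^2\omega$ — we get
$$
e^{4t}\sin^2\theta < \frac{\exp(-4T)(1+\cos\omega)^2}{4\cdot\frac14 e^{-4T}\sin^2\omega} = \frac{(1+\cos\omega)^2}{\sin^2\omega} = \frac{(1+\cos\omega)^2}{(1-\cos\omega)(1+\cos\omega)} = \frac{1+\cos\omega}{1-\cos\omega} \le \frac{1+c_0}{1-c_0}.
$$
Therefore $f(t)e^{2t} = e^{4t}\sin^2\theta + \cos^2\theta \le \frac{1+c_0}{1-c_0} + 1 =: K^2$, which gives $\|g_t R_\theta e_2\| \le K e^{-t}$ with $K = K(\omega_0) = \bigl(\frac{1+\cos\omega_0}{1-\cos\omega_0} + 1\bigr)^{1/2}$.

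The only real subtlety is matching the sign/range conventions: $\theta$ ranges over $(-\pi/4,\pi/4)$ so $2\theta \in (-\pi/2,\pi/2)$ and $\cos 2\theta > 0$, while $\omega$ is chosen with $\cos\omega > 0$; one must check that the inequality $|\sin 2\theta| > e^{-2T}\sin\omega_0$ indeed forces $\cos\omega < \cos\omega_0$ and not merely $|\cos\omega|$, which is immediate since both cosines are positive and $\omega_0$ can be taken in $(0,\pi/2)$. I expect this bookkeeping, rather than any estimate, to be the main (minor) obstacle; the inequality $\sin^2 2\theta \le 4\sin^2\theta$ and the telescoping $\frac{(1+\cos\omega)^2}{\sin^2\omega} = \frac{1+\cos\omega}{1-\cos\omega}$ do all the analytic work.
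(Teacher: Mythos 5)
Your proof is correct and takes essentially the same route as the paper: both bound $e^{2t}$ on $J(T,\theta)$ by $x_+$ from Proposition \ref{main-estimate}, combine with $\sin^2\theta\geq\frac14\sin^2 2\theta = \frac14 e^{-4T}\sin^2\omega$, and use $|\omega|>\omega_0$ to get a bound depending only on $\omega_0$. The only (cosmetic) difference is that you bound $e^{4t}\sin^2\theta$ directly by $x_+^2\sin^2\theta$ and simplify $\frac{(1+\cos\omega)^2}{\sin^2\omega}=\frac{1+\cos\omega}{1-\cos\omega}$, whereas the paper first uses the defining inequality $e^{2t}\sin^2\theta<\exp(-2T)$ of $J(T,\theta)$ to write $e^{4t}\sin^2\theta<\exp(-2T)e^{2t}$ and then bounds $e^{2t}\leq x_+$, ending with the slightly looser constant $K(\omega_0)=1+\frac{4}{\sin^2\omega_0}$.
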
 

\begin{proof} Note that 
$$\|g_t R_{\theta} e_2\|^2 = e^{-2t}(\cos^2\theta + e^{4t}\sin^2\theta)\leq e^{-2t}(1 + e^{4t}\sin^2\theta)$$

In particular, from the definition of $J(T,\theta)$, we have that $e^{2t}\sin^2\theta\leq \|g_t R_{\theta} e_2\|^2<\exp(-2T)$ for $t\in J(T,\theta)$. By combining these two estimates, we deduce that
$$\|g_t R_{\theta} e_2\|^2\leq e^{-2t}(1+\exp(-2T)e^{2t})$$

On the other hand, by the elementary calculation in the end of the proof of Proposition \ref{main-estimate}, we know that, by writing $\sin2\theta=\exp(-2T)\sin\omega$,
$$e^{2t}\leq x_+:=\exp(-2T)\frac{1+\cos\omega}{2\sin^2\theta}$$
for every $t\in J(T,\theta)$.

By hypothesis, $\exp(-2T)\sin\omega_0<|\sin2\theta|<\exp(-2T)$ (i.e., $\omega_0<|\omega|<\pi/2$), so that we conclude from the previous inequality that
$$\exp(-2T)e^{2t}\leq \exp(-4T)\frac{1}{\sin^2\theta}\leq \frac{4}{\sin^2\omega_0}$$

By plugging this into our estimate of $\|g_t R_{\theta} e_2\|^2$ above, we deduce that
$$\|g_t R_{\theta} e_2\|^2\leq e^{-2t}\left(1+\frac{4}{\sin^2\omega_0}\right):=e^{-2t} K(\omega_0),$$
and thus the proof of the lemma is complete.
\end{proof}

\section{Construction of a measure  related to $m$}\label{s.conditionals}

In the next three sections, the setting is as indicated in subsection \ref{reduction} and subsection \ref {notations}:
$\mathcal C$ is a $SL(2,\Rset)$-invariant {\bf manifold} contained in moduli space, and $m$ is a $SL(2,\Rset)$-invariant probability measure supported on $\mathcal C$.

 \bigskip

Until further notice, we fix some number $\rho>0$, small enough so that the $m$-measure of $\{ M \in \mathcal C:\, {\rm sys}(M) > \rho \}$ is positive.

\smallskip
Let $X$ be the level set $ \{ M \in \mathcal C : \, {\rm sys}(M) = \rho \}$.
Let $X_0^*$ be the subset of $X$ formed of surfaces $M$ for which all non-vertical saddle-connections have
 length $> \rho$. Let $X^*:=  \bigcup_{\theta} R_{\theta}(X^*_0)$.
Observe that in this union, one has $R_{\theta}(X^*_0)= R_{\theta+\pi}(X^*_0)$ but also
 $R_{\theta_0}(X^*_0) \cap R_{\theta_1}(X^*_0) = \emptyset$ 
 for $-\frac {\pi}2 < \theta_0 < \theta_1 \leq \frac { \pi}2$. Thus we have 
$$ X^* = \bigsqcup_{-\frac {\pi}2 < \theta  \leq \frac { \pi}2}  R_{\theta}(X^*_0).$$

\medskip

The subsets  $X^*$  and $X_0^*$ are  submanifolds of $\mathcal C$ of codimension one and two respectively.



 Observe that for $|\theta| <\frac {\pi}4$, $g_t R_{\theta} e_2 $ is shorter than $e_2$ for $0<t<\log \cot |\theta|$. It follows that
 $g_t R_{\theta}(X_0^*)$ is disjoint from $X$ for $0<t < \log \cot |\theta|$, and thus the union
$$ Y^* := \bigcup_{ |\theta|< \frac {\pi}4} \bigcup_{0< t < \log \cot |\theta|} g_t R_{\theta}(X_0^*) =  \bigsqcup_{ |\theta|< \frac {\pi}4}\bigsqcup_{0< t <  \log \cot |\theta|} g_t R_{\theta}(X_0^*)$$
is a {\bf disjoint} union. 
\smallskip

In the following proposition, we use this to identify $Y^*$ with 
$ \{ (t,\theta,M) \in   \Rset \times (-\frac{\pi}4,\frac {\pi}4) \times X_0^*, \;  0\,< t < \log \cot |\theta | \}$.








\smallskip

\begin{proposition} \label{measure-on-X}
 The $m$-measure of $Y^*$ is positive . Moreover, there exists a finite measure $ m_0$ on $X_0^*$ such that the restriction of $m$ to $Y^*$ satisfies
  $$  m_{|Y^*} = dt \times  cos 2\theta \,d\theta \times   m_0 .$$
\end{proposition}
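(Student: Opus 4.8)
The plan is to realize $Y^*$ as an open subset of $\mathcal{C}\times SL(2,\Rset)$ (up to a covering issue coming from automorphisms) and then apply the conditional measure machinery of Proposition~\ref{cond-meas}. First I would fix a small open subset $V$ of $X_0^*$ over which the translation surface bundle trivializes and such that a neighborhood of $V$ inside $\mathcal{C}$ is, via the period coordinates, parametrized by $V\times W$ where $W\subset SL(2,\Rset)$ is the neighborhood of the identity from Proposition~\ref{decomp-grn} (or a slightly smaller one). Concretely, the map $(M,\gamma)\mapsto \gamma\cdot M$ is a local diffeomorphism, and since the systole of $M\in X_0^*$ equals $\rho$ and is realized only by vertical saddle-connections, for $(t,\theta,u)$ in a small enough box the surface $g_tR_\theta n_u\cdot M$ still has systole close to $\rho$ realized by the images of those vertical saddle-connections; in particular $n_u$ fixes $e_2$, $R_\theta$ and $g_t$ act on the vertical holonomy by $e_{-t}(\cos\theta\, e_2 - \sin\theta\, e_1)$ rescaled, and a short computation shows the systole drops below $\rho$ exactly when $0<t<\log\cot|\theta|$, matching the description of $Y^*$. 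The upshot is that $Y^*$ is a disjoint union, over a locally finite cover of $X_0^*$ by such $V$'s, of images of open subsets $Z_V$ of $X_0^*\times SL(2,\Rset)$; the only subtlety is that the identification is up to the finite automorphism group, which acts by permuting vertical separatrices — since we are on $\mathcal{C}=\mathcal{C}^i$ this group has locally constant order $i$ and the $i$-to-$1$ ambiguity just introduces a harmless constant factor that can be absorbed into $m_0$.

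Next, I would invoke Proposition~\ref{cond-meas}. The measure $m$ restricted to the chart $V\times W$ is $SL(2,\Rset)$-invariant in the sense required there (left translation in the group variable corresponds to the $SL(2,\Rset)$-action on $\mathcal{C}$, which preserves $m$), so its conditional measures along the fibers $\{x\}\times U_x$ are the normalized restrictions of Haar measure. By Proposition~\ref{decomp-grn}, in the coordinates $(t,\theta,u)$ the Haar measure on $W$ is $\cos 2\theta\, dt\, d\theta\, du$; the fiber $U_x$ over $x\in V$ is, after intersecting with $Y^*$, the region $\{(t,\theta,u): 0<t<\log\cot|\theta|\}$ (the $u$-coordinate is unconstrained in $W$, but note $Y^*$ as defined involves only $g_tR_\theta$, not $n_u$ — so I must be slightly careful). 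To handle the $n_u$ issue cleanly, I would instead use the coordinates $g_tR_\theta$ directly: the set $\{g_tR_\theta(X_0^*)\}$ is a submanifold and one checks (using that $X_0^*$ already has codimension two, $X^*$ codimension one) that $(t,\theta,M)\mapsto g_tR_\theta\cdot M$ is a diffeomorphism onto $Y^*$, and the relevant slice of Haar measure pushes to $dt\times\cos 2\theta\, d\theta$ times the conditional measure on $X_0^*$. Gluing the local conditional measures over the cover via the essential uniqueness in Rokhlin's theorem produces a single measure $m_0$ on $X_0^*$ with $m_{|Y^*}=dt\times\cos 2\theta\, d\theta\times m_0$; finiteness of $m_0$ follows because $m_{|Y^*}\le m$ is finite and the $(t,\theta)$-integral $\int\int_{0<t<\log\cot|\theta|}\cos 2\theta\, dt\, d\theta$ over $|\theta|<\pi/4$ is a positive finite constant (indeed it is related to the integral computed in Lemma~\ref{comp-integral}). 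Positivity of $m(Y^*)$ is the statement that $m_0\ne 0$, which I would get by contradiction: if $m_0=0$ then $m$-a.e. surface of systole $>\rho$ could be pushed by $SL(2,\Rset)$ to low systole only through a set of measure zero, contradicting that $m(\{\mathrm{sys}>\rho\})>0$ together with the Siegel--Veech estimate $m(\{\mathrm{sys}\le\rho'\})=O(\rho'^2)\to 0$, which forces some definite amount of mass to flow across every level $\{\mathrm{sys}=\rho\}$; more precisely one applies Lemma~\ref{basic} with $\nu$ supported on $g_t$'s to see that a positive-measure set of surfaces enters the sublevels, and every such surface lies on an $SL(2,\Rset)$-orbit meeting $X^*$.

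The main obstacle I anticipate is \textbf{not} the measure-theoretic bookkeeping but the geometric verification that the map $(t,\theta,M)\mapsto g_tR_\theta\cdot M$ is injective on the stated domain and that its image is exactly $Y^*$ with the right transversality — equivalently, that a surface in a deep enough neighborhood of $X$ with systole slightly below $\rho$ whose short saddle-connections are "almost vertical" has a \emph{unique} way of being written this way. This requires knowing that along such an orbit piece the systole is realized only by the continuation of the originally-vertical saddle-connections (so that the minimizing direction varies continuously and $X_0^*$ is hit transversally), which in turn uses that on $X_0^*$ all competing saddle-connections have length strictly greater than $\rho$, hence stay longer than $\rho$ under the bounded perturbation $g_tR_\theta$ with $\|g_tR_\theta\|$ controlled — this is a uniform-openness argument on the locally compact $X_0^*$ that I would need to make quantitative, possibly shrinking $\rho$ or working on compact pieces and using Lemma~\ref{l.EM} to bound the number of competing saddle-connections.
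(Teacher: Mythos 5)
Your proposal circles the right ideas but misses the single technical device that makes the paper's construction of $m_0$ work, and without it the key step is unjustified.

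You correctly notice the dimension problem: $X_0^*$ has codimension $2$ in $\mathcal C$, so $X_0^*\times SL(2,\Rset)$ is one dimension too big to parametrize a neighborhood in $\mathcal C$, and the excess direction is $n_u$. Your fix is to ``use the coordinates $g_tR_\theta$ directly'' and assert that ``the relevant slice of Haar measure pushes to $dt\times\cos 2\theta\, d\theta$ times the conditional measure on $X_0^*$.'' This is exactly the point that needs proof, and you do not supply one. Proposition~\ref{cond-meas} applies to an invariant measure on an open subset of $X\times G$ with $G$ acting by left translation on the second factor; it says nothing about a $2$-parameter ``slice'' $\{g_tR_\theta\}$, which is not a subgroup and for which there is no a priori reason that the desintegration along $u$-fibers is Lebesgue. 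The paper resolves this by the observation you omit: since $n_u$ fixes $e_2$, the vector field $\mathfrak n$ generating $\{n_u\}$ is \emph{tangent to $X_0^*$}. This lets one take a codimension-one transversal $\Sigma\subset X_0^*$ to $\mathfrak n$, so that $(t,\theta,u,M)\mapsto g_tR_\theta n_u.M$ on $\Rset\times(-\frac{\pi}{4},\frac{\pi}{4})\times(-u_0,u_0)\times\Sigma$ is a genuine local diffeomorphism onto an open set $V\subset\mathcal C$ with correct dimension, Proposition~\ref{cond-meas} applies with $G=SL(2,\Rset)$ to give $\Psi^*(m_{|V})=dt\times\cos 2\theta\,d\theta\times du\times\nu$, and one then \emph{reassembles} $du\times\nu$ into the measure $m_0$ on $X_0^*$ via $\Psi_0(u,M)=n_u.M$. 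The reassembly is only possible because $n_u$ preserves $X_0^*$; this is what rigorously transfers one factor of Haar measure from the group into the base and produces the stated $dt\times\cos 2\theta\,d\theta\times m_0$. Your proposal never makes this transfer.

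Your positivity argument is also more roundabout than needed and has a gap of its own: you argue that a positive-measure set of surfaces drops below systole $\rho$ under $\{g_t\}$ and that ``every such surface lies on an $SL(2,\Rset)$-orbit meeting $X^*$.'' The second clause does not follow from the first: an orbit can cross the level $\{\mathrm{sys}=\rho\}$ at a surface whose shortest saddle-connections are not all parallel, i.e.\ in $X\setminus X^*$, so the orbit may pass below $\rho$ without entering $Y^*$. The paper's argument is more direct and avoids this: for a given $M$ with $\mathrm{sys}(M)>\rho$, one explicitly produces $\gamma_0=g_sR_\omega$ with $\gamma_0.M\in X_0^*$, and then the three-parameter family $g_tR_\theta n_u\gamma_0$ lands in $Y^*$ for $|\theta|<\frac{\pi}{4}$, $0<t<\log\cot|\theta|$, and small $u$, giving a nonempty open set of group elements; Lemma~\ref{basic} then converts this into $m(Y^*)>0$. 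If you want to keep your contradiction approach, you would still need the same explicit construction to see that the set of $\gamma$ sending $M$ into $Y^*$ (not merely into low systole) has positive Haar measure.

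Finally, the ``main obstacle'' you identify at the end --- injectivity and transversality of $(t,\theta,M)\mapsto g_tR_\theta.M$ --- is in fact not the crux. The paper disposes of injectivity in one line by observing that for $M\in R_\theta(X_0^*)$, all minimizing saddle-connections of $\Phi_t(M)$ make the same angle $\theta$ with the vertical, so the decomposition is forced; there is no need for the quantitative uniform-openness argument you anticipate. The genuine technical content is the $\mathfrak n$-tangency and the $\Sigma$-transversal, which your proposal lacks.
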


 \begin{proof}
 
  As $n_u$ fixes the vertical basis vector $e_2$, the 
 vector field $\mathfrak{n}$ which is the infinitesimal generator of the action on $\mathcal C$ of the one-parameter group $n_u$ is tangent to 
 $X_0^*$ at any point of $X_0^*$.
 
 \smallskip

 Let $M \in \mathcal C$ be such that ${\rm sys}(M) > \rho$. We claim the set of $\gamma \in SL(2,\Rset)$ 
 such that 
 $\gamma.M \in Y^*$ has non empty interior, hence positive Haar measure. It then follows from 
 Lemma \ref{basic} and the hypothesis on $\rho$  that $m(Y^*)>0$.
 
 \smallskip
 
 To prove the claim, it is sufficient to check that there exists $\gamma_0 \in SL(2,\Rset)$ such that
  $\gamma_0 . M \in X_0^* $,
 because then $g_t R_{\theta} n_u \gamma_0 .M \in Y^*$ for small $u$, $|\theta| < \frac {\pi}4$, and 
 $ 0 < t < \log \cot |\theta|$. This element $\gamma_0$ can be taken as $\gamma_0 = g_s R_{\omega}$, where 
 $\omega$ is such that the systole for $R_{\omega}.M$ is realized in the vertical direction, and 
 $s = \log \frac { {\rm sys}(M)} { \rho}$. This proves the first assertion of the proposition.

 \bigskip

  Let  $\Sigma$ be a smooth codimension-one submanifold of $X_0^* $ 
  which is transverse to the vector field $\mathfrak{n}$ (infinitesimally generating $n_u$). Taking $\Sigma$ small enough, there exists $u_0 >0$ such that 
  $n_u.\Sigma \subset X_0^*$ for $|u| < u_0$ and  $n_u.\Sigma \cap \Sigma = \emptyset$
   for $0 <u < 2 u_0$.  
  
 \medskip
 
 Then the map 
$$ \Psi_0(u,M) :=n_u.M$$
is an smooth diffeomorphism from 
$ (-u_0,u_0) \times \Sigma$
onto a subset $U$ of $X_0^*$ and the  map 
$$ \Psi(t,\theta,u,M) :=g_t R_{\theta} n_u.M$$
is an smooth diffeomorphism from 
$$\{(t,\theta,u,M) \in \Rset  \times  (-\frac{\pi}4,\frac {\pi}4)  \times (-u_0,u_0) \times \Sigma,\;  0 < t < \log \cot |\theta|\}$$
onto an open subset $V$ of $\mathcal C$. Moreover, one can choose a locally finite covering of 
 $X_0^*$ by such sets $U$. The measure $ m_0$ will be defined by its restriction to such sets.
 
 \smallskip
 
 If the $m$-measure of $V$ is $0$, the set $U$ will be disjoint from the support of $ m_0$.


\smallskip

Assume that $ m(V) >0$.
As $m$ is $SL(2,\Rset)$-invariant, the measure $\Psi^*(m_{|V})$ can be written, using Proposition \ref {cond-meas} (with $G = SL(2,\Rset)$) and Proposition \ref {decomp-grn}, as 
 
$$ dt \times  \cos 2\theta \; d\theta \times du \times \nu$$
 for some finite  measure $\nu$ on $\Sigma$ (cf. Remark \ref {beware}: the open subset $U_x$ of 
 Proposition \ref {cond-meas} is here independent of $x$).

  One thus defines 
 
 $$   m_{0|U} =( \Psi_0)_* (du \times \nu).$$
 
 Then  $ m_0$ satisfies the required conditions. 
\end{proof}

In the next corollary, $J(T, \theta)$ is the interval that has been defined in Proposition \ref{main-estimate}.

\begin{corollary} \label{measure-estimate}
Let $B$ be a Borel subset of $X_0^*$, $\omega_0 \in (0,\frac{\pi}2]$, $T>0$. Let $Y(T,\omega_0, B)$ 
be the set of surfaces $M' = g_t R_{\theta} M$ in $Y^*$ such that $M \in B$, 
$| \sin 2 \theta | < \exp(-2T) \sin \omega_0$ and $ t \in J(T, \theta)$. One has 
$$ m(Y(T,\omega_0, B)) =  \frac 14  \exp(-2T) \, m_0 (B)  \int_{-\omega_0}^{\omega_0}    \log \frac {1+\cos\omega}  {1-\cos\omega} \cos \omega \; d\omega $$

\end{corollary}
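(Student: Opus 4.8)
The plan is to read off $m(Y(T,\omega_0,B))$ directly from the product formula $m_{|Y^*} = dt \times \cos 2\theta\, d\theta \times m_0$ of Proposition \ref{measure-on-X}, once $Y(T,\omega_0,B)$ has been described in the coordinates $(t,\theta,M) \in \Rset \times (-\frac{\pi}4,\frac{\pi}4) \times X_0^*$ identifying $Y^*$ with $\{0 < t < \log\cot|\theta|\}$.

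First I would check that $Y(T,\omega_0,B)$ is contained in $Y^*$ and is carried by this identification exactly onto
$$\left\{(t,\theta,M)\ :\ M \in B,\ |\sin 2\theta| < \exp(-2T)\sin\omega_0,\ t \in J(T,\theta)\right\}.$$
The one thing to verify is the inclusion $J(T,\theta) \subset (0,\log\cot|\theta|)$: since $\|g_t R_\theta e_2\|^2 = e^{2t}\sin^2\theta + e^{-2t}\cos^2\theta$ takes the value $1$ precisely at $t=0$ and $t = \log\cot|\theta|$ and is strictly below $1$ in between, the sublevel set $J(T,\theta) = \{\|g_t R_\theta e_2\| < \exp(-T)\}$ lies inside $(0,\log\cot|\theta|)$ because $\exp(-T) < 1$; moreover $|\sin 2\theta| < \exp(-2T) < 1$ keeps $\theta$ inside $(-\frac{\pi}4,\frac{\pi}4)$. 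As the union defining $Y^*$ is disjoint, the decomposition $M' = g_t R_\theta M$ with $M \in X_0^*$ is unique, so this description is exact.

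Then Fubini gives
$$m(Y(T,\omega_0,B)) = m_0(B) \int_{\{|\sin 2\theta| < \exp(-2T)\sin\omega_0\}} |J(T,\theta)|\,\cos 2\theta\, d\theta,$$
and I would evaluate the remaining integral via the substitution of Proposition \ref{main-estimate}: on the relevant $\theta$-range one writes $\sin 2\theta = \exp(-2T)\sin\omega$ with $\omega \in (-\frac{\pi}2,\frac{\pi}2)$, so that $|J(T,\theta)| = \frac12 \log\frac{1+\cos\omega}{1-\cos\omega}$, the constraint $|\sin 2\theta| < \exp(-2T)\sin\omega_0$ becomes $|\omega| < \omega_0$, and the map $\theta \mapsto \omega$ is an increasing diffeomorphism onto $(-\frac{\pi}2,\frac{\pi}2)$ with $2\cos 2\theta\, d\theta = \exp(-2T)\cos\omega\, d\omega$, i.e. $\cos 2\theta\, d\theta = \tfrac12\exp(-2T)\cos\omega\, d\omega$. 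Substituting produces the factor $\frac14\exp(-2T)$ (one $\frac12$ from $|J(T,\theta)|$, one from the Jacobian) together with $\int_{-\omega_0}^{\omega_0} \log\frac{1+\cos\omega}{1-\cos\omega}\cos\omega\, d\omega$, which is the asserted identity.

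I do not anticipate any real obstacle: all the analytic content is already contained in Propositions \ref{measure-on-X} and \ref{main-estimate}, and what remains is bookkeeping. The only points demanding a little care are confirming that the pieces $g_t R_\theta M$, $t \in J(T,\theta)$, genuinely lie in the disjoint union $Y^*$ (so the product formula applies verbatim), and tracking in the change of variables $\sin 2\theta = \exp(-2T)\sin\omega$ that the $\theta$-domain maps onto the full interval $(-\frac{\pi}2,\frac{\pi}2)$ and that the Jacobian contributes exactly $\tfrac12\exp(-2T)$.
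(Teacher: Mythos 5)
Your proposal is correct and follows essentially the same route as the paper: it reads off the measure from the product formula of Proposition \ref{measure-on-X} and then performs the change of variables $\sin 2\theta = \exp(-2T)\sin\omega$ using the length formula of Proposition \ref{main-estimate}. The only difference is that you spell out two sanity checks (that $J(T,\theta) \subset (0,\log\cot|\theta|)$ so $Y(T,\omega_0,B)$ really sits in $Y^*$, and the Jacobian $\cos 2\theta\, d\theta = \tfrac12\exp(-2T)\cos\omega\, d\omega$) which the paper leaves implicit.
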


\begin{proof}
Write  $\sin 2\theta = e^{-2T} \sin \omega$ as in Proposition \ref{main-estimate} , so that the condition 
$| \sin 2 \theta | < \exp(-2T) \sin \omega_0$ is equivalent to $|\omega| < \omega_0$. 
As the length of $J(T, \theta)$ is 
  $$ \frac 12 \log \frac {1+\cos\omega}  {1-\cos\omega} $$
we obtain  from Proposition \ref{measure-on-X}
 \begin{eqnarray*}
m(Y(T,\omega_0, B)) & = & m_0 (B) \int_{|\sin 2 \theta| < \exp(-2T) \sin \omega_0 }  |J(T,\theta)| \cos 2 \theta d\theta \\
                                  & = & \frac 14  \exp(-2T)  m_0 (B)  \int_{-\omega_0}^{\omega_0}    \log \frac {1+\cos\omega}  {1-\cos\omega} \cos \omega \; d\omega. \\
   \end{eqnarray*}
  \end{proof}

\section{Measure of the  slice $\{\rho \geq {\rm sys}(M) \geq \rho \exp (-\tau)\}$}\label{s.slices}

For $\rho >0$, we denote by $F(\rho)$ the $m$-measure of the set $\{ sys(M) \leq \rho\} $. This is a non-decreasing function of $\rho$. 

\smallskip

For any $\rho >0$, the level set $\{ sys(M) = \rho\} $ has $m$-measure $0$, 
as may be seen for instance by an elementary application of Lemma \ref{basic}. 
It follows that the function $F$ is continuous. We will prove in this section (see Corollary \ref{derivative-of-F}) 
that, for any $\rho$ such that $F(\rho)<1$, the function $F$ has at $\rho$ a positive left-derivative $F'(\rho)$
which is equal to $\pi \rho^{-1}  m_0(X_0^*)$, where $X_0^*$, $m_0$ are as 
Proposition \ref {measure-on-X}.

\medskip

In this section, $\rho$ is as above a positive number such that $F(\rho) <1$.

\subsection {The regular part of the slice}

For $M \in X^*$ and $t\geq 0$, we define 
$$ \Phi_t(M):= R_{\theta} g_t R_{-\theta} (M), \quad {\rm when} \quad M \in R_{\theta}(X^*_0).$$
Observe that the two possible choices for $\theta$ give the same result as $R_{\pi}$ is the non trivial element of the center of $SL(2,\Rset)$.

\smallskip

The systole of $ \Phi_t(M)$ is equal to $\rho \; \exp (-t)$. For $ M \in R_{\theta}(X^*_0)$, all saddle connections of $ \Phi_t(M)$ with minimal length have the same angle $\theta$ with the vertical direction. It follows that the $\Phi_t$ are injective, and that $\Phi_t(X^*) \cap \Phi_{t'} (X^*) = \emptyset$ for $t \ne t'$.

\smallskip

\begin{definition}
For $\tau>0$, the set  $\bigsqcup_{0 \leq t \leq \tau} \Phi_t (X^*)$ is called the {\it regular part} of the slice  
$$ S(\tau):= \{\rho \geq {\rm sys}(M) \geq \rho \exp (-\tau)\}.$$
 Its complement (in $S(\tau)$) is called the {\it singular part} of the slice $S(\tau)$.
\end{definition}

Next, we define, for a Borel subset $B$ of $X^*$ and $\tau>0$

$$\tilde m_{\tau}(B):= \frac 2{1- \exp(-2\tau)} \; m( \bigsqcup_{0 \leq t \leq \tau} \Phi_t (B)).$$

\begin{proposition}\label{regular-part}
The measure $\tilde m_{\tau}$ is independent of $\tau$, and  is equal to the product $d\theta \times  m_{0} $.
\end{proposition}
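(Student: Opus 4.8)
The claim is that $\tilde m_\tau$ does not depend on $\tau$ and equals $d\theta \times m_0$ on $X^* = \bigsqcup_{-\pi/2 < \theta \le \pi/2} R_\theta(X_0^*)$. The plan is to reduce the computation of $m\big(\bigsqcup_{0\le t\le\tau}\Phi_t(B)\big)$ to the measure formula for $Y^*$ from Proposition \ref{measure-on-X}, by conjugating the diagonal flow into the vertical picture. First I would note it suffices to treat a Borel set $B \subset R_\theta(X_0^*)$ for $\theta$ ranging over a small interval, say $|\theta - \theta_1|$ small for a fixed $\theta_1$, and write $B = R_{\theta}(B_0^{(\theta)})$; by a further decomposition (and using that $m$ pushes forward nicely under rotations, since $R_\phi$ acts on $\mathcal C$ preserving $m$) one reduces to understanding the measure of the family $\bigsqcup_{0\le t\le\tau} R_\theta g_t R_{-\theta}(A)$ where $A$ ranges over Borel subsets of $X_0^*$ and $\theta$ ranges over a small interval around $0$, i.e.\ $|\theta| < \pi/4$.

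\textbf{Key steps.} Fix a Borel set $A \subset X_0^*$ and a small interval $I \subset (-\pi/4,\pi/4)$, and consider the set $W(\tau) := \bigsqcup_{\theta \in I}\bigsqcup_{0\le t\le\tau} \Phi_t\big(R_\theta(A)\big) = \bigsqcup_{\theta\in I}\bigsqcup_{0\le t\le\tau} R_\theta g_t R_{-\theta} R_\theta(A) = \bigsqcup_{\theta\in I}\bigsqcup_{0\le t\le\tau} R_\theta g_t (A)$. So $W(\tau)$ is the image of $\{(t,\theta,M) : \theta\in I,\ 0\le t\le\tau,\ M\in A\}$ under $(t,\theta,M)\mapsto R_\theta g_t M$. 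Now $R_\theta g_t = g_t' R_{\theta'} n_{u'}$ for suitable $(t',\theta',u')$ depending smoothly on $(t,\theta)$ — this is just the $W$-decomposition of Proposition \ref{decomp-grn} applied to the element $R_\theta g_t$ (which lies in $W$ for $|\theta|$ small and $t \ge 0$, since it sends $e_2$ to a vector with positive second coordinate whose coordinate product is small). Since $M\in X_0^*$ is fixed by $n_{u'}$ up to staying in $X_0^*$ only infinitesimally — here one must be slightly careful: $n_{u'}$ does not fix points of $X_0^*$, but the point $R_\theta g_t M$ we land on lies in $Y^*$ precisely when $0 < t' < \log\cot|\theta'|$, which holds for $t$ in the relevant range. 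The upshot is that $W(\tau)$ sits inside $Y^*$, and we can compute $m(W(\tau))$ using $m_{|Y^*} = dt' \times \cos 2\theta'\, d\theta' \times m_0$ from Proposition \ref{measure-on-X}, after computing the Jacobian of the change of coordinates $(t,\theta) \leftrightarrow (t',\theta')$ restricted to the slice $u' = 0$ (equivalently, by invariance of Haar measure under $R_\theta$ and the explicit form $\cos 2\theta'\,dt'\,d\theta'\,du'$, the pushforward of $dt\,d\theta$ under $(t,\theta)\mapsto(t',\theta')$ is $\cos 2\theta'\, dt'\, d\theta'$ divided by the appropriate factor — in fact, since left Haar measure is also $dt\,d\theta\,du$ up to scaling in the $g_t R_\theta n_u$-parametrization-by-$e_2$-orbit, the map $(t,\theta)\mapsto (t',\theta')$ has Jacobian making $\cos2\theta'\,dt'\,d\theta'$ correspond to $c\,dt\,d\theta$ for a universal constant $c$). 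Carrying this out, one gets
$$ m\Big(\bigsqcup_{\theta\in I}\bigsqcup_{0\le t\le\tau}\Phi_t(R_\theta A)\Big) = \Big(\int_I d\theta\Big)\cdot \Big(\int_0^\tau e^{-2t}\,\textrm{(something)}\,dt\Big)\cdot m_0(A),$$
and the $t$-integral works out to $\tfrac12(1-e^{-2\tau})$ times a constant, which after the normalization $\frac{2}{1-\exp(-2\tau)}$ in the definition of $\tilde m_\tau$ kills all $\tau$-dependence and leaves exactly $d\theta \times m_0$ (the universal constant being $1$ by tracking normalizations consistently with Corollary \ref{measure-estimate}).

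\textbf{Main obstacle.} The delicate point is the bookkeeping of the coordinate change $R_\theta g_t = g_{t'} R_{\theta'} n_{u'}$ and the precise verification that $R_\theta g_t(M) \in Y^*$ for all $M \in X_0^*$ and $t$ in the slice range, together with getting the normalization constants exactly right so that the answer is literally $d\theta \times m_0$ rather than a constant multiple of it. This requires matching the $e_2$-orbit parametrization used implicitly in $Y^*$ (where the relevant group element is $g_t R_\theta n_u$ acting, and $e_2$ is the vertical/systole direction) against the $\Phi_t = R_\theta g_t R_{-\theta}$ picture (where the systole direction of $M \in R_\theta(X_0^*)$ makes angle $\theta$ with vertical); the two are conjugate by $R_\theta$, and the consistency of Haar-measure normalizations across Propositions \ref{decomp-grn}, \ref{measure-on-X} and Corollary \ref{measure-estimate} is what pins down the constant. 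Once the identification $W(\tau)\subset Y^*$ and the Jacobian computation are in hand, the independence of $\tau$ is immediate from the $\frac{2}{1-e^{-2\tau}}$ normalization, and the claim follows by patching over a locally finite cover of $X^*$.
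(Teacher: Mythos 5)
Your proposal follows essentially the same route as the paper: write $\Phi_t(R_\theta A)=R_\theta g_t(A)$, push $R_\theta g_t$ through the $g_T R_\Theta n_U$ decomposition of Proposition~\ref{decomp-grn}, and integrate against the explicit formula $m|_{Y^*}=dt\times\cos 2\theta\,d\theta\times m_0$ from Proposition~\ref{measure-on-X}. The structure is right, but there are two concrete soft spots that prevent the argument from closing.

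First, the assertion that ``left Haar measure is also $dt\,d\theta\,du$ up to scaling in the $g_t R_\theta n_u$-parametrization'' is false: by Proposition~\ref{decomp-grn} the Haar measure in that chart is $\cos 2\theta\,dt\,d\theta\,du$, which is not a constant multiple of $dt\,d\theta\,du$. In the parametrization $(t,\theta,u)\mapsto R_\theta g_t n_u$ that you are actually using, the Haar density is $e^{-2t}\,dt\,d\theta\,du$ (one can check this directly from bi-invariance: left multiplication by $R_\phi$ translates $\theta$, right multiplication by $n_v$ translates $u$, and right multiplication by $g_s$ sends $(t,u)$ to $(t+s,ue^{2s})$). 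Consequently the Jacobian of $(t,\theta)\mapsto(T,\Theta)$ is not a ``universal constant $c$'' but the $(t,\theta)$-dependent function $e^{-2t}/\cos 2\Theta(t,\theta)$; it is precisely the cancellation of this $\cos 2\Theta$ against the $\cos 2\Theta$ in the Haar density that makes your $\int_0^\tau e^{-2t}\,dt$ come out with coefficient exactly $1$. As written, the unspecified ``(something)'' hides the whole point of the calculation.

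Second, the remark that ``$n_{u'}$ does not fix points of $X_0^*$'' is not yet resolved in your argument: you observe that $W(\tau)\subset Y^*$, which is the weaker statement, but the measure computation needs to track where in $X_0^*$ the shear $n_{U(t,\theta)}$ actually sends the point. The paper handles this by choosing the local chart $\Psi_0(u,M)=n_u M$ from $(-u_0,u_0)\times\Sigma$ onto a neighborhood in $X_0^*$, so that the shear $n_{U(t,\theta)}$ is absorbed as a translation of the $u$-coordinate and the relevant change of variables is the three-dimensional map $(t,\theta,u)\mapsto(T(t,\theta),\Theta(t,\theta),U(t,\theta)+u)$. Working on ``the slice $u'=0$'' does not give a well-defined two-dimensional change of coordinates inside $Y^*$ with $M$ fixed.

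The paper also takes a simpler route to avoid needing an exact formula at all: it first observes that $\tilde m_\tau$ is rotation-invariant, hence by Proposition~\ref{cond-meas} it factors as $d\theta\times m_\tau$ for some $m_\tau$ on $X_0^*$, and then only a leading-order Taylor expansion near $\theta=0$ (using $T=t+O(\theta)$, $U=O(\theta)$, $\Theta=e^{-2t}\theta+O(\theta^2)$) is needed to identify $m_\tau=m_0$. Your approach aims at the stronger exact identity and would be correct once the Jacobian identity $J=e^{-2t}/\cos 2\Theta$ and the $(u,M)$-chart are supplied; as it stands, it contains an incorrect claim about the Haar measure and leaves the determining computation as an unverified ``(something)''.
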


\begin{proof}
As  the measure $\tilde m_{\tau}$ on $X^*$  is invariant under the action on $X^*$ of the group of rotations, 
it follows from Proposition \ref {cond-meas} that the measure $\tilde m_{\tau}$ can be written as   
$ d\theta  \times    m_{\tau} $, for some finite measure $  m_{\tau}$ on $X_0^*$ . 
We will show that  $  m_{\tau}=   m_{0}$.

\smallskip

Let $\Sigma$ and $u_0$ be as in the proof of Proposition \ref {measure-on-X}. 
For $(u,M) \in   (-u_0,u_0) \times \Sigma$, any $\theta$ and any $t \geq 0$,  we have
$$ \Phi_t( R_{\theta} n_u.M) =  R_{\theta} g_t n_u.M.$$

Observe that for $t \geq 0, \, \theta \in  (-\frac{\pi}4, \frac{\pi}4) $, the element  $R_{\theta} g_t $
 belongs to the set $W$ of subsection \ref{gRn}, 
so that we can write, according to Proposition \ref {decomp-grn}
$$ R_{\theta} g_t = g_{T(t,\theta)}  R_{\Theta(t,\theta)} n_{U(t,\theta)} $$
for some smooth functions $T,\Theta,U$. For $\theta$ close to $0$, we have
$$ T(t, \theta) =  t + O(\theta), \quad U(t,\theta) = O( \theta),   \quad \Theta(t,\theta) = e^{-2t} \theta + O(\theta^2) .$$

We will use this local information on $T(t,\theta)$, $\Theta(t,\theta)$ and $U(t,\theta)$ combined with our expression for the measure $m|_{Y^*}$ in $g_T R_{\Theta} n_{U}$-coordinates (cf. Proposition \ref{decomp-grn} and the proof of Proposition \ref{measure-on-X}) to compute $\tilde m_{\tau}$ as follows.

Let $B_0 = (u_1,u_2) \times B$ be an elementary Borel subset of $(-u_0,u_0) \times \Sigma \subset X_0^*$, 
where $B$ is a Borel subset of $\Sigma$. For small $\theta >0$, we have, 
in view of the formula for Haar measure in   Proposition \ref {decomp-grn}

\begin{eqnarray*}
\tilde m_{\tau} ( [0, \theta]  \times B_0) & =&  \frac 2{1- \exp(-2\tau)} \int_{B} \int_{u_1}^{u_2} \int_0^{\tau}   \,e^{-2t} \, \theta \, dt\, du \, d\nu   + O(\theta^2)\\
& = &  m_0(B_0)\,  \theta + O(\theta^2).
\end{eqnarray*} 

This proves that $  m_{\tau}=   m_{0}$.
\end{proof}

\subsection{Estimate for the  singular part of the slice} 

 Let $Z(\tau)$ be the subset of  $S(\tau)$ consisting of surfaces $M$ having a saddle-connection of length 
 $\leq \rho \exp \tau$ which is not   parallel to a minimizing one. 
  Clearly the singular part of the slice $S(\tau)$  is contained in $Z(\tau)$.

\begin{proposition} \label{remaining-part}
For small $\tau>0$ , the $m$-measure of $Z(\tau)$ (hence also the $m$-measure of the singular part of the slice  $S(\tau)$) is $o(\tau)$.
\end{proposition}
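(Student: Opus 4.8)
The strategy is to cover $Z(\tau)$ by sets on which we can apply the ``orbit by orbit'' estimates of Section \ref{s.SL} together with the disintegration formula of Corollary \ref{measure-estimate}, and to see that each such piece contributes only $o(\tau)$. First I would fix a surface $M$ in the slice $S(\tau)$ that belongs to $Z(\tau)$. By definition $M$ has a minimizing saddle-connection of length in $[\rho e^{-\tau}, \rho]$ and another saddle-connection of length $\leq \rho e^{\tau}$ that is not parallel to it; call their holonomy vectors $v$ and $v'$. Rescaling by a suitable $g_s R_\omega$ (with $s = O(\tau)$ and $\omega$ chosen so that the minimizing direction becomes vertical) we may write $M = g_t R_\theta M_0$ with $M_0 \in X_0^*$, $t$ small, $|\sin 2\theta|$ small, where the ``competitor'' saddle-connection of $M_0$ has holonomy $w$ with $\tfrac13 \leq \|w\| \leq 3$ and $w \neq \pm(\text{vertical vector of length} \approx \rho)$.

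\textbf{Key steps.} The plan is: (1) Using Lemma \ref{l.EM}, observe that any $M_0 \in X_0^*$ has only finitely many saddle-connections of norm $\leq 6\rho$, with a bound uniform in $M_0$, and that the vertical ones all have length $> \rho$ except for the minimizing vertical ones which have length exactly $\rho$; moreover the set of $M_0 \in X_0^*$ for which some non-vertical saddle-connection has holonomy $w$ with $\|w \pm \rho e_2\|$ (or rather $\|w/\|w\| \pm e_2\|$, after suitable normalization) smaller than a fixed $\delta_0 > 0$ has small $m_0$-measure — this ``thick-angle'' reduction is precisely Lemma \ref{thick-angle} referred to in the excerpt, so I would invoke it to restrict to $M_0$ in a subset $X_0^*(\delta_0)$ with $m_0(X_0^* \setminus X_0^*(\delta_0))$ as small as we wish. (2) For such $M_0$, apply Proposition \ref{changing-norms}: with $v$ the vertical vector of length $\rho$ and $v'$ the holonomy of the competitor saddle-connection, the set of $\gamma = g_t R_\theta n_u$ (equivalently, the set of $(t,\theta)$, after a bounded factor in $u$) with $\|\gamma\| \leq 2$, $e^{-\tau} \leq \|\gamma v\|/\rho \leq e^\tau$ and likewise for $v'$, has Haar measure $O(\tau^{3/2})$, uniformly over $v'$ with $\|v \pm v'\|$ bounded away from $0$. (3) Sum over the finitely many choices of competitor holonomy $v'$ (the count is uniformly bounded by step (1)), integrate over $M_0 \in X_0^*(\delta_0)$ against $m_0$ using the disintegration $m|_{Y^*} = dt \times \cos 2\theta\, d\theta \times m_0$ of Proposition \ref{measure-on-X}, and conclude that the part of $Z(\tau)$ coming from $X_0^*(\delta_0)$ has $m$-measure $O(\tau^{3/2}) = o(\tau)$; the part coming from $X_0^* \setminus X_0^*(\delta_0)$ is controlled by $m_0(X_0^* \setminus X_0^*(\delta_0)) \cdot O(\tau)$ (using Corollary \ref{measure-estimate} with $\omega_0 = \pi/2$ to bound the full slice coming from a given $m_0$-set), which is $\varepsilon \cdot O(\tau)$ and hence, letting $\delta_0 \to 0$ after $\tau \to 0$, also $o(\tau)$.

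\textbf{Main obstacle.} The routine part is the bookkeeping that lets one write $M \in Z(\tau) \cap S(\tau)$ in the normal form $g_t R_\theta M_0$ and keep track of the $O(\tau)$ losses in the rescaling. The genuine difficulty is step (1): one must rule out that the competitor saddle-connection of $M_0$ is \emph{nearly} parallel (or anti-parallel) to the vertical minimizing one on a set of positive $m_0$-measure — i.e., establish the ``thick-angle'' estimate that the $m_0$-measure of surfaces in $X_0^*$ possessing a non-vertical saddle-connection of bounded length making an angle $< \delta_0$ with the vertical tends to $0$ as $\delta_0 \to 0$. This is where Lemma \ref{l.EM} on the counting of saddle-connections enters, and it is the step that forces the uniformity needed to apply Proposition \ref{changing-norms}; without it the implied constants in the $O(\tau^{3/2})$ bound would blow up. Once that reduction is in place, the $o(\tau)$ conclusion for $Z(\tau)$ follows by assembling steps (2)--(3).
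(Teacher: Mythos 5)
Your plan differs materially from the paper's proof, and it rests on a parameterization of $Z(\tau)$ that does not exist. Your opening step claims that any $M \in Z(\tau)$ can be written as $M = g_t R_\theta M_0$ with $M_0 \in X_0^*$ and $t$ small, by flowing back along the minimizing direction; and your steps (2)--(3) then estimate $m(Z(\tau))$ by integrating against $m_0$ via the disintegration of Proposition \ref{measure-on-X}. But the flow-back map $M \mapsto g_{-s}R_{-\omega}M$ (with $\omega$ the minimizing direction and $s = \log(\rho/\mathrm{sys}(M))$) lands in $X_0^*$ only when all \emph{other} short saddle-connections grow to length $> \rho$ under $g_{-s}$. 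For $M \in Z(\tau)$ the competitor saddle-connection has length $\leq \rho e^{\tau}$ and can perfectly well \emph{shrink} under $g_{-s}$ (e.g.\ if it is horizontal-ish), so the image surface has a non-vertical saddle-connection of length $\leq \rho$ and thus fails the defining condition of $X_0^*$ — it may not even have systole $\rho$. This is precisely what distinguishes the singular part of the slice $S(\tau)$ from the regular part, and the singular part is the bulk of what you need to control. So the set of $M \in Z(\tau)$ that you propose to parameterize by $X_0^*$ excludes exactly the hard cases, and the contribution "$m_0(X_0^* \setminus X_0^*(\delta_0)) \cdot O(\tau)$" you write in step (3) does not account for the singular part at all, since the singular part is invisible to $m_0$.

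The paper's proof avoids this trap by never trying to push $Z(\tau)$ down onto $X_0^*$. It splits $Z(\tau)$ according to the minimal non-zero angle $\hat\theta(M)$ between short saddle-connections of $M$ itself, and treats the two pieces by entirely different mechanisms: for $\hat\theta(M) \geq \hat\theta_0$ it applies Lemma \ref{basic} (Fubini over a ball in $SL(2,\Rset)$) directly to $m$ and the set $Z(\tau)$, combined with Proposition \ref{changing-norms} and Lemma \ref{l.EM}, to get $O(\tau^{3/2})$ — no reference to $X_0^*$ or $m_0$; for $\hat\theta(M) < \hat\theta_0$ it pushes $S_1(\tau)$ by the disjoint family $g_{3j\tau}$, $0 \leq j < \tfrac{\log 3}{6\tau} - 1$, so that the $\gtrsim 1/\tau$ disjoint images all land in a fixed set of small $m$-measure, giving the bound $< \eta\tau$. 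Your continuity-of-measure argument on $m_0(X_0^* \setminus X_0^*(\delta_0))$ is a correct observation, but it is not a substitute for the paper's Lemma \ref{thin-angle}, which is a statement about $m$ on the slice $S(\tau)$ and needs the iterated-push trick precisely because the small-angle set includes the singular part. (Incidentally, your labels are reversed: what you call the "thick-angle reduction" is the paper's \emph{thin-angle} Lemma \ref{thin-angle}, and the $O(\tau^{3/2})$ bound is Lemma \ref{thick-angle}.) To repair your proposal you would have to replace the disintegration over $X_0^*$ in steps (2)--(3) by the global Fubini argument of Lemma \ref{basic}, and replace the $m_0$-continuity reduction by an argument that controls the small-angle locus inside $S(\tau)$ directly.
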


From Propositions \ref{regular-part} and \ref{remaining-part}, we get the

\begin{corollary}\label{derivative-of-F}
One has 
$$ \lim_{\tau \rightarrow 0} \frac 1{\tau} m(\{\rho \geq {\rm sys}(M) \geq \rho \exp (-\tau)\}) = \pi  m_0 (X_0^*). $$
\end{corollary}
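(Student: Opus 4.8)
\textbf{Proof plan for Corollary \ref{derivative-of-F}.}

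The plan is to compute $F(\rho) - F(\rho e^{-\tau}) = m(S(\tau))$ by splitting the slice $S(\tau)$ into its regular and singular parts, and to extract the leading term in $\tau$ from each. First I would handle the regular part $\bigsqcup_{0\le t\le\tau}\Phi_t(X^*)$. By Proposition \ref{regular-part}, the associated measure $\tilde m_\tau$ equals the $\tau$-independent product $d\theta\times m_0$; unwinding the definition of $\tilde m_\tau$, this gives
\begin{equation*}
m\Bigl(\bigsqcup_{0\le t\le\tau}\Phi_t(X^*)\Bigr) = \frac{1-\exp(-2\tau)}{2}\,(d\theta\times m_0)(X^*) = \frac{1-\exp(-2\tau)}{2}\cdot \pi\, m_0(X_0^*),
\end{equation*}
using that $X^* = \bigsqcup_{-\pi/2<\theta\le\pi/2}R_\theta(X_0^*)$ has total $d\theta$-mass $\pi$ in the $\theta$-variable. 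Dividing by $\tau$ and letting $\tau\to 0$, the factor $\tfrac{1-\exp(-2\tau)}{2\tau}\to 1$, so the regular part contributes exactly $\pi\, m_0(X_0^*)$ in the limit.

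Next I would dispatch the singular part: by Proposition \ref{remaining-part}, its $m$-measure is $o(\tau)$, hence contributes $0$ to the limit $\lim_{\tau\to 0}\tfrac1\tau m(S(\tau))$. Adding the two contributions yields
\begin{equation*}
\lim_{\tau\to 0}\frac1\tau\, m(\{\rho\ge {\rm sys}(M)\ge \rho\exp(-\tau)\}) = \pi\, m_0(X_0^*),
\end{equation*}
which is the assertion. One small point to check carefully is that the regular and singular parts genuinely partition $S(\tau)$ (this is exactly the definition given: the singular part is the complement of the regular part in $S(\tau)$), so no overlap correction is needed, and that $S(\tau)$ is $m$-measurable with $m(S(\tau)) = F(\rho) - F(\rho e^{-\tau})$ since the level set $\{{\rm sys}(M)=\rho e^{-\tau}\}$ has $m$-measure $0$ by the remark preceding this section.

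I do not expect a serious obstacle here: the corollary is essentially bookkeeping once Propositions \ref{regular-part} and \ref{remaining-part} are in hand. The only mild subtlety is the constant-chasing — confirming that the $\theta$-integral of the constant density over $(-\pi/2,\pi/2]$ is $\pi$ and that the normalization $\tfrac{2}{1-\exp(-2\tau)}$ in the definition of $\tilde m_\tau$ cancels correctly against $\tfrac{1-\exp(-2\tau)}{2}$ to leave the clean limit $\pi\, m_0(X_0^*)$. Everything else is immediate from the cited propositions.
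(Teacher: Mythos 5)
Your proof is correct and follows exactly the approach the paper intends: the paper's ``proof'' consists solely of the phrase ``From Propositions \ref{regular-part} and \ref{remaining-part}, we get the [corollary],'' and your unwinding --- splitting $S(\tau)$ into regular and singular parts, computing the regular part via $\tilde m_\tau = d\theta\times m_0$ with total $\theta$-mass $\pi$, cancelling the normalization $\frac{2}{1-\exp(-2\tau)}$, and discarding the $o(\tau)$ singular part --- is precisely the intended bookkeeping. The constant-chasing (the factor $\frac{1-\exp(-2\tau)}{2\tau}\to 1$ and the $\pi$ from integrating $d\theta$ over $(-\pi/2,\pi/2]$) is handled correctly.
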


\bigskip

\begin{proof}[Proof of Proposition \ref{remaining-part}]
For $M \in S(\tau)$, let us denote by $\hat \theta (M)$ the smallest non-zero angle between
 two saddle-connections with length $\leq 3 \rho$ (If all connections with length
$\leq 3 \rho$  are parallel, we define  $\hat \theta (M) = \frac {\pi}2$). 
The required estimate for the measure of $Z(\tau)$ follows from two lemmas:

\begin{lemma}\label{thin-angle}
For any $\eta >0$, there exists $\hat \theta_0$ such that, for any $\tau >0$ small enough, one has
$$ m(\{ M \in S(\tau) : \; \hat \theta(M) < \hat \theta_0 \} ) < \eta \tau.$$ 
\end{lemma}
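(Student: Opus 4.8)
\textbf{Plan for the proof of Lemma \ref{thin-angle}.}

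The plan is to reduce the estimate to the Siegel--Veech-type bound recalled in Remark \ref{Siegel-Veech} together with Lemma \ref{l.EM}. Fix $\eta>0$. The set $\{M\in S(\tau):\hat\theta(M)<\hat\theta_0\}$ consists, by definition of $\hat\theta$, of surfaces $M$ carrying two saddle-connections $\delta_1,\delta_2$ of length $\leq 3\rho$ making an angle in $(0,\hat\theta_0)$. The first step is to apply a single shear $n_u$ with $|u|$ controlled (depending only on $\rho$ and $\hat\theta_0$) which brings the shorter of these two connections, say $\delta_1$, into the vertical direction; after this operation the second connection $\delta_2$ still has length $O(\rho)$ but now makes a \emph{small} angle with the vertical, so one of its holonomy coordinates is $O(\rho\,\hat\theta_0)$ while the systole has barely changed. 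Thus, up to pushing by a fixed-size piece of $SL(2,\Rset)$ (so that the measure changes only by a bounded factor, via Lemma \ref{basic}), $\{M\in S(\tau):\hat\theta(M)<\hat\theta_0\}$ is contained in a set of surfaces having a saddle-connection whose holonomy lies in a thin box of dimensions $O(\rho\,\hat\theta_0)\times O(\rho)$ — more precisely, in a sector of angular width $O(\hat\theta_0)$ and radius $O(\rho)$ around the vertical.

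The second step is to estimate the $m$-measure of this ``thin sector'' set. Here I would invoke the Siegel--Veech formula in the form of Theorem 2.2 of \cite{EM} / Lemma 9.1 of \cite{EKZ}: for any fixed $R$, the function $M\mapsto \#\{\text{saddle-connections of }M\text{ with holonomy in }\mathcal R\}$ is integrable, with integral equal to a Siegel--Veech constant times the Lebesgue area of $\mathcal R$, uniformly over reasonable regions $\mathcal R\subset\Rset^2$. Applying this to the thin sector $\mathcal R$ of area $O(\rho^2\hat\theta_0)$ shows that the expected number of saddle-connections landing in $\mathcal R$, hence a fortiori the $m$-measure of the set of $M$ having at least one such connection, is $O(\rho^2\hat\theta_0)$, with an implied constant depending only on the stratum and on $\rho$. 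Choosing $\hat\theta_0$ small enough (in terms of $\eta$ and of the fixed $\rho$) makes this bound smaller than, say, $\eta\rho^2/2$.

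The third step is to gain the extra factor $\tau$: so far the bound $O(\rho^2\hat\theta_0)$ is independent of $\tau$, whereas we need $o(\tau)$. The point is that we have not yet used the constraint ${\rm sys}(M)\geq\rho\exp(-\tau)$ together with ${\rm sys}(M)\leq\rho$, i.e. that $M$ lies in the thin slice $S(\tau)$. After the normalization of the first step the systole is within a factor $e^{\pm O(\hat\theta_0)}$ of the length of the now-vertical connection $\delta_1$, so $\delta_1$ itself has length in an interval of multiplicative width $O(\tau)+O(\hat\theta_0)$; running the Siegel--Veech integration in polar-type coordinates and integrating the radial variable of $\delta_1$ only over this short interval yields an extra factor $O(\tau)$ (for $\hat\theta_0$ already fixed and $\tau\to 0$ this is the dominant small parameter). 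Combining, $m(\{M\in S(\tau):\hat\theta(M)<\hat\theta_0\})\leq C(\rho)\,\hat\theta_0\,\tau$ for all small $\tau$, and with $\hat\theta_0$ chosen so that $C(\rho)\hat\theta_0<\eta$ we are done.

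\textbf{Main obstacle.} The delicate point is bookkeeping the two small parameters simultaneously and making the Siegel--Veech input genuinely uniform: one must integrate the counting function over the set of $M$ with a saddle-connection in a region that itself depends on $M$ (through which connection got verticalized), so a clean way to organize this is to first cover $\{M\in S(\tau):\hat\theta(M)<\hat\theta_0\}$ by finitely many (a number depending on $\rho,\hat\theta_0$ only, via Lemma \ref{l.EM}) pieces on each of which a definite pair of connections realizes the small angle, and on each piece apply Lemma \ref{basic} with a fixed compact piece of $SL(2,\Rset)$ followed by the Siegel--Veech estimate for a fixed thin region. Getting the constants in Lemma \ref{l.EM} and in the Siegel--Veech formula to be uniform over the (compact) range of $\rho$, $R\asymp\rho$ under consideration is exactly where the cited ``fundamental facts'' of Subsection \ref{facts} do the work.
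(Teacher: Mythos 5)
Your proposal takes a genuinely different route from the paper, and I think it has a real gap precisely at the point where the factor $\tau$ is supposed to appear.

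\textbf{Where the paper gets the factor $\tau$.} The paper's proof never invokes the quantitative Siegel--Veech input that you rely on. It first passes (via Lemma~\ref{basic} applied with $G=SO(2,\Rset)$) to the subset $S_1(\tau)\subset S(\tau)$ on which a \emph{minimizing} saddle-connection is within angle $\frac{\pi}{6}$ of vertical, at the cost of a factor $3$. The $\tau$ factor then comes from a dynamical pumping argument: for $0\leq j< \frac{\log 3}{6\tau}-1$ the sets $g_{3j\tau}\bigl(S_1(\tau)\cap\mathcal S\bigr)$ are pairwise disjoint (the systole drops below $\rho e^{-\tau}$ after one step, and stays in a controlled range), and they all land in a single fixed region $\mathcal R(\hat\theta_0)$ of surfaces with systole in $(\rho/2,\sqrt3\rho)$ carrying two non-parallel connections of length $\leq 3\sqrt3\rho$ at angle $< A\hat\theta_0$. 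Since $m$ is invariant, each of the $\asymp 1/(6\tau)$ disjoint copies has the same measure, hence $m(S_1(\tau)\cap\mathcal S)\leq 6\tau\, m(\mathcal R(\hat\theta_0))$. Finally, $m(\mathcal R(\hat\theta_0))\to 0$ as $\hat\theta_0\to 0$ by continuity from above (for a fixed surface, the set of non-zero angles between short connections is finite and bounded away from $0$, by Lemma~\ref{l.EM}, so the intersection of the $\mathcal R(\hat\theta_0)$ is empty). No Siegel--Veech mass bound is needed, only the qualitative finiteness from Lemma~\ref{l.EM}.

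\textbf{The gap in your third step.} Your third step asserts that after verticalizing $\delta_1$ its length is pinned to an interval of multiplicative width $O(\tau)+O(\hat\theta_0)$, and concludes that integrating the radial variable over this interval buys a factor $O(\tau)$. But $\hat\theta_0$ is chosen depending only on $\eta$, hence is a fixed constant as $\tau\to 0$; the interval width is therefore $\asymp \hat\theta_0$, \emph{not} $\asymp\tau$, so you only get the bound $O(\rho^2\hat\theta_0)$ from your steps~1--2 again — no extra $\tau$. Your parenthetical ``(for $\hat\theta_0$ already fixed and $\tau\to 0$ this is the dominant small parameter)'' has the roles inverted. Moreover, the connection $\delta_1$ in your construction is merely the shorter of the two realizing $\hat\theta(M)$; it need not be a minimizing saddle-connection at all, so its length is not even a priori tied to ${\rm sys}(M)\in[\rho e^{-\tau},\rho]$. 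To genuinely extract a $\tau$ from a Siegel--Veech-type argument you would need a bound of the form $m(\{M:\,M\text{ has a \emph{pair} of saddle-connections with holonomy in }\mathcal R\subset\Rset^2\times\Rset^2\})\lesssim {\rm Leb}(\mathcal R)$, i.e. a second-moment/pair Siegel--Veech estimate, which is much stronger than the single-configuration $O(\rho^2)$ bound recalled in Remark~\ref{Siegel-Veech} and is not provided by the ``basic facts'' of Subsection~\ref{facts}. Avoiding exactly this input is the point of the paper's $g_{3j\tau}$ pumping trick.

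\textbf{A secondary issue.} In your first step you propose to verticalize $\delta_1$ by a shear $n_u$ with $|u|$ ``depending only on $\rho$ and $\hat\theta_0$''. A shear cannot rotate an arbitrary direction to vertical with bounded parameter (if $\delta_1$ is near horizontal, $|u|$ blows up); you want a rotation $R_{-\theta}$, and then the reduction must be phrased as an $SO(2,\Rset)$ average via Lemma~\ref{basic} (as the paper does), not as ``pushing by a fixed-size piece so the measure changes by a bounded factor'', which is not literally what Lemma~\ref{basic} gives. This is fixable, but the $\tau$-factor gap above is not, at least not with the cited tools.
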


\begin{lemma}\label{thick-angle}
For any $\hat \theta_0$, one has 
$$ m(\{ M \in Z(\tau) : \; \hat \theta(M) \geq  \hat \theta_0 \} ) =  O(\tau^{\frac 32}),$$
where the implied constant depends on $\hat \theta_0$,$\rho$ and $g$.
\end{lemma}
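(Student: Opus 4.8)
The plan is to decompose $Z(\tau) \cap \{\hat\theta(M) \geq \hat\theta_0\}$ according to which pair of (holonomy classes of) saddle-connections realizes the relevant geometry, and then apply Lemma \ref{basic} together with the $SL(2,\Rset)$-orbit estimate of Proposition \ref{changing-norms}. First I would fix a surface $M \in Z(\tau) \cap \{\hat\theta(M) \geq \hat\theta_0\}$. By definition of $Z(\tau)$, there is a minimizing saddle-connection of length in $[\rho e^{-\tau}, \rho]$ with holonomy vector $w_1$, and a saddle-connection of length $\leq \rho e^{\tau}$ with holonomy $w_2$ not parallel to $w_1$; by the $\hat\theta_0$ condition the angle between $w_1$ and $w_2$ is at least $\hat\theta_0$, so $\|w_1 \pm w_2\|$ is bounded away from $0$ (in terms of $\rho$ and $\hat\theta_0$), and in fact $w_1, w_2$ have norms controlled above and below in terms of $\rho$. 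The key point is then to reverse the construction: writing $\rho \exp(-\tau) \leq {\rm sys}(M)$, one wants to say that for a relatively large set of $\gamma \in SL(2,\Rset)$ with $\|\gamma\| \leq 2$, the surface $\gamma.M$ has its systole back up near $\rho$ with \emph{both} $\gamma.w_1$ and $\gamma.w_2$ of norm comparable to $\rho$ — precisely the event governed by $E(v,v',\tau')$ of Proposition \ref{changing-norms} with $v,v'$ proportional to $w_1,w_2$ and $\tau'$ comparable to $\tau$.

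More concretely, I would apply Lemma \ref{basic} with $\nu$ the normalized restriction of Haar measure to a fixed compact neighborhood $\mathcal{K}$ of the identity in $SL(2,\Rset)$ (say $\{\|\gamma\| \leq 2\}$), and estimate, for each $M$ in the set in question,
$$\nu(\{\gamma \in \mathcal{K} : \gamma.M \in Z(\tau) \cap \{\hat\theta \geq \hat\theta_0\}\}) \geq c(\rho,\hat\theta_0) > 0,$$
because one can rotate and dilate $M$ by a bounded amount so that $w_1$ becomes vertical of length exactly $\rho$ while keeping $w_2$ non-parallel with angle $\geq \hat\theta_0$ and bounded length — this is an open condition on $\gamma$ of positive, uniformly bounded, measure. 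Hence
$$m(Z(\tau) \cap \{\hat\theta \geq \hat\theta_0\}) \leq c(\rho,\hat\theta_0)^{-1} \int_{\mathcal{C}} \nu(\{\gamma : \gamma.M \in Z(\tau)\cap\{\hat\theta\geq\hat\theta_0\}\})\, dm(M).$$
Now I would instead run the argument the other way: start from a surface $M'$ with ${\rm sys}(M') \geq \rho/2$ (a set of full measure up to an $O(\rho^2)$ error by Remark \ref{Siegel-Veech}, but here $\rho$ is fixed so this is harmless), enumerate its finitely many saddle-connections of length $\leq 6\rho$ (finitely many, uniformly, by Lemma \ref{l.EM}), and observe that $\gamma.M' \in Z(\tau)\cap\{\hat\theta\geq\hat\theta_0\}$ with $\|\gamma\|\leq 2$ forces $\gamma \in E(w_i, w_j, C\tau)$ for some ordered pair $i\neq j$ among those finitely many holonomy vectors, with $\|w_i \pm w_j\|$ bounded below in terms of $\hat\theta_0$ and $\rho$. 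Summing Proposition \ref{changing-norms} over this bounded number of pairs gives $\nu(\{\gamma : \gamma.M' \in Z(\tau)\cap\{\hat\theta\geq\hat\theta_0\}\}) = O(\tau^{3/2})$, with the implied constant depending only on $\hat\theta_0$, $\rho$, and the combinatorial data (hence on $g$). Plugging into Lemma \ref{basic} yields $m(Z(\tau)\cap\{\hat\theta\geq\hat\theta_0\}) = O(\tau^{3/2})$, as desired.

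The main obstacle is bookkeeping rather than a conceptual difficulty: one must check that the event ``$\gamma.M' \in Z(\tau)$ with angle $\geq\hat\theta_0$, $\|\gamma\|\leq 2$'' genuinely lands inside a finite union of the sets $E(w_i,w_j,C\tau)$ with the norm-separation $\|w_i\pm w_j\|$ uniformly bounded below — this requires using the lower bound ${\rm sys}(M')\geq \rho/2$ to control $\|w_i\|$ from below, the upper cutoff $6\rho$ and Lemma \ref{l.EM} to make the index set finite and uniform, the angle bound $\hat\theta(M')\geq\hat\theta_0$ to separate the relevant pair, and the constraint $\|\gamma\|\leq 2$ to keep the window $[\exp(-C\tau),\exp(C\tau)]$ valid with a constant $C$ independent of everything. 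Once that is in place, the $\tau^{3/2}$ bound is immediate from Proposition \ref{changing-norms}. One should also take care that the pair realizing $Z(\tau)$ for $\gamma.M'$ need not be the pair realizing $\hat\theta(\gamma.M')$; but since both are drawn from the same finite list and both pairs are well-separated in norm, summing over all ordered pairs from the list absorbs this ambiguity.
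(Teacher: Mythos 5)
Your second half --- applying Lemma~\ref{basic} with $\nu$ the normalized restriction of Haar measure to $\{\|\gamma\| \leq 2\}$, observing that $\gamma.M' \in Z(\tau)\cap\{\hat\theta\geq\hat\theta_0\}$ forces $\gamma$ into $E(\rho^{-1}w_i, \rho^{-1}w_j, \tau)$ for some pair of short holonomy vectors $w_i,w_j$ of $M'$ (note the rescaling by $\rho^{-1}$, which the normalization in Proposition~\ref{changing-norms} requires but which you omit), invoking Lemma~\ref{l.EM} to bound the number of such pairs uniformly, and summing the $O_{\hat\theta_0}(\tau^{3/2})$ contributions --- is essentially the paper's proof.

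Two points need correcting. First, the claim $\nu(\{\gamma \in \mathcal K : \gamma.M \in Z(\tau)\cap\{\hat\theta\geq\hat\theta_0\}\}) \geq c(\rho,\hat\theta_0) > 0$ for $M$ already in that set is false: $Z(\tau)$ confines the systole to the thin window $[\rho e^{-\tau},\rho]$, so the set of $\gamma$ keeping $\gamma.M$ there has Haar measure tending to $0$ with $\tau$, and the ``open condition of positive uniformly bounded measure'' you invoke does not hold (in particular, pinning $\gamma.w_1$ to have length exactly $\rho$ is a measure-zero condition). Fortunately this step is also superfluous: Lemma~\ref{basic} is an \emph{equality}, so no lower bound and no $c^{-1}$ factor are needed, and you in effect abandon the idea in the second half. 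Second, dismissing the surfaces $M'$ with ${\rm sys}(M') < \rho/2$ by appealing to Remark~\ref{Siegel-Veech} is not the right justification: an $O(\rho^2)$ error is \emph{not} harmless when the target is $O(\tau^{3/2})$ with $\rho$ fixed and $\tau\to 0$, since $\rho^2$ does not shrink. The correct (and simpler) observation, which the paper uses, is that for $\|\gamma\| \leq 2$ one also has $\|\gamma^{-1}\|\leq 2$, hence $\gamma.M'\in Z(\tau)$ forces ${\rm sys}(M') \geq \frac12\rho e^{-\tau}$; thus the integrand in Lemma~\ref{basic} vanishes identically for $M'$ of smaller systole, and no error term is incurred at all. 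With these two repairs your proposal coincides with the paper's argument.
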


\begin{proof}[Proof of Lemma \ref{thin-angle}] 
Denote by $S_1(\tau)$  the subset of $S(\tau)$ consisting of translation  surfaces for which 
 there exists a  length-minimizing saddle-connection  whose direction form  an angle $\leq \frac {\pi}6$ with the vertical direction. 
 
 \bigskip


It follows from Lemma \ref{basic} (with $G=SO(2,\Rset)$ equipped with the Haar measure) 
that for any $SO(2,\Rset)$-invariant subset 
$\mathcal S \subset S(\tau)$,  we  have
$$ m(\mathcal S) \leq 3\; m ( S_1(\tau) \cap \mathcal S).$$
We will apply this relation with
$$ \mathcal S = \{ M \in S(\tau), \; \hat \theta(M) < \hat \theta_0 \},$$
for some appropriate $\hat \theta_0$.

\smallskip

 For $ M \in S_1(\tau)$, and any positive integer $j $ such that 
 $$ \exp (3j\tau) <  \cot \frac{\pi}6 = \sqrt 3$$
 the systole of $ g_{3j\tau}.M$ is
 $<  \rho \exp (-\tau)$. Therefore the images of $S_1(\tau)$ under the elements $ g_{3j\tau}$, for $0\leq j < \frac  {\log 3}  {6 \tau} -1 $, are disjoint.
 Observe also that for such $M$, $j$, the surface $M':=  g_{3j\tau}.M$ has a systole in $(\frac {\rho}2, \sqrt 3 \rho)$ and has two saddle-connections of length at most 
$3 \sqrt 3 \rho$ with a non-zero angle $ \leq A \hat \theta (M)$, for some absolute constant $A>1$. 

\medskip

Let $\eta >0$. Let $\hat \theta_1 >0$ be small enough in order that the $m$-measure of the set of surfaces 
$M'$ with systole in $(\frac {\rho}2, \sqrt 3 \rho)$, having
two non-parallel saddle-connections of length $\leq 3\sqrt 3 \rho$ and angle
 $< \hat  \theta_1$,  is $< \frac {\eta}{18}$.
Choosing $\hat \theta_0 := A^{-1}\hat  \theta_1$, as the number of values of $j$  with 
$0\leq j < \frac  {\log 3}  {6\tau} -1 $  is $\geq \frac 1{6\tau}$ for $\tau$ small enough, we obtain
$$ m(\{ M \in S(\tau), \; \hat \theta(M) < \hat \theta_0 \} \cap \mathcal S_1) < 6\tau .\frac {\eta}{18}, $$
$$ m(\{ M \in S(\tau), \; \hat \theta(M) < \hat \theta_0 \}) < \eta \tau.$$
\end{proof}


\begin {proof}[Proof of Lemma \ref{thick-angle}] 
We will apply Lemma \ref{basic} with 

$$B: = \{ M \in Z(\tau), \; \hat \theta(M) \geq  \hat \theta_0 \}, $$ 
$G = SL(2,\Rset)$, and $\nu$ the normalized restriction of a Haar measure to
$$K:= \{ \gamma \in SL(2,\Rset), \; || \gamma || \leq 2 \},$$ 
where $|| \;\;||$ is the Euclidean operator norm. We therefore have to estimate the relative measure in $K$ 
of the sets $\{ \gamma \in K, \; \gamma.M \in B \}$.

\smallskip

If $K.M$ does not intersect $B$, this measure is $0$.

\smallskip

Let $M \in \mathcal C,\,  \gamma \in K$such that  $\gamma.M \in B$ . As 
$||\gamma|| = ||\gamma^{-1}|| \leq 2$, we have
$$ \frac 12 \rho \exp(-\tau) \leq {\rm sys} \, (M) \leq 2 \rho.$$

As $B \subset Z(\tau)$, there exist non colinear holonomy vectors $v,v'$ of saddle connections of $M$
such that 
$$ \rho \exp(-\tau) \leq ||\gamma.v|| \leq \rho \exp (\tau) ,  \quad \rho \exp(-\tau) \leq ||\gamma.v'|| \leq \rho \exp (\tau) .$$
This means that $\gamma$ belongs to the set $E\left(\rho^{-1}v, \rho^{-1} v',\tau\right)$ of Proposition 
\ref{changing-norms}.

As $||\gamma|| = ||\gamma^{-1}|| \leq 2$, we must have (for small $\tau$) $||v|| \leq 3 \rho, \,||v'|| \leq 3 \rho$.

Moreover, the angle between the directions of $v, \,v'$ is $\geq A^{-1} \hat \theta_0$ for some appropriate
 absolute constant $A>1$: otherwise, we would have $\hat \theta (\gamma.M) < \hat \theta_0$, 
 in contradiction to $\gamma.M \in B$. This imply that $|| \rho^{-1} (v  \pm v')||$ is bounded from below 
 by a constant $c$ depending only on  $\hat \theta_0$.

\smallskip

Let $v_1, \dots, v_N$ be the holonomy vectors of saddle-connections of $M$ of lengths  $\leq 3\rho$. 

 We have shown that 
$$\{\gamma\in K : \, \gamma.M\in B\}\subset \bigcup E\left(\rho^{-1} v_i, \rho^{-1} v_j,\tau\right)$$ 
where the union is taken over indices $i,j$ such that $|| \rho^{-1} (v_i  \pm v_j)|| \geq c$. In view of Proposition 
\ref{changing-norms}, we obtain

$$\frac{\nu(\{\gamma\in K: \gamma.M\in B\})}{\nu(K)}\leq N^2\cdot O_{\hat\theta_0}(\tau^{3/2})$$

By Lemma \ref{l.EM} in Subsection \ref {facts}, the integer $N$ has an upper bound 
depending only on $\rho$ and $g$. 

The statement in Lemma \ref {thick-angle}  follows from this estimate, plugged into Lemma \ref{basic}. 
\end{proof}

This completes the proof of Proposition \ref{remaining-part}.
\end{proof}

\section{Proof of Theorem \ref{t.AMY}}\label{s.tAMY}

We recall that $F(\rho)$ stands for the  $m$-measure of the set $\{ sys(M) \leq \rho\} $.
 It is a continuous non-decreasing function of $\rho >0 $. By Corollary \ref{derivative-of-F}, 
 at any $\rho >0$ such that $F(\rho)<1$, the function $F$ has a {\bf positive}  left-derivative 
$F'(\rho)$ which is  equal to $\pi \rho^{-1}  m_0(X_0^*)$ (where $X_0^*$, $ m_0$ are as in Proposition \ref{measure-on-X}) .
 
 \medskip
 
 \begin{proposition}\label{Estimate-of-F}
 For any $\rho >0$ with $F(\rho)<1$, and any $T >0$, we have
 $$ F( \rho \exp(-T) ) \geq  \frac 12  \exp(-2T) \rho F'(\rho) .$$
 \end{proposition}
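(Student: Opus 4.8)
The plan is to bound $F(\rho\exp(-T))$ from below by the $m$-measure of a concrete family of surfaces sitting inside the sublevel set $\{\mathrm{sys}(M)\leq\rho\exp(-T)\}$, namely the family $Y(T,\omega_0,X_0^*)$ studied in Corollary \ref{measure-estimate}, and then let $\omega_0\to\tfrac{\pi}{2}$. First I would observe that if $M'=g_tR_\theta M$ with $M\in X_0^*$ and $t\in J(T,\theta)$, then by the very definition of $J(T,\theta)$ the vertical vector $g_tR_\theta e_2$ has norm $<\exp(-T)$; since $M\in X_0^*$ means all minimizing saddle-connections of $M$ are vertical of length $\rho$, the surface $M'$ has a saddle-connection of length $\rho\|g_tR_\theta e_2\|<\rho\exp(-T)$, hence $\mathrm{sys}(M')\leq\rho\exp(-T)$. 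Therefore $Y(T,\omega_0,X_0^*)\subset\{\mathrm{sys}(M)\leq\rho\exp(-T)\}$ for every $\omega_0\in(0,\tfrac{\pi}{2}]$, and consequently
$$ F(\rho\exp(-T))\;\geq\; m\bigl(Y(T,\omega_0,X_0^*)\bigr). $$

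Next I would plug in the exact formula from Corollary \ref{measure-estimate} with $B=X_0^*$, giving
$$ m\bigl(Y(T,\omega_0,X_0^*)\bigr)=\frac14\exp(-2T)\,m_0(X_0^*)\int_{-\omega_0}^{\omega_0}\log\frac{1+\cos\omega}{1-\cos\omega}\,\cos\omega\;d\omega. $$
Letting $\omega_0\uparrow\tfrac{\pi}{2}$, the integrand is nonnegative on $(0,\tfrac{\pi}{2})$ and integrable there (monotone convergence, or Lemma \ref{comp-integral}), so the integral increases to $2\int_0^{\pi/2}\log\frac{1+\cos\omega}{1-\cos\omega}\cos\omega\,d\omega=2\pi$ by Lemma \ref{comp-integral}. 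Hence
$$ F(\rho\exp(-T))\;\geq\;\frac14\exp(-2T)\,m_0(X_0^*)\cdot 2\pi\;=\;\frac{\pi}{2}\exp(-2T)\,m_0(X_0^*). $$
Finally I would substitute the identification $\pi\,m_0(X_0^*)=\rho F'(\rho)$ coming from Corollary \ref{derivative-of-F} (valid since $F(\rho)<1$), which turns the right-hand side into $\tfrac12\exp(-2T)\,\rho F'(\rho)$, as claimed.

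I do not expect a serious obstacle here: the statement is essentially a repackaging of Corollary \ref{measure-estimate} and Lemma \ref{comp-integral}, once one checks the inclusion $Y(T,\omega_0,X_0^*)\subset\{\mathrm{sys}\leq\rho\exp(-T)\}$. The one point needing a little care is that inclusion — specifically that a surface in $X_0^*$ really has \emph{all} its length-$\rho$ saddle-connections vertical, so that after applying $g_tR_\theta$ the resulting vertical saddle-connection is genuinely short; this is exactly the content of the definition of $X_0^*$ in Section \ref{s.conditionals}, so no new argument is required. The limiting step $\omega_0\to\tfrac\pi2$ is harmless because everything in sight is nonnegative.
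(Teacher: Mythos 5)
Your proof is correct and follows essentially the same route as the paper: both rest on the inclusion $Y(T,\omega_0,X_0^*)\subset\{\mathrm{sys}\leq\rho\exp(-T)\}$, the exact measure formula of Corollary \ref{measure-estimate}, Lemma \ref{comp-integral}, and the identity $\pi m_0(X_0^*)=\rho F'(\rho)$ from Corollary \ref{derivative-of-F}. The only cosmetic difference is that you take a limit $\omega_0\uparrow\frac{\pi}{2}$, whereas the paper simply plugs in $\omega_0=\frac{\pi}{2}$ directly (allowed, since Corollary \ref{measure-estimate} is stated for $\omega_0\in(0,\frac{\pi}{2}]$), so the limiting step is unnecessary.
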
 
\begin{proof}
We apply Corollary \ref{measure-estimate} with $B  = X_0^*$, and $\omega_0 = \frac {\pi}2$. 
From this corollary and Lemma \ref{comp-integral}, we obtain 
  \begin{eqnarray*}
m(Y(T,\frac{\pi}2, X_0^*)) &=&  \frac 14  \exp(-2T)  m_0 (X_0^*)  \int_{-\frac{\pi}2}^{\frac{\pi}2}    \log \frac {1+\cos\omega}  {1-\cos\omega} \cos \omega \; d\omega \\
                                             &=&  \frac {\pi}2  \exp(-2T)  m_0 (X_0^*)\\
                                             &=&  \frac 12  \exp(-2T) \rho F'(\rho).
 \end{eqnarray*}

To get the estimate of the proposition, it is now sufficient to recall that, by Proposition \ref{main-estimate},
any surface in  $Y(T,\frac{\pi}2, X_0^*)$ has a systole $\leq  \rho \exp(-T)$.
\end{proof}

\begin{corollary}\label{abs-cont}
The function $F$ is absolutely continuous, and its left-derivative $F'(\rho)$ satisfies
 $$F'(\rho) = O(\rho).$$
Moreover, defining
  $$ c(m) := \frac 12 \sup_ {F(\rho) < 1} \frac {F'(\rho)} {\rho}, $$
one has
$$ \lim_{\rho \rightarrow 0} \frac{F(\rho)}{\rho^2} = c(m) $$
and also
$$ c(m) := \frac12 \limsup_ {\rho \rightarrow 0 } \frac {F'(\rho)} {\rho}.$$
\end{corollary}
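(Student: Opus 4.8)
The statement to prove is Corollary \ref{abs-cont}, which asserts four things about the monotone continuous function $F(\rho) = m(\{\mathrm{sys}(M)\le\rho\})$: (i) $F$ is absolutely continuous; (ii) its left-derivative satisfies $F'(\rho) = O(\rho)$; (iii) with $c(m) := \tfrac12\sup_{F(\rho)<1} F'(\rho)/\rho$ one has $F(\rho)/\rho^2 \to c(m)$ as $\rho\to 0$; and (iv) $c(m) = \tfrac12\limsup_{\rho\to 0} F'(\rho)/\rho$. The engine for everything is Corollary \ref{derivative-of-F}, which gives $F'(\rho) = \pi\rho^{-1} m_0(X_0^*)$ at every $\rho$ with $F(\rho)<1$ — note $m_0 = m_0(\rho)$ and $X_0^* = X_0^*(\rho)$ depend on the chosen level $\rho$ — together with Proposition \ref{Estimate-of-F}, which reads $F(\rho\exp(-T)) \ge \tfrac12\exp(-2T)\rho F'(\rho)$.

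\textbf{Step 1: absolute continuity and the bound $F'(\rho)=O(\rho)$.} First I would observe that on the (relatively closed) set $\{\rho : F(\rho)<1\}$, which is an interval $(0,\rho_\infty)$ for some $\rho_\infty \in (0,+\infty]$, the one-sided derivative $F'$ exists everywhere and the function $g(\rho) := F'(\rho)/\rho$ is well-defined and positive. The key point is that $g$ is \emph{non-increasing}: applying Proposition \ref{Estimate-of-F} at a level $\rho$ and writing $\rho' = \rho e^{-T} < \rho$, we get $F(\rho') \ge \tfrac12 (\rho'/\rho)^2 \rho F'(\rho) = \tfrac12 \rho' (\rho'/\rho) F'(\rho)$; combining this with the Siegel--Veech bound $F(\rho') = O(\rho'^2)$ from Remark \ref{Siegel-Veech} forces $F'(\rho)/\rho$ to be bounded above by a constant independent of $\rho$, giving $F'(\rho) = O(\rho)$. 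To see monotonicity of $g$ more carefully, I would compare the left-derivative at two levels $\rho_1 < \rho_2$ by noting that $F(\rho_2) - F(\rho_1) \ge$ (the regular-part contribution from pushing $X_0^*(\rho_2)$ down), which by Corollary \ref{measure-estimate}/\ref{derivative-of-F} type estimates is controlled below by $\rho_2 F'(\rho_2)$ times a log factor — this is exactly the content already packaged in Proposition \ref{Estimate-of-F}. From $F' = O(\rho)$ bounded a.e.\ plus the fact that $F$ has a one-sided derivative everywhere and is monotone, absolute continuity follows (a monotone function with everywhere-defined, locally bounded one-sided derivative is absolutely continuous, and $F(\rho) = \int_0^\rho F'(s)\,ds$).

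\textbf{Step 2: the limit $F(\rho)/\rho^2 \to c(m)$.} Since $F$ is absolutely continuous, $F(\rho) = \int_0^\rho F'(s)\,ds$. Writing $F'(s) = s\cdot g(s)$ with $g(s) = F'(s)/s$ non-increasing and bounded, we get $F(\rho)/\rho^2 = \rho^{-2}\int_0^\rho s\,g(s)\,ds$. As $g$ is non-increasing and bounded it has a limit $L := \lim_{s\to 0^+} g(s) = \sup_s g(s) = \limsup_{s\to 0}g(s)$ (monotonicity makes all three coincide), and then $\rho^{-2}\int_0^\rho s\,g(s)\,ds \to \tfrac12 L$ by an elementary argument (split the integral at $\delta\rho$ and use that $g$ is squeezed between $g(\delta\rho)$ and $g(\rho)$, both $\to L$). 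Hence $F(\rho)/\rho^2 \to \tfrac12 L = \tfrac12\sup_\rho g(\rho) = c(m)$, and simultaneously $c(m) = \tfrac12\limsup_{\rho\to 0} F'(\rho)/\rho$, which is all four assertions. I would also remark that $c(m) < \infty$ precisely because of Step 1.

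\textbf{Main obstacle.} The genuinely delicate point is establishing that $g(\rho) = F'(\rho)/\rho$ is \emph{monotone} (equivalently, that $\sup$ and $\limsup$ of $F'(\rho)/\rho$ agree, and that the $\sup$ is finite). The finiteness is the Siegel--Veech input via Proposition \ref{Estimate-of-F}; the monotonicity requires relating the measures $m_0(X_0^*)$ at different levels $\rho$, and the clean way to do this is to push the ``deepest'' level's regular part all the way down and compare with $F$ at the target level — i.e.\ to extract monotonicity directly from the inequality in Proposition \ref{Estimate-of-F} rather than re-deriving it. Once monotonicity and boundedness of $g$ are in hand, everything else is routine real analysis of absolutely continuous monotone functions.
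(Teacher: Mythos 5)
There is a genuine gap in Step 1/Step 2: you assert that $g(\rho) := F'(\rho)/\rho$ is non-increasing and then let everything else flow from that, but this monotonicity is neither established by your argument nor actually available. Proposition \ref{Estimate-of-F} gives, for $\rho' = \rho e^{-T} < \rho$, the inequality $F(\rho')/\rho'^2 \geq \tfrac12\, F'(\rho)/\rho$; this compares $F$ evaluated at a \emph{smaller} level with the left-derivative at a \emph{larger} level, and it yields a lower bound on $F(\rho')$, not a comparison $F'(\rho_1)/\rho_1 \geq F'(\rho_2)/\rho_2$. Your heuristic (``push the deepest level's regular part down and compare'') produces two independent lower bounds on $F$ at the target level, one from each source level, not an inequality between the source derivatives. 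In fact the very structure of the statement — the paper defines $c(m)$ via a $\sup$ and then records equality with the $\limsup$ as a separate conclusion to be proved — signals that monotonicity of $g$ is not being assumed or derived. With your (unproved) monotonicity, the $\sup$ and $\limsup$ statements would be trivially identical, collapsing two assertions into one; that should have been a warning sign.

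The paper's route avoids the monotonicity issue entirely, and the fix to your argument is to follow it. From Proposition \ref{Estimate-of-F} with $T \to 0$ (or by continuity of $F$) one gets $F'(\rho) \leq 2\rho^{-1}F(\rho)$, which together with the Siegel--Veech bound $F(\rho) = O(\rho^2)$ gives $F'(\rho) = O(\rho)$, hence $F'$ is bounded; a continuous function with everywhere-bounded left-derivative is Lipschitz, hence absolutely continuous, so $F(\rho) = \int_0^{\rho} F'(s)\,ds$. The lower bound $\liminf_{\rho\to 0} F(\rho)/\rho^2 \geq c(m)$ comes directly from Proposition \ref{Estimate-of-F} applied at an arbitrary level and then taking the supremum; the upper bound $\limsup_{\rho\to 0} F(\rho)/\rho^2 \leq c(m)$ comes from $F'(s) \leq 2 c(m) s$ integrated. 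These two sandwich the limit, with no monotonicity used. Finally, the equality $c(m) = \tfrac12 \limsup_{\rho\to 0} F'(\rho)/\rho$ is a consequence of the limit just proved: if the $\limsup$ were strictly smaller than $2c(m)$, then $F'(\rho) \leq 2(c(m)-\delta)\rho$ for all small $\rho$, forcing $\limsup F(\rho)/\rho^2 \leq c(m)-\delta$, a contradiction. Your remaining steps (the Lipschitz/absolute-continuity lemma, the elementary integral computation) are fine; the error is only in leaning on an unavailable monotonicity rather than sandwiching.
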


\begin{proof}
It follows from Proposition \ref{Estimate-of-F} that we have $F'(\rho) \leq 2 \rho^{-1}F(\rho)$ for $F(\rho)<1$.
Recalling (cf. Remark  \ref{Siegel-Veech}) that $F(\rho) = O(\rho^2)$, we obtain that $F'(\rho) = O(\rho)$.
In particular, $F'$ is bounded.

\smallskip

If $f$ is a continuous function on some interval  $[\rho_0, \rho_1]$, having at each point a left-derivative
bounded by $C$, then one has 
$$ |f(\rho_1) - f(\rho_0)| \leq C(\rho_1 - \rho_0)$$
and thus $f$ is absolutely continuous. Indeed, it is sufficient to prove this inequality for any $C' >C$. Given $C'>C$, let 
$$I(C'):=\{\rho\in[\rho_0,\rho_1]: |f(\rho_1)-f(\rho)|\leq C'(\rho_1-\rho)\}.$$
Note that $\rho_1\in I(C')$ (and hence $I(C')$ is not empty), $I(C')$ is closed (because $f$ is continuous) and $I(C')$ is ``open to the left'' in the sense that, for each $\rho_*\in I(C')$, $\rho_0<\rho_*\leq\rho_1$, there exists $\delta=\delta(\rho_*)>0$ such that $[\rho_*-\delta,\rho_*]\subset I(C')$ (because of the bound $|f'(\rho_*)|\leq C<C'$ on the left-derivative). From these properties, we deduce that $I(C')=[\rho_0,\rho_1]$ and therefore $f(\rho_1) - f(\rho_0)| \leq C(\rho_1 - \rho_0)$.
 
\smallskip

Applying this with $f=F$, we conclude that $F$ is absolutely continuous.

\smallskip

 Defining $c(m)$ as in the corollary, we get from Proposition \ref{Estimate-of-F} that
$$ \liminf_{\rho \rightarrow 0} \frac{F(\rho)}{\rho^2} \geq c(m) .$$

On the other hand, since $F$ is absolutely continuous, it has a derivative at almost every point and it is the integral of its almost everywhere derivative. In particular, in our setting, it follows that $F$ is the integral of its left-derivative $F'$, so that
$$  \limsup_{\rho \rightarrow 0} \frac{F(\rho)}{\rho^2} \leq c(m).$$

This proves that  $ \lim_{\rho \rightarrow 0}  \rho^{-2}F(\rho) = c(m) $. The last assertion of the corollary is then obvious. 
\end{proof}

\bigskip

\begin{proof}[Proof of  Theorem \ref{t.AMY}] 
We have to prove that the $m$-measure of the set of $M \in \mathcal C$ which have at least two non-parallel saddle-connections 
of length $\leq \rho$ is $o(\rho^2)$ when $\rho$ is small. Taking into account Remark \ref{Siegel-Veech}, it is sufficient to prove that, for any $A>1$, the set
 $\mathcal C(A,\rho)$ formed by the points $M \in \mathcal C$ with ${\rm sys}(M) \leq \rho$ having another saddle-connection of length $\leq A \cdot {\rm sys}(M)$, 
 non parallel to the minimizing one, has $m$-measure $o(\rho^2)$ when $\rho$ is small.
 
 \bigskip
 
 Fix some $A >1$, and some $\eta >0$.
 
 
 We will prove that 
 
 \begin{equation}\label{e.A}
  m( \mathcal C(A,\rho )) < \eta \rho^2,
  \end{equation}
 
 when $\rho$ is sufficiently small.
 
 \medskip
 
 We first choose $\rho_0$ sufficiently small to satisfy  $F(\rho_0)<1$ and

 \begin{equation}\label{e.F'}
 \rho_0^{-1} F'(\rho_0) >  2 c(m) - \frac {\eta}2;
 \end{equation}
 \begin{equation} \label{e.F}
 \rho^{-2} F(\rho) < (c(m) + \frac {\eta}4) \quad {\rm  for} \; 0<\rho <\rho_0.
  \end{equation}
 
 We use $\rho_0$ as the level of the systole function at which are defined $X_0^*$, $ m_0$ in  Section
 \ref {s.conditionals}.
 
 \medskip
 
Let $\omega_0  = \omega_0(\eta) >0$ be small enough to have 
\begin{equation}\label{e.omega}
       m_0 (X_0^*)  \int_{-\omega_0}^{\omega_0}    \log \frac {1+\cos\omega}  {1-\cos\omega} \cos \omega \; d\omega < \eta \rho_0^2. 
  \end{equation}      



Next, consider $T>0$ and  $\theta$ such that $  \exp (-2T)  \sin \omega_0  <  |\sin 2\theta| < \exp (-2T) $. By Lemma \ref{l.K(omega)}, there exists 
$K = K(\omega_0)$, {\bf independent of $T$},
such that, for all $t \in J(T, \theta)$, the norm of the image of the vertical basis vector $e_2$ under $g_t R_{\theta}$ is $\leq K \exp (-t)$.

\smallskip

Thus, if $v$ is any vector with $||v|| \geq AK$, we will have, for such $\theta$ and $t$

\begin{equation}\label{e.B}
 ||g_t R_{\theta} v || \geq AK \exp (-t) \geq A   ||g_t R_{\theta} e_2||. 
 \end{equation}

\medskip

Let $M \in X_0^*$. Denote by $\bar \theta(M)$ the minimal  angle between a length-minimizing saddle-connection and another non-parallel saddle connection of length 
$\leq AK \rho_0$.
If no such short non-parallel saddle connection exists, set $\bar \theta(M) = \frac {\pi}2$.

\smallskip

 As $\bar \theta$ is everywhere positive on $X_0^*$, there exists $\bar \theta_0$ such that the set 
 $B(\bar \theta_0):= \{ M \in X_0^*, \; \bar \theta (M) < \bar \theta_0 \}$ satisfies

\begin{equation} \label {e.theta}
 \frac {\pi}2   m_0 (B(\bar \theta_0)  )  < \frac {\eta}4\rho_0^2 .
 \end{equation}

On the other hand, if $T$ is sufficiently large (say $T \geq T_0 = T_0(A, \bar \theta_0)$), one has, for 
$    |\sin 2\theta| < \exp (-2T) $ ,
 $ \frac {\pi}2 > |\theta'| > \bar \theta_0$,  $t  \in J(T,\theta)$
\begin{equation}\label{e.C}
 || g_t R_{\theta + \theta'} e_2 || \geq A || g_t R_{\theta } e_2 || .
 \end{equation}
In fact, recall that $t\in J(T,\theta)$ if and only if $\|g_t R_\theta e_2\|^2<\exp(-2T)$ and $J(T,\theta)\neq\emptyset$ if and only if $|\sin 2\theta|<\exp(-2T)$, cf. Proposition \ref{main-estimate}. In particular, for $T$ large enough (depending on $\bar\theta_0$), we have that $|\theta|\leq\bar\theta_0/2$. It follows that, for any $|\theta'|\geq\bar\theta_0$, one has
$$\|g_t R_{\theta+\theta'} e_2\|^2=e^{2t}\sin^2(\theta+\theta')+e^{-2t}\cos^2(\theta+\theta')\geq e^{2t} \sin^2(\bar\theta_0/2)$$
Now, we observe that the proof of Proposition \ref{main-estimate} also gives that 
$$e^{2t}\geq \exp(-2T)\frac{1-\cos\omega}{2\sin^2\theta}$$
for any $t\in J(T,\theta)$, where $\sin2\theta=\exp(-2T)\sin\omega$. From this discussion, we obtain 
$$\|g_t R_{\theta+\theta'} e_2\|^2\geq \sin^2(\bar\theta_0/2)\exp(-2T)\frac{1-\cos\omega}{2\sin^2\theta}>\sin^2(\bar\theta_0/2)\frac{1-\cos\omega}{2\sin^2\theta}\|g_t R_{\theta} e_2\|^2$$
for $T$ large enough (depending on $\bar\theta_0$) and $t\in J(T,\theta)$. On the other hand, since $\sin2\theta=\exp(-2T)\sin\omega$, if $T$ is large than an absolute constant so that $|\cos\theta|\geq 1/2$, then 
$$2(1-\cos\omega)\geq 1-\cos^2\omega=\sin^2\omega=\exp(4T)\sin^22\theta\geq \exp(4T)\sin^2\theta,$$
that is, $(1-\cos\omega)/\sin^2\theta\geq \exp(4T)/2$. So, by putting these estimates together, we obtain that if $T$ is large enough (depending on $\bar\theta_0$ and $A$) and $|\theta'|\geq\bar\theta_0$, then 
$$\|g_t R_{\theta+\theta'} e_2\|^2\geq \sin^2(\bar\theta_0/2)(\exp(4T)/4)\|g_t R_{\theta} e_2\|^2\geq A^2\|g_t R_{\theta} e_2\|^2,$$
that is, \eqref{e.C} holds.
\bigskip

We claim that \eqref{e.A} holds for $\rho \leq \rho_0 \exp (-T_0)$. Indeed, writing $\rho = \rho_0 \exp (-T)$ with $T \geq T_0$:

\begin{itemize}
\item The measure of elements in $  \{ {\rm sys}(M) \leq \rho \}$ which are {\bf not} in  $Y(T,\frac{\pi}2 ,X_0^*)$ is $< \frac {\eta}2 \rho^2$. Indeed, from (\ref {e.F'}), (\ref {e.F}), Corollary \ref {measure-estimate} and Proposition \ref{Estimate-of-F}, we have
   
$$F(\rho) < \frac 12 (c(m)+ \frac {\eta}2) \rho^2$$
 and 
$$ m(Y(T,\frac{\pi}2 ,X_0^*)) = \frac 12 \exp(-2T) \rho_0 F'(\rho_0) > \frac 12 \rho^2 (c(m)- \frac {\eta}2).$$

\item For $M \in Y(T,\frac{\pi}2 ,X_0^*)$, write $M = g_t R_{\theta} M_0$, with $M_0 \in X_0^*$, $ |\sin 2\theta| < \exp (-2T) $, $t \in  J(T, \theta)$ . 
From (\ref {e.omega}) and Corollary \ref {measure-estimate}, we get 
$$ m(Y(T, \omega_0,X_0^*)) < \frac 14 \eta \rho^2.$$
Similarly, from (\ref {e.theta}) and Corollary \ref {measure-estimate}, we get 
$$ m(Y(T,\frac {\pi}2, B(\bar \theta_0))) < \frac 14 \eta \rho^2.$$
  
 \item  Assume that  $M = g_t R_{\theta} M_0$, with $\bar \theta(M_0) \geq \bar \theta_0$,  $ |\sin 2\theta| \geq \exp (-2T)  \sin \omega_0$. Assume also that $T \geq T_0$. Then the point $M$
 does not belong to  $\mathcal C(A,\rho )$. This follows from \eqref{e.B} for the saddle connections of $M_0$ of length $> AK \rho_0$, and from  \eqref{e.C} for the saddle connections of $M_0$ of length $\leq AK \rho_0$.
\end{itemize}

This concludes the proof of  \eqref{e.A}  and also of the theorem. 
\end{proof}

\end{document}